\title{Hypoelliptic estimates for linear transport operators}
\author{Paul Alphonse}
\address{Paul Alphonse, Univ Rennes, CNRS, IRMAR - UMR 6625, F-35000 Rennes}
\email{paul.alphonse@ens-rennes.fr}
\date{}
\keywords{linear transport operators, hypoelliptic estimates, Kalman rank condition, multiplier method}
\subjclass[2010]{35B65, 35H10}
\numberwithin{equation}{section}
\newtheorem{thm}{Theorem}[section]
\newtheorem{prop}[thm]{Proposition}
\newtheorem{lem}[thm]{Lemma}
\theoremstyle{definition}
\newtheorem{ex}[thm]{Example}
\DeclareMathOperator{\Supp}{Supp}
\DeclareMathOperator{\Reelle}{Re}
\DeclareMathOperator{\Op}{Op}
\DeclareMathOperator{\Tr}{Tr}
\DeclareMathOperator{\Ker}{Ker}
\DeclareMathOperator{\Ran}{Ran}
\DeclareMathOperator{\Rank}{Rank}
\begin{document}

\sloppy

\maketitle

\selectlanguage{english}

\begin{abstract} We aim at understanding how the non-commutation phenomena between a linear transport operator and a fractional diffusion allow the transport operator to satisfy hypoelliptic estimates on the whole space. Such hypoelliptic estimates are obtained for a large class of linear transport operators by using the classical multiplier method. The main motivation of this work arises from the study of the hypoelliptic regularity of the solutions of kinetic equations associated with a free transport operator.
\end{abstract}

\section{Introduction}

This paper is devoted to study the hypoelliptic properties enjoyed by a large class of linear transport operators, time-dependent or autonomous, given by
\begin{equation}\label{05042019E6}
	L = \partial_t + Bx\cdot\nabla_x\quad\text{or}\quad L = Bx\cdot\nabla_x,\quad (t,x)\in\mathbb R\times\mathbb R^n,
\end{equation}
where $B$ is a real $n\times n$ matrix and $Bx\cdot\nabla_x$ stands for the following differential operator
$$Bx\cdot\nabla_x = \sum_{i=1}^n\sum_{j=1}^nB_{i,j}x_j\partial_{x_i},\quad B = (B_{i,j})_{1\le i,j\le n}.$$
Considering $\lambda_0>0$ a positive real number and $Q$ a real symmetric positive semidefinite $n\times n$ matrix, satisfying an algebraic condition with the matrix $B$, we aim at studying how the non-commutation phenomena between the transport operator $L$ and the fractional diffusion $\langle QD_x\rangle^{\lambda_0}$ allow the operator $L$ to satisfy global hypoelliptic estimates on the whole space. As justified just after, the estimates we investigate take the following form
\begin{equation}\label{06052020E1}
	\sum_{j=1}^{r-1}\big\Vert\langle Q(B^T)^jD_x\rangle^{\lambda_j}u\big\Vert_{L^2} + \Vert u\Vert_{H^{\lambda_r}}\lesssim\big\Vert\langle QD_x\rangle^{\lambda_0}u\big\Vert_{L^2} + \sum_{j=0}^{r-1}\big\Vert\langle Q(B^T)^jD_x\rangle^{q_j}Lu\big\Vert_{H^{s_j}},
\end{equation}
with $D_x = i^{-1}\nabla_x$ and where $0\le r\le n-1$ is an integer intrinsically linked to the matrices $B$ and $Q$ and $\lambda_1,\ldots,\lambda_r>0$ are regularity exponents we aim at sharply explicit in terms of $\lambda_0>0$ and the non-negative real numbers $0\le q_0,\ldots, q_{r-1}\le 1$ and $s_0,\ldots,s_{r-1}\geq0$. We refer to the notation paragraph just after where the norms appearing in \eqref{06052020E1} are defined.

The algebraic condition mentioned above is the so-called Kalman rank condition
\begin{equation}\label{10052018E4}
	\Rank\big[B\ \vert\ Q\big] = n,
\end{equation}
where 
$$\big[B\ \vert\ Q\big] = \big[Q,BQ,\ldots,B^{n-1}Q\big],$$
is the $n\times n^2$ matrix obtained by writing consecutively the columns of the matrices $Q, BQ,\ldots,B^{n-1}Q$. This condition appears in many branches of analysis, for example in controllability theory or in the theory of hypoelliptic operators. In this last domain, the Kalman rank condition is known to be one of the equivalent conditions that characterizes the hypoellipticity of the Ornstein-Uhlenbeck operators, which are operators given by the sum of a diffusion and a linear transport operator, see e.g. the introduction of \cite{AB}. This condition also appears in the study of the subelliptic properties enjoyed by a more general class of operators known as the fractional Ornstein-Uhlenbeck operators and defined by
$$P = \vert QD_x\vert^{\lambda_0} + Bx\cdot\nabla_x,\quad x\in\mathbb R^n.$$
The Ornstein-Uhlenbeck operators correspond to the case where $\lambda_0 = 2$. In the work \cite{AB} (Corollary 1.17), by studying the smoothing properties of the semigroup generated by the above operator $P$ and using results from the theory of real interpolation, the author and J. Bernier obtained subelliptic estimates enjoyed by the operator $P$ under the Kalman rank condition \eqref{10052018E4} that directly provide the following hypoelliptic estimates for the autonomous transport operator $L$ under the same setting
\begin{equation}\label{06052020E2}
	\sum_{j=1}^{r-1}\big\Vert\langle Q(B^T)^jD_x\rangle^{\lambda_j}u\big\Vert_{L^2} + \Vert u\Vert_{H^{\lambda_r}}\lesssim\big\Vert\langle QD_x\rangle^{\lambda_0}u\big\Vert_{L^2} + \Vert Lu\Vert_{L^2},
\end{equation}
where $0\le r\le n-1$ is again an integer intrinsically linked to the matrices $B$ and $Q$ (the same as above) and the regularity exponents $\lambda_j>0$ are given by
\begin{equation}\label{06052020E5}
	\lambda_j = \frac{\lambda_0}{1+j\lambda_0}.
\end{equation}
Therefore, when the Kalman rank condition holds, the non-trivial interaction between the operator $L$ and the fractional diffusion $\langle QD_x\rangle^{\lambda_0}$ provides global Sobolev regularity in the all space and also in the directions given by the matrices $Q,QB^T,\ldots,Q(B^T)^{n-1}$. In this paper, we aim at investigating the influence of the possible regularity of the source term $Lu$ in the same specific directions on the regularity of the function $u$. The purpose is to track the dependence of the regularity exponents $\lambda_j>0$ involved in the hypoelliptic estimate \eqref{06052020E1} with respect to $\lambda_0>0$, $0\le q_0,\ldots, q_{r-1}\le 1$ and $s_0,\ldots,s_{r-1}\geq0$.

The main motivation of this work arises from the study of the hypoelliptic regularity of the $L^2_{t,x,v}$ solutions of the following transport equation with a source term $f\in L^2_{t,x,v}$, with $L^2_{t,x,v} = L^2(\mathbb R^{2n+1}_{t,x,v})$, and posed on the whole Euclidean space,
\begin{equation}\label{11042019E1}
	\partial_tu + v\cdot\nabla_xu = f,\quad (t,x,v)\in\mathbb R\times\mathbb R^n\times\mathbb R^n.
\end{equation}
Notice that this is the equation $Lu = f$, where $L$ is the time-dependent transport operator defined in \eqref{05042019E6} when the matrix $B$ is given by 
$$B = \begin{pmatrix}
	0_n & I_n \\
	0_n & 0_n
\end{pmatrix}.$$
Many results on this topic concerning this particular equation were obtained in a series of works by F. Bouchut \cite{MR1949176}, R. Alexandre, Y. Morimoto, S. Ukai, J.-C. Xu and T. Yang \cite{MR2462585}, R. Alexandre \cite{MR2948893} and W.-X. Li, P. Luo, S. Tang in \cite{MR3456819}. One of these results, namely \cite{MR3456819} (Theorem 1.1), provide the following hypoelliptic estimate for the solutions $u\in L^2_{t,x,v}$ of the transport equation \eqref{11042019E1}
\begin{align}
	& \big\Vert\langle D_x\rangle^{\frac p{1-q+p}}u\big\Vert_{L^2_{t,x,v}}\lesssim\big\Vert\langle D_v\rangle^pu\big\Vert_{L^2_{t,x,v}} + \big\Vert\mathcal\langle D_v\rangle^qf\big\Vert_{L^2_{t,x,v}}, \label{15072019E2} 
\end{align}
where $p\geq0$ and $0\le q\le 1$ are non-negative real numbers. Let us also mention that many results in the litterature known as {\og}Avering Lemmas{\fg} deal with the study of the hypoelliptic regularity not of the solutions $u$ of the kinetic equation \eqref{11042019E1}, but of their averages
$$\rho_{\phi}(t,x) = \int_{\mathbb R^n}u(t,x,v)\phi(v)\ \mathrm dv,\quad\phi\in C^{\infty}_c(\mathbb R^n).$$
Such a study will not be performed in this work and we refer to the recent work \cite{JLT} where a large state of art concerning this domain is made.
Obtaining hypoelliptic estimates like \eqref{06052020E1} for a large class of transport operators would allow to understand the global phenomena of transmission of regularity illustrated in the particular case \eqref{15072019E2}, where regularity in the space variable $x\in\mathbb R^n$ is provided from regularity in the velocity variable $v\in\mathbb R^n$. It would also provide a better understanding of the explicit gain of regularity stated in the estimate \eqref{15072019E2}.

In order to obtain hypoelliptic estimates like \eqref{06052020E1}, we will use a multiplier method inspired by the work \cite{MR2752935} of K. Pravda-Starov in which the author used this kind of method to obtain sharp subelliptic estimates enjoyed by a large class of accretive quadratic differential operators. The very same method was also used by the same author in the work \cite{MR2993058} to obtain similar estimates for systems of quadratic differential operators. More generally, the multiplier method was widely used in the literature to obtain coercive and subelliptic estimates for operators arising from the theory of kinetic equations or (hypoelliptic) estimates for the solutions of such equations including for examples transport equations with external potentials or various models of Boltzmann equations, see \cite{CHLZ, MR2786222, MR3348825, MR3193940, MR3456819}. The construction implemented in this work is quite technical in the general case, but is presented in Section \ref{particular} for a particular operator in order to identify its main steps.

\subsubsection*{Outline of the work} In Section \ref{results}, we state the main results contained in this work. Section \ref{particular} is devoted to present the main steps of the multiplier method applied to a particular transport operator. The general construction is performed in Section \ref{constructionKalman} and the derivation of the hypoelliptic estimates once this construction is achieved is made in Section \ref{hypoelliptic}. Section \ref{appendix} is an appendix containing the proofs of some technical results. 

\subsubsection*{Notations} The following notations and conventions will be used all over the work:
\begin{enumerate}[label=\textbf{\arabic*.},leftmargin=* ,parsep=2pt,itemsep=0pt,topsep=2pt]
\item The canonical Euclidean scalar product of $\mathbb R^n$ is denoted by $\cdot$ and $\vert\cdot\vert$ stands for the associated canonical Euclidean norm.
\item The Japanese bracket $\langle\cdot\rangle$ is defined for all $\xi\in\mathbb R^n$ by $\langle\xi\rangle = \sqrt{1+\vert\xi\vert^2}$.
\item The inner product of $L^2(\mathbb R^n)$ is defined for all $u,v\in L^2(\mathbb R^n)$ by
$$\langle u,v\rangle_{L^2(\mathbb R^n)} = \int_{\mathbb R^n}u(x)\overline{v(x)}\ \mathrm dx.$$
\item In all the estimates appearing in this work, the $L^2$ scalar product $\langle\cdot,\cdot\rangle_{L^2}$ denotes the scalar product of the space $L^2(\mathbb R^{n+1}_{t,x})$ when the operator at play is $L = \partial_t + Bx\cdot\nabla_x$ and stands for the scalar product of the space $L^2(\mathbb R^n_x)$ in the case where $L = Bx\cdot\nabla_x$. The situation is similar with the associated norm $\Vert\cdot\Vert_{L^2}$, and more generally with any Sobolev norm $\Vert\cdot\Vert_{H^s}$, with $s\geq0$.
\item For all function $u\in\mathscr S(\mathbb R^n)$, the Fourier transform of $u$ is denoted $\widehat u$ and defined by
$$\widehat u(\xi) = \int_{\mathbb R^n}e^{-2i\pi x\cdot\xi}u(x)\ \mathrm dx.$$
With this convention, Plancherel's theorem states that 
$$\forall u\in\mathscr S(\mathbb R^n),\quad \Vert\widehat u\Vert_{L^2(\mathbb R^n)} = \Vert u\Vert_{L^2(\mathbb R^n)}.$$
\item In all this work, the vector space $\mathscr S$ denotes the Schwartz space $\mathscr S(\mathbb R^{n+1}_{t,x})$ when the operator at play is $L = \partial_t + Bx\cdot\nabla_x$ and stands for the Schwartz space $\mathscr S(\mathbb R^n_x)$ in the case where $L = Bx\cdot\nabla_x$.
\item For all continuous function $F:\mathbb R^n\rightarrow\mathbb C$, the notation $F(D_x)$ is used to denote the Fourier multiplier associated with the symbol $F(\xi)$.
\item Given $f,g:\mathbb R^n\rightarrow\mathbb R$ two functions, the notation $f\lesssim g$ means that there exists a positive constant $c>0$ such that $f\le cg$, the constant $c>0$ not depending on exterior parameters. Given moreover $\varepsilon>0$, the notation $f\lesssim_{\varepsilon} g$ means that there exists a positive constant $c_{\varepsilon}>0$ depending on $\varepsilon>0$ such that $f\le c_{\varepsilon}g$.
\end{enumerate}

\section{Mains results}\label{results}

This section is devoted to present the main results contained in this paper. As presented in the introduction, when the Kalman rank condition \eqref{10052018E4} holds, we aim at obtaining hypoelliptic estimates of the form \eqref{06052020E1} for the transport operators $L$ defined in \eqref{05042019E6}. Notice from Lemma \ref{29082018E1} that the matrices $B$ and $Q$ satisfy the Kalman rank condition if and only if there exists a non-negative integer $0\le r\le n-1$ satisfying
\begin{equation}\label{10042019E2}
	\Ker(Q)\cap\Ker(QB^T)\cap\ldots\cap\Ker(Q(B^T)^r) = \{0\}.
\end{equation}
The smallest integer $0\le r\le n-1$ satisfying \eqref{10042019E2} is the one that appears in the estimate \eqref{06052020E2}. It will also play a key role in the futur results of this section. 

In the general case, we will prove a weaker estimate than the one stated in \eqref{06052020E1} and obtain a global hypoelliptic estimate of the form 
\begin{equation}\label{06052020E3}
	\Vert u\Vert_{H^{\lambda_r}}\lesssim\big\Vert\langle QD_x\rangle^{\lambda_0}u\big\Vert_{L^2} + \sum_{j=0}^{r-1}\big\Vert\langle Q(B^T)^jD_x\rangle^{q_j}Lu\big\Vert_{H^{s_j}},
\end{equation}
again with $\lambda_0>0$, $0\le q_0,\ldots, q_{r-1}\le 1$ and $s_0,\ldots,s_{r-1}\geq0$. To that end, we will use a multiplier method, as explained in the previous section. The main technical part of this work consists in constructing a multiplier adapted to the problem we are interested in. In our case, this a smooth real-valued symbol $g\in C^{\infty}(\mathbb R^n,\mathbb R)$ satisfying estimates of the following form for all $\xi\in\mathbb R^n$,
\begin{align}\label{07052020E1}
\begin{split}
	& \vert g(\xi)\vert\lesssim\sum_{j=0}^{r-1}\langle Q(B^T)^j\xi\rangle^{q_j}\langle\xi\rangle^{s_j}, \\
	& \langle\xi\rangle^{\lambda_r}\lesssim\langle Q\xi\rangle^{\lambda_0} + B^T\xi\cdot\nabla_{\xi}g(\xi).
\end{split}
\end{align}
Obtaining the hypoelliptic estimate \eqref{06052020E3} from \eqref{07052020E1} is performed in Section \ref{hypoelliptic}. While doing this construction, the regularity exponent $\lambda_r>0$ involved in \eqref{06052020E3} and \eqref{07052020E1} appears as the last term of the family of positive real numbers $(\lambda_0,\ldots,\lambda_r)$ recursively defined by
\begin{equation}\label{06052020E4}
	\forall j\in\{0,\ldots,r-1\},\quad\lambda_{j+1}\bigg(\frac{1-q_j}{\lambda_j}+1\bigg) = 1+s_j.
\end{equation}
Notice that the real numbers $\lambda_0,\ldots,\lambda_r>0$ defined in \eqref{06052020E5} satisfy the above recurrence relation with $q_j = s_j = 0$. This relation illustrates how the regularity of the source term $Lu$ influences the global Sobolev regularity of the function $u$ through the regularity exponent $\lambda_r>0$ appearing in the hypoelliptic estimate \eqref{06052020E3}. In the case where $r\geq2$, we will need to make the following technical assumption on the family $(\lambda_0,\ldots,\lambda_r)$ constructed just before
\begin{equation}\label{15042020E1}
	\forall j\in\{2,\ldots,r\},\quad \frac{\lambda_{j-1}}{\lambda_{j-2}}+\frac{\lambda_{j-1}}{\lambda_j}\le 2.
\end{equation}
Again, notice that the real numbers $\lambda_0,\ldots,\lambda_r>0$ defined in \eqref{06052020E5} satisfy the estimate \eqref{15042020E1} when $r\geq2$. The main result contained in this paper is the following:

\begin{thm}\label{18032020T1} Let $B$ be a real $n\times n$ matrix and $L$ be one of the two transport operators defined in \eqref{05042019E6}. We consider $Q$ a real symmetric positive semidefinite $n\times n$ matrix. We assume that the matrices $B$ and $Q$ satisfy the Kalman rank condition \eqref{10052018E4} and that the smallest integer $0\le r\le n-1$ satisfying \eqref{10042019E2} is positive. Let $\lambda_0>0$, $0\le q_{r-2}\le q_{r-1}\le1$ and $s_{r-1}\geq s_{r-2}\geq0$ be some non-negative real numbers. By setting $q_j = s_j = 0$ when $j\le r-3$, we consider the family of positive real numbers $(\lambda_0,\ldots,\lambda_r)$ recursively defined by \eqref{06052020E4}. When this family $(\lambda_0,\ldots,\lambda_r)$ is decreasing and satisfies the assumption \eqref{15042020E1} in the case where $r\geq2$, there exists a positive constant $c>0$ such that for all $u\in\mathscr S$,
$$\Vert u\Vert_{H^{\lambda_r}}\le c\Big(\big\Vert\langle QD_x\rangle^{\lambda_0}u\big\Vert_{L^2} + \sum_{j=0}^{r-1}\big\Vert\langle Q(B^T)^jD_x\rangle^{q_j}Lu\big\Vert_{H^{s_j}}\Big).$$
\end{thm}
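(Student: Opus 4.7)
The argument naturally splits into two largely independent parts. In the first, corresponding to what is achieved in Section~\ref{hypoelliptic}, one reduces the proof to the construction of a smooth real symbol $g\in C^{\infty}(\mathbb R^n,\mathbb R)$ satisfying the two pointwise inequalities of \eqref{07052020E1}. In the second, corresponding to Section~\ref{constructionKalman}, such a $g$ is built explicitly from the Kalman chain $Q,QB^T,\ldots,Q(B^T)^r$.

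For the reduction, I would exploit the fact that $g(D_x)$ is self-adjoint (since $g$ is real) while the transport operator satisfies $L+L^{*}=-\Tr(B)\,\Id$ on $\mathscr S$. Testing $Lu$ against $g(D_x)u$ and polarising yields
\begin{equation*}
  \langle[g(D_x),L]u,u\rangle = 2\,\Reelle\langle g(D_x)u,Lu\rangle + \Tr(B)\,\langle g(D_x)u,u\rangle.
\end{equation*}
Since $[g(D_x),\partial_t]=0$ and a direct computation in Fourier variables identifies $[g(D_x),Bx\cdot\nabla_x]$ with the multiplier of symbol $B^T\xi\cdot\nabla_{\xi}g(\xi)$, the left-hand side equals, up to a fixed constant, $\int(B^T\xi\cdot\nabla_{\xi}g)|\widehat u|^{2}\,d\xi$. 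The pointwise lower bound in \eqref{07052020E1}, integrated against $\langle\xi\rangle^{\lambda_r}|\widehat u|^{2}$ (equivalently, by running the same polarisation with the weighted multiplier $\widetilde g(\xi)=g(\xi)\langle\xi\rangle^{\lambda_r}$, whose commutator with $L$ has principal symbol $(B^T\xi\cdot\nabla_{\xi}g)\langle\xi\rangle^{\lambda_r}$), produces an upper bound on $\Vert u\Vert_{H^{\lambda_r}}^{2}$ in terms of $\Vert\langle QD_x\rangle^{\lambda_0}u\Vert_{L^2}^{2}$ and $|\langle\widetilde g(D_x)u,Lu\rangle|$. The cross term is then handled by splitting $g$ according to its upper bound $|g(\xi)|\lesssim\sum_{j}\langle Q(B^T)^j\xi\rangle^{q_j}\langle\xi\rangle^{s_j}$: factoring each weight as a bounded symbol and applying Cauchy--Schwarz with Young's inequality yields exactly $\sum_{j}\Vert\langle Q(B^T)^jD_x\rangle^{q_j}Lu\Vert_{H^{s_j}}$ on the right, up to a small multiple of $\Vert u\Vert_{H^{\lambda_r}}$ that is absorbed on the left.

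The technical heart is the construction of $g$. The natural ansatz is a telescoping sum $g=\sum_{j=0}^{r-1}\varepsilon_j\,g_j$ with small parameters $\varepsilon_j>0$, where $g_j$ depends on $\xi$ only through $Q(B^T)^j\xi$ and is tailored so that differentiating along $B^T\xi$ shifts one step down the Kalman chain, thanks to the identity $B^T\xi\cdot\nabla_{\xi}F(Q(B^T)^j\xi)=dF(Q(B^T)^j\xi)\cdot Q(B^T)^{j+1}\xi$. Each $g_j$ will be chosen of the form $\langle Q(B^T)^{j+1}\xi\rangle^{\alpha_j}$ truncated by a smooth cut-off localised to the region where $\langle Q(B^T)^j\xi\rangle$ is not yet comparable to the target weight $\langle\xi\rangle^{\lambda_j}$. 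Balancing the gain produced at step $j+1$ with what must be absorbed at step $j+2$ forces exactly the recurrence \eqref{06052020E4} on the exponents $\lambda_j$, and the monotonicity of $(\lambda_0,\ldots,\lambda_r)$ together with the technical inequality \eqref{15042020E1} are precisely what allow the cut-offs to be glued smoothly across regions without destroying the positivity of $B^T\xi\cdot\nabla_{\xi}g$. Condition \eqref{10042019E2} then supplies the final step of the cascade, ensuring that $\langle Q\xi\rangle^{\lambda_0}+B^T\xi\cdot\nabla_{\xi}g(\xi)$ dominates $\langle\xi\rangle^{\lambda_r}$ uniformly on $\mathbb R^n$. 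The delicate point—which I expect to be the main obstacle—is the global, sign-preserving gluing of the cut-offs across the conic regions dictated by the chain; it is here that all the hypotheses of the theorem enter non-trivially.
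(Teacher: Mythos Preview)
Your reduction step matches the paper's Section~\ref{hypoelliptic}: same polarisation identity, same commutator $[g(D_x),Bx\cdot\nabla_x]=\Op^w(B^T\xi\cdot\nabla_{\xi}g)$, and the same weighting (the paper applies the basic estimate to $\langle D_x\rangle^{\lambda_r/2}u$, which is equivalent to your $\widetilde g=g\langle\xi\rangle^{\lambda_r}$). One detail you elide: the extra commutator term $g(\xi)\,B^T\xi\cdot\nabla_{\xi}\langle\xi\rangle^{\lambda_r}$ and the $\Tr(B)$ contribution are absorbed only because $\lambda_r>q_j+s_j$ for all $j$ (Lemma~\ref{10042019L1}), and this is precisely where the ordering hypotheses $q_{r-2}\le q_{r-1}$, $s_{r-1}\ge s_{r-2}$ enter.

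The construction of $g$, however, has a genuine gap. Your ansatz $g_j\sim\langle Q(B^T)^{j+1}\xi\rangle^{\alpha_j}\cdot(\text{cut-off})$ cannot produce positivity: differentiating the main factor along $B^T\xi$ gives a term proportional to $Q(B^T)^{j+1}\xi\cdot Q(B^T)^{j+2}\xi$, which has no sign. The paper's building blocks are instead \emph{bilinear} in consecutive chain terms, for instance
\begin{equation*}
  g_r(\xi)=\frac{Q(B^T)^{r-1}\xi\cdot Q(B^T)^r\xi}{\langle\xi\rangle^{2-\lambda_r}}\,\psi\bigg(\frac{|Q(B^T)^{r-1}\xi|^2}{\langle\xi\rangle^{2\lambda_r/\lambda_{r-1}}}\bigg),
\end{equation*}
so that $B^T\xi\cdot\nabla_{\xi}$ applied to the numerator yields $|Q(B^T)^r\xi|^2+Q(B^T)^{r-1}\xi\cdot Q(B^T)^{r+1}\xi$, the square being the source of positivity. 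Moreover the global symbol is not assembled as a flat telescoping sum with small prefactors: the paper proceeds by induction on $r$, gluing an inductive symbol $h_{r-1}$ (built on a subcone $\Omega_1$ where $r-1$ chain terms already span) to $g_r$ via conic cut-offs, and then runs a separate cascade with nested localisations $\mathcal W_k=\prod_{j\le k}W_j$ comparing successive $|Q(B^T)^{r-j}\xi|^{\lambda_{r-j}}$. Condition~\eqref{15042020E1} enters exactly when bounding cross-terms of the form $|Q(B^T)^{r-k+1}\xi|\,|Q(B^T)^{r-k-1}\xi|/|Q(B^T)^{r-k}\xi|^{2-\lambda_{r-k}}$ on the support of $\mathcal W_k\Psi_{k+1}$, and this is where the argument becomes genuinely delicate in a way your outline does not anticipate.
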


The exponent $\lambda_r>0$ appearing in the above statement is explicitly given by  
$$\lambda_r = \frac{\lambda_0(1+s_{r-2})(1+s_{r-1})}{(1-q_{r-2})(1-q_{r-1}) + \lambda_0(1+s_{r-2}) + \lambda_0(1-q_{r-1}) + \lambda_0(1-q_{r-2})(1-q_{r-1})(r-2)},$$
see Subsection \ref{regexp} in the Appendix. We also refer to this subsection where the various assumptions made in Theorem \ref{18032020T1} are explicitly expressed in terms of $\lambda_0$ and the $q_j,s_j$. Notice that when $r\geq2$, $0\le q_{r-2} = q_{r-1} = q\le1$  and $s_{r-2} = s_{r-1} = s\geq0$, the decreasing property of the family $(\lambda_0,\ldots,\lambda_r)$ and the assumption \eqref{15042020E1} are equivalent to the following simple condition
$$(q+s)(1+\lambda_0(r-2))<\lambda_0.$$

In the case where $r=1$, the estimate stated in the above theorem writes as
$$\Vert u\Vert_{H^{\lambda_1}}\lesssim\big\Vert\langle QD_x\rangle^{\lambda_0}u\big\Vert_{L^2} + \big\Vert\langle QD_x\rangle^{q_0}Lu\big\Vert_{H^{s_0}},$$
while in the case where $r\geq2$, this estimate is
$$\Vert u\Vert_{H^{\lambda_r}}\lesssim\big\Vert\langle QD_x\rangle^{\lambda_0}u\big\Vert_{L^2} + \big\Vert\langle Q(B^T)^{r-2}D_x\rangle^{q_{r-2}}Lu\big\Vert_{H^{s_{r-2}}}
+ \big\Vert\langle Q(B^T)^{r-1}D_x\rangle^{q_{r-1}}Lu\big\Vert_{H^{s_{r-1}}}.$$
We therefore obtained the hypoelliptic estimate \eqref{06052020E3} in the cases where $r=1$ and $r=2$ under reasonable assumptions and when $r\geq3$ while assuming moreover that $q_j = s_j = 0$ for all $0\le j\le r-3$. We conjecture that this last assumption is only technical and that the estimate \eqref{06052020E3} holds true without it. The conditions $q_{r-2}\le q_{r-1}$ and $s_{r-1}\geq s_{r-2}$ are also technical. Coupled with the fact that $\lambda_{r-1}>\lambda_r$, they allow to get the estimates
$$\lambda_r>q_{r-1}+s_{r-1},\quad\text{and therefore}\quad\lambda_r>q_{r-2}+s_{r-2},$$
as checked in Lemma \ref{10042019L1}, which appears necessary in the proof of Theorem \ref{18032020T1}, but not necessarily natural. The two other assumptions will be discussed later.

Even if we could not obtain anisotropic hypoelliptic estimates in the general case as presented in \eqref{06052020E1}, the multiplier method we are using in this work allows to obtain such estimates (and even sharper) in particular cases as the one presented in the 

\begin{prop}\label{13042020P1} Let $L$ be one of the two following transport operators
$$\partial_t + x_0\cdot\nabla_{x_1}+x_1\cdot\nabla_{x_2},\quad x_0\cdot\nabla_{x_1}+x_1\cdot\nabla_{x_2},\quad t\in\mathbb R,\ x_0,x_1,x_2\in\mathbb R^n.$$
We consider $\lambda_0>0$, $0\le q_0,q_1\le1$ and $s_1,s_2\geq0$ some non-negative real numbers, and $\lambda_1,\lambda_2>0$ the positive real numbers recursively defined by
$$\lambda_1\bigg(\frac{1-q_0}{\lambda_0}+1\bigg) = 1+s_1\quad\text{and}\quad\lambda_2\bigg(\frac{1-q_1}{\lambda_1}+1\bigg) = 1+s_2.$$
When the real numbers $\lambda_0,\lambda_1,\lambda_2>0$ satisfy the condition \eqref{15042020E1}, there exists a positive constant $c>0$ such that for all $u\in\mathscr S$,
\begin{align*}
	& \big\Vert\langle D_{x_1}\rangle^{\lambda_1}u\big\Vert_{L^2}\le c\big(\big\Vert\langle D_{x_0}\rangle^{\lambda_0}u\big\Vert_{L^2}
	+ \big\Vert\langle D_{x_0}\rangle^{q_0}\langle D_{x_1}\rangle^{s_1}Lu\big\Vert_{L^2}\big), \\[2pt]
	& \big\Vert\langle D_{x_2}\rangle^{\lambda_2}u\big\Vert_{L^2}\le c\big(\big\Vert\langle D_{x_1}\rangle^{\lambda_1}u\big\Vert_{L^2}
	+ \big\Vert\langle D_{x_1}\rangle^{q_1}\langle D_{x_2}\rangle^{s_2}Lu\big\Vert_{L^2}\big).
\end{align*}
\end{prop}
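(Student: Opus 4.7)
Both estimates are established by the multiplier method, using \emph{anisotropic} Fourier multipliers adapted to the staircase structure of $L$ and refining the construction underlying Theorem~\ref{18032020T1}. For each $k \in \{1,2\}$ the plan is to construct a real-valued smooth symbol $g_k(\xi_0,\xi_1,\xi_2)$ satisfying two properties: a size bound $|g_k(\xi)| \lesssim \langle\xi_{k-1}\rangle^{q_{k-1}}\langle\xi_k\rangle^{\lambda_k+s_k}$, and a pointwise lower bound
\begin{equation*}
(\xi_1\cdot\nabla_{\xi_0}g_k + \xi_2\cdot\nabla_{\xi_1}g_k)(\xi)\ \geq\ c\langle\xi_k\rangle^{2\lambda_k}\ -\ C\langle\xi_{k-1}\rangle^{2\lambda_{k-1}}.
\end{equation*}

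Since the drift matrix $B$ associated with $Bx\cdot\nabla_x = x_0\cdot\nabla_{x_1}+x_1\cdot\nabla_{x_2}$ is nilpotent (hence trace zero) and $\partial_t$ is skew-adjoint, a direct commutator computation yields the multiplier identity
\begin{equation*}
2\operatorname{Re}\langle Lu, g_k(D_x)u\rangle_{L^2}\ =\ \int_{\mathbb R^{3n}} (\xi_1\cdot\nabla_{\xi_0}g_k + \xi_2\cdot\nabla_{\xi_1}g_k)(\xi)\,|\hat u(\xi)|^2\,d\xi.
\end{equation*}
The pointwise positivity then lower-bounds the right-hand side by $c\|\langle D_{x_k}\rangle^{\lambda_k}u\|_{L^2}^2 - C\|\langle D_{x_{k-1}}\rangle^{\lambda_{k-1}}u\|_{L^2}^2$, while Cauchy--Schwarz combined with the size bound estimates the left-hand side by $\tfrac{1}{\varepsilon}\|\langle D_{x_{k-1}}\rangle^{q_{k-1}}\langle D_{x_k}\rangle^{s_k}Lu\|_{L^2}^2 + \varepsilon\|\langle D_{x_k}\rangle^{\lambda_k}u\|_{L^2}^2$; absorbing the small term concludes.

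The natural ansatz is
\begin{equation*}
g_k(\xi) = \chi\!\left(\frac{\langle\xi_k\rangle^{\lambda_k}}{\langle\xi_{k-1}\rangle^{\lambda_{k-1}}}\right) (\xi_{k-1}\cdot\xi_k)\,\langle\xi_{k-1}\rangle^{q_{k-1}-1}\,\langle\xi_k\rangle^{s_k+\lambda_k-1},
\end{equation*}
with $\chi$ a smooth cutoff vanishing near zero and equal to $1$ at infinity. The leading contribution of $\xi_k\cdot\nabla_{\xi_{k-1}}g_k$ reduces to $\chi\langle\xi_k\rangle^{2\lambda_k}$ on the support of $\chi$ precisely because of the recurrence relation defining $\lambda_k$; off the support of $\chi$ one has $\langle\xi_k\rangle^{\lambda_k}\lesssim\langle\xi_{k-1}\rangle^{\lambda_{k-1}}$ and the required positivity is trivial; the transition terms from differentiating $\chi$ localise to $\langle\xi_k\rangle^{\lambda_k}\sim\langle\xi_{k-1}\rangle^{\lambda_{k-1}}$ and are similarly controlled.

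\textbf{Main obstacle.} Estimate~(2) is the more direct of the two: choosing $g_2$ independent of $\xi_0$ kills the term $\xi_1\cdot\nabla_{\xi_0}g_2$, and the analysis essentially reduces to the $r=1$ case of Theorem~\ref{18032020T1}. The genuine difficulty is estimate~(1), where the extra term $\xi_2\cdot\nabla_{\xi_1}g_1$ in the multiplier identity introduces a $\xi_2$-dependence absent from the right-hand side of~(1). Handling it will require either building a $\xi_2$-dependent correction into $g_1$, or invoking assumption~\eqref{15042020E1} (which in the present case $r=2$ reads $\lambda_1/\lambda_0+\lambda_1/\lambda_2\le2$) to absorb the cross term, via Young's inequality, into $\langle\xi_0\rangle^{2\lambda_0}$ plus a small multiple of $\langle\xi_1\rangle^{2\lambda_1}$ that is subsequently reabsorbed on the left-hand side. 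This absorption step, rather than the construction of $g_k$ itself, is the technical heart of the argument and is exactly where the hypothesis~\eqref{15042020E1} enters.
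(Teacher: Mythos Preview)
Your overall plan is correct and matches the paper's: a multiplier identity, a symbol of the form $(\xi_{k-1}\cdot\xi_k)$ times weights and a cutoff localising to the region $\langle\xi_k\rangle^{\lambda_k}\gtrsim\langle\xi_{k-1}\rangle^{\lambda_{k-1}}$, and the recurrence \eqref{06052020E4} forcing the exponents to balance. Estimate~(2) is indeed the easy one, settled by a multiplier $g(\xi_1,\xi_2)$ independent of $\xi_0$ exactly as you say.

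The one genuine gap is your ``either/or'' for estimate~(1). Those are not alternatives: you need \emph{both}. With your ansatz $g_1(\xi_0,\xi_1)$, the cross term $\xi_2\cdot\nabla_{\xi_1}g_1$ carries a factor of $\xi_2$ that is completely unbounded in the $\xi_2$-direction; no Young inequality can absorb it into $\langle\xi_0\rangle^{2\lambda_0}+\varepsilon\langle\xi_1\rangle^{2\lambda_1}$ alone. The paper's remedy is to insert a further cutoff $w(|\xi_1|^2/\langle\xi_2\rangle^{2\lambda_2/\lambda_1})$ into the multiplier for estimate~(1), so that on its support one has $|\xi_2|\lesssim|\xi_1|^{\lambda_1/\lambda_2}$; combined with the $\psi$-cutoff giving $|\xi_0|\lesssim\Gamma^{-1}|\xi_1|^{\lambda_1/\lambda_0}$, the cross term becomes
$\lesssim \Gamma^{-1}|\xi_1|^{\lambda_1/\lambda_0+\lambda_1/\lambda_2-2+2\lambda_1}$,
and it is precisely assumption~\eqref{15042020E1} that makes this $\lesssim\Gamma^{-1}|\xi_1|^{2\lambda_1}$, absorbable for $\Gamma$ large. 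So the $\xi_2$-localisation is what makes~\eqref{15042020E1} usable, not an alternative to it.

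A second, smaller point: because this $\xi_2$-localised multiplier for estimate~(1) satisfies a size bound of the form $|g|\lesssim\langle\xi_1\rangle^{q_1}\langle\xi_2\rangle^{s_2+\lambda_2}$ (from the region where the $w$-cutoff transitions), the Cauchy--Schwarz step produces an unwanted factor $\|\langle D_{x_2}\rangle^{\lambda_2}u\|_{L^2}$ on the right. The paper handles this by first proving estimate~(2), then feeding it back in and finally specialising the auxiliary parameters to $q_1=s_2=0$, so that the residual $Lu$-norm is dominated by $\|\langle D_{x_0}\rangle^{q_0}\langle D_{x_1}\rangle^{s_1}Lu\|_{L^2}$. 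Your sketch treats the two estimates as independent, but in the paper estimate~(1) genuinely relies on estimate~(2).
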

 
The main steps of the proof of this proposition are outline in Section \ref{particular} in order to present the multiplier method. Keeping the context of Proposition \ref{13042020P1} and assuming moreover that $\lambda_0>\lambda_1>\lambda_2$, we deduce that for all $u\in\mathscr S$,
$$\Vert u\Vert_{H^{\lambda_2}}\lesssim\big\Vert\langle D_{x_0}\rangle^{\lambda_0}u\big\Vert_{L^2}
+ \big\Vert\langle D_{x_0}\rangle^{q_0}\langle D_{x_1}\rangle^{s_1}Lu\big\Vert_{L^2} + \big\Vert\langle D_{x_1}\rangle^{q_1}\langle D_{x_2}\rangle^{s_2}Lu\big\Vert_{L^2}.$$
This particular example therefore suggests that assuming that the family $(\lambda_0,\ldots,\lambda_r)$ is decreasing in Theorem \ref{18032020T1} is natural. Moreover, as it is present in both results Theorem \ref{18032020T1} and Proposition \ref{13042020P1}, we believe that the assumption \eqref{15042020E1} is necessary to obtain an estimate like \eqref{06052020E1}. This is reinforced by the fact the estimate \eqref{06052020E2} holds true with regularity exponents $\lambda_j>0$ given in \eqref{06052020E5} satisfying this condition, as we have already mentioned.

\medskip

\begin{ex} Let $L$ be one of the two following transport operators
$$\partial_t + v\cdot\nabla_x,\quad v\cdot\nabla_x,\quad t\in\mathbb R,\ x,v\in\mathbb R^n.$$
We also consider $p>0$, $0\le q\le 1$ and $s\geq0$ some non-negative real numbers. The very same proof as the one used to obtain Proposition \ref{13042020P1} allows to derive the following estimate for all $u\in\mathscr S$,
$$\big\Vert\langle D_x\rangle^{\frac{p(1+s)}{1-q+p}} u\big\Vert_{L^2}\lesssim\big\Vert\langle D_v\rangle^pu\big\Vert_{L^2} + \big\Vert\langle D_x\rangle^s\langle D_v\rangle^qLu\big\Vert_{L^2}.$$
Even more, the same strategy of proof also allows to directly obtain that given a source term $f\in L^2_{t,x,v}$ satisfying $\langle D_x\rangle^s\langle D_v\rangle^qf\in L^2_{t,x,v}$, any solution $u\in L^2_{t,x,v}$ of the transport equation \eqref{11042019E1} such that $\langle D_v\rangle^pu\in L^2_{t,x,v}$ satisfies an hypoelliptic estimate of the form 
$$\big\Vert\langle D_x\rangle^{\frac{p(1+s)}{1-q+p}} u\big\Vert_{L^2_{t,x,v}}\lesssim\big\Vert\langle D_v\rangle^pu\big\Vert_{L^2_{t,x,v}} + \big\Vert\langle D_x\rangle^s\langle D_v\rangle^qf\big\Vert_{L^2_{t,x,v}}.$$
This this is a generalization of the estimate \eqref{15072019E2}.
\end{ex}

\medskip

Notice that the two estimates stated in the above proposition are sharper versions of the hypoelliptic estimate \eqref{06052020E1}. They suggest that more general estimates of the form 
$$\sum_{j=1}^r\big\Vert\langle Q(B^T)^jD_x\rangle^{\lambda_j}u\big\Vert_{L^2}\lesssim\big\Vert\langle QD_x\rangle^{\lambda_0}u\big\Vert_{L^2} + \sum_{j=0}^{r-1}\big\Vert\langle Q(B^T)^jD_x\rangle^{q_j}\langle Q(B^T)^{j+1}D_x\rangle^{s_{j+1}}Lu\big\Vert_{L^2},$$
might hold true, the regularity exponents $\lambda_1,\ldots,\lambda_r>0$ being recursively defined by \eqref{06052020E4}, the matrices $B$ and $Q$ satisfying the Kalman rank condition \eqref{10052018E4} or not. Nevertheless, this kind of estimate seems out of reach for the moment, even under the Kalman rank condition. However, there are good reasons to believe that the weaker estimate \eqref{06052020E1} can be obtained with a multiplier method as the one used in this work.

\section{The multiplier method in a particular case}\label{particular}

The aim of this subsection is to present the main steps of the proof of Proposition \ref{13042020P1}. No technical calculus will be detailed here, the objective is to illustrate how the multiplier method works on a particular example. Moreover, all the calculus that are omitted in this section are of the same type as those detailed in Section \ref{constructionKalman} where a multiplier is constructed in the general case. Let $L$ be the following transport operator, with $t\in\mathbb R$ and $(x_0,x_1,x_2)\in\mathbb R^{3n}$,
$$L = \partial_t + x_0\cdot\nabla_{x_1}+x_1\cdot\nabla_{x_2}.$$
The following proof works exactly the same when $L$ stands for the following autonomous transport operator
\begin{equation}\label{03052020E1}
	x_0\cdot\nabla_{x_1}+x_1\cdot\nabla_{x_2},
\end{equation}
as we will see. We also consider $\lambda_0>0$, $0\le q_0,q_1\le1$ and $s_1,s_2\geq0$ some non-negative real numbers, and $\lambda_1,\lambda_2>0$ the positive real numbers recursively defined by
\begin{equation}\label{14042020E2}
	\lambda_1\bigg(\frac{1-q_0}{\lambda_0}+1\bigg) = 1+s_1\quad\text{and}\quad\lambda_2\bigg(\frac{1-q_1}{\lambda_1}+1\bigg) = 1+s_2.
\end{equation}
Let us assume that the real numbers $\lambda_0,\lambda_1,\lambda_2>0$ satisfy the condition \eqref{15042020E1}, that is,
\begin{equation}\label{24062020E1}
	\frac{\lambda_1}{\lambda_0}+\frac{\lambda_1}{\lambda_2}\le2.
\end{equation}
The result we aim at proving states that the following estimates hold for all $u\in\mathscr S$,
\begin{align}
	& \big\Vert\langle D_{x_2}\rangle^{\lambda_2}u\big\Vert_{L^2}\lesssim\big\Vert\langle D_{x_1}\rangle^{\lambda_1}u\big\Vert_{L^2}
	+ \big\Vert\langle D_{x_1}\rangle^{q_1}\langle D_{x_2}\rangle^{s_2}Lu\big\Vert_{L^2}, \label{28042020E1} \\[2pt]
	& \big\Vert\langle D_{x_1}\rangle^{\lambda_1}u\big\Vert_{L^2}\lesssim\big\Vert\langle D_{x_0}\rangle^{\lambda_0}u\big\Vert_{L^2}
	+ \big\Vert\langle D_{x_0}\rangle^{q_0}\langle D_{x_1}\rangle^{s_1}Lu\big\Vert_{L^2}. \label{03052020E2}
\end{align}
The main step consists in constructing smooth real-valued symbols $g_1\in C^{\infty}(\mathbb R^{2n},\mathbb R^n)$ and $g_2\in C^{\infty}(\mathbb R^{3n},\mathbb R)$ satisfying the following estimates for all $(\xi_0,\xi_1,\xi_2)\in\mathbb R^{3n}$, 
\begin{align}
	& \vert g_1(\xi_1,\xi_2)\vert\lesssim\langle\xi_1\rangle^{q_1}\langle\xi_2\rangle^{s_2}\langle\xi_2\rangle^{\lambda_2}, \label{30042020E1} \\
	& \vert g_2(\xi_0,\xi_1,\xi_2)\vert\lesssim\langle\xi_0\rangle^{q_0}\langle\xi_1\rangle^{s_1}\langle\xi_1\rangle^{\lambda_1}, \label{30042020E2}
\end{align}
and such that
\begin{align}
	& \langle\xi_2\rangle^{2\lambda_2}\lesssim\langle\xi_1\rangle^{2\lambda_1} + \xi_2\cdot\nabla_{\xi_1}g_1(\xi_1,\xi_2), \label{30042020E3} \\
	& \langle\xi_1\rangle^{2\lambda_1} \lesssim\langle\xi_0\rangle^{2\lambda_0} + \xi_2\cdot\nabla_{\xi_1}g_1(\xi_1,\xi_2) + \xi_2\cdot\nabla_{\xi_1}g_2(\xi_0,\xi_1,\xi_2) + \xi_1\cdot\nabla_{\xi_0} g_2(\xi_0,\xi_1,\xi_2). \label{30042020E4}
\end{align}
Once these constructions are performed, the estimates \eqref{28042020E1} can be obtained in the following way. Setting $x = (x_0,x_1,x_2)\in\mathbb R^{3n}$ the global space variable, we begin by considering the following quantity for all $u\in\mathscr S$,
$$\Reelle\big\langle Lu,g_1(D_x)u\big\rangle_{L^2} = \Reelle\big\langle\partial_tu,g_1(D_x)u\big\rangle_{L^2} + \Reelle\big\langle(x_0\cdot\nabla_{x_1}+x_1\cdot\nabla_{x_2})u,g_1(D_x)u\big\rangle_{L^2}.$$
On the one hand, since the operator $\partial_t$ is formally skew-selfadjoint, $g(D_x)$ is formally selfadjoint (the symbol $g$ being real-valued) and that these two operators commute, we get that for all $u\in\mathscr S$,
\begin{multline*}
	\Reelle\big\langle\partial_tu,g_1(D_x)u\big\rangle_{L^2} = - \Reelle\big\langle g_1(D_x)u,\partial_tu\big\rangle_{L^2} \\
	= -\Reelle\overline{\big\langle\partial_tu,g_1(D_x)u\big\rangle}_{L^2} = -\Reelle\big\langle\partial_tu,g_1(D_x)u\big\rangle_{L^2},
\end{multline*}
and as a consequence,
$$\Reelle\big\langle\partial_tu,g_1(D_x)u\big\rangle_{L^2} = 0.$$
Notice that this equality is the reason why the proof we are presenting now works the same when $L$ stands for the autonomous operator \eqref{03052020E1}. On the other hand, by setting $\xi\in\mathbb R^{3n}$ the dual variable of $x\in\mathbb R^{3n}$, we deduce from Plancherel's theorem that for all $u\in\mathscr S$,
$$\Reelle\big\langle(x_0\cdot\nabla_{x_1}+x_1\cdot\nabla_{x_2})u,g_1(D_x)u\big\rangle_{L^2} = 	
\Reelle\big\langle(\xi_2\cdot\nabla_{\xi_1}+\xi_1\cdot\nabla_{\xi_0})\widehat u(\xi),g_1(\xi)\widehat u(\xi)\big\rangle_{L^2},$$
where $\widehat u$ denotes the Fourier transform of the Schwartz function $u\in\mathscr S$ with respect to the space variable $x\in\mathbb R^{3n}$. Using the fact that the multiplier $g_1$ does not depend on the variable $\xi_0\in\mathbb R^n$, it therefore follows from Leibniz' formula that for all $u\in\mathscr S$,
\begin{multline*}
	\Reelle\big\langle(x_0\cdot\nabla_{x_1}+x_1\cdot\nabla_{x_2})u,g_1(D_x)u\big\rangle_{L^2}
	= -\Reelle\big\langle\widehat u(\xi),(\xi_2\cdot\nabla_{\xi_1}g_1(\xi))\widehat u(\xi)\big\rangle_{L^2} \\[5pt]
	- \Reelle\big\langle\widehat u(\xi),g_1(\xi)(\xi_2\cdot\nabla_{\xi_1}+\xi_1\cdot\nabla_{\xi_0})\widehat u(\xi)\big\rangle_{L^2},
\end{multline*}
with
$$\Reelle\big\langle\widehat u(\xi),g_1(\xi)(\xi_2\cdot\nabla_{\xi_1}+\xi_1\cdot\nabla_{\xi_0})\widehat u(\xi)\big\rangle_{L^2} 
= \Reelle\big\langle(x_0\cdot\nabla_{x_1}+x_1\cdot\nabla_{x_2})u,g_1(D_x)u\big\rangle_{L^2}.$$
Thus, we deduce that for all $u\in\mathscr S$,
$$-2\Reelle\big\langle Lu,g_1(D_x)u\big\rangle_{L^2} = \Reelle\big\langle\widehat u(\xi),(\xi_2\cdot\nabla_{\xi_1}g_1(\xi))\widehat u(\xi)\big\rangle_{L^2}.$$
Plancherel's theorem and \eqref{30042020E3} then imply that for all $u\in\mathscr S$,
$$\big\Vert\langle D_{x_2}\rangle^{\lambda_2}u\big\Vert^2_{L^2}\lesssim\big\Vert\langle D_{x_1}\rangle^{\lambda_1}u\big\Vert^2_{L^2}-2\Reelle\big\langle Lu,g_1(D_x)u\big\rangle_{L^2}.$$
Finally, the estimate \eqref{30042020E1} and the Cauchy-Schwarz inequality show that for all $u\in\mathscr S$,
$$\big\vert\Reelle\big\langle Lu,g_1(D_x)u\big\rangle_{L^2}\big\vert\lesssim\big\Vert\langle D_{x_1}\rangle^{q_1}\langle D_{x_2}\rangle^{s_2}Lu\big\Vert_{L^2}\big\Vert\langle D_{x_2}\rangle^{\lambda_2}u\big\Vert_{L^2}.$$
This ends the proof of the estimate \eqref{28042020E1}, after using Young's inequality. By considering the symbol $g = g_1+g_2\in C^{\infty}(\mathbb R^{3n},\mathbb R)$ instead of the symbol $g_1$, we obtain in the very same way that for all $u\in\mathscr S$,
\begin{multline*}
	\big\Vert\langle D_{x_1}\rangle^{\lambda_1}u\big\Vert^2_{L^2}\lesssim\big\Vert\langle D_{x_0}\rangle^{\lambda_0}u\big\Vert^2_{L^2} 
	+ \big\Vert\langle D_{x_0}\rangle^{q_0}\langle D_{x_1}\rangle^{s_1}Lu\big\Vert_{L^2}\big\Vert\langle D_{x_1}\rangle^{\lambda_1}u\big\Vert_{L^2} \\[5pt]
	+ \big\Vert\langle D_{x_1}\rangle^{q_1}\langle D_{x_2}\rangle^{s_2}Lu\big\Vert_{L^2}\big\Vert\langle D_{x_2}\rangle^{\lambda_2}u\big\Vert_{L^2}.
\end{multline*}
We then obtain the estimate \eqref{03052020E2} from \eqref{28042020E1}, another use of Young's inequality and choosing $q_1=s_2=0$. Now, let us explain how to construct such symbols $g_1$ and $g_2$. Considering a $C^{\infty}_0(\mathbb R,[0,1])$ cut-off function $\psi$ localized in a neighborhood of the origin, we begin by considering the symbol $g_1\in C^{\infty}(\mathbb R^{2n})$ defined for all $(\xi_1,\xi_2)\in\mathbb R^{2n}$ by
$$g_1(\xi_1,\xi_2) = \frac{\xi_1\cdot\xi_2}{\langle\xi_2\rangle^{2-2\lambda_2}}\ \psi\bigg(\frac{\vert\xi_1\vert^2}{\langle\xi_2\rangle^{2\lambda_2/\lambda_1}}\bigg).$$
Notice that the symbol $g_1$ satisfies the estimate \eqref{30042020E1}, since we deduce from \eqref{14042020E2} and the fact that the function $\psi$ is supported near the origin that for all $(\xi_1,\xi_2)\in\mathbb R^{2n}$,
$$\langle\xi_1\rangle^{-q_1}\vert g(\xi_1,\xi_2)\vert\le\frac{\langle\xi_1\rangle^{1-q_1}\langle\xi_2\rangle}{\langle\xi_2\rangle^{2-2\lambda_2}}
\bigg\vert\psi\bigg(\frac{\vert\xi_1\vert^2}{\langle\xi_2\rangle^{2\lambda_2/\lambda_1}}\bigg)\bigg\vert
\lesssim\frac{\langle\xi_2\rangle^{(1-q_1)\lambda_2/\lambda_1+1}}{\langle\xi_2\rangle^{2-2\lambda_2}}
= \langle\xi_2\rangle^{s_2}\langle\xi_2\rangle^{\lambda_2}.$$
Moreover, computing $\nabla_{\xi_1} g_1$ and using the support localization of the function $\psi$ anew, we deduce that for all $(\xi_1,\xi_2)\in\mathbb R^{2n}$,
\begin{equation}\label{30042020E5}
	\langle\xi_2\rangle^{2\lambda_2}\lesssim\vert\xi_1\vert^{2\lambda_1}w\bigg(\frac{\vert\xi_1\vert^2}{\langle\xi_2\rangle^{2\lambda_2/\lambda_1}}\bigg) + \xi_2\cdot\nabla_{\xi_1} g_1(\xi_1,\xi_2) + 1,
\end{equation}
where $w$ is a $C^{\infty}(\mathbb R,[0,1])$ function vanishing in a neighborhood of the origin and constant equal to $1$ at infinity, controlling the functions $1-\psi$ and $\vert\psi'\vert$. Since the function $w$ is bounded, we therefore obtain the estimate \eqref{30042020E3}. Moreover, since $1 = (1-w) + w$ and that the function $1-w$ is localized near the origin, we obtain that for all $(\xi_1,\xi_2)\in\mathbb R^{2n}$,
\begin{equation}\label{30042020E6}
	\vert\xi_1\vert^{2\lambda_1}\lesssim\langle\xi_2\rangle^{2\lambda_2} + \vert\xi_1\vert^{2\lambda_1}w\bigg(\frac{\vert\xi_1\vert^2}{\langle\xi_2\rangle^{2\lambda_2/\lambda_1}}\bigg).
\end{equation}
Therefore, to derive \eqref{30042020E4}, we need to control the term 
$$\vert\xi_1\vert^{2\lambda_1}w\bigg(\frac{\vert\xi_1\vert^2}{\langle\xi_2\rangle^{2\lambda_2/\lambda_1}}\bigg).$$
To that end, we consider $\Gamma\gg1$ a large positive constant whose value will be chosen later. By using that $1\lesssim\psi+w$, we deduce that for all $(\xi_1,\xi_2)\in\mathbb R^{2n}$,
\begin{multline}\label{30042020E7}
	\vert\xi_1\vert^{2\lambda_1}w\bigg(\frac{\vert\xi_1\vert^2}{\langle\xi_2\rangle^{2\lambda_2/\lambda_1}}\bigg) \lesssim
	\vert\xi_1\vert^{2\lambda_1}w\bigg(\frac{\vert\xi_1\vert^2}{\langle\xi_2\rangle^{2\lambda_2/\lambda_1}}\bigg)w\bigg(\frac{\Gamma^2\vert\xi_0\vert^2}{\vert\xi_1\vert^{2\lambda_1/\lambda_0}}\bigg) \\
	+\vert\xi_1\vert^{2\lambda_1}w\bigg(\frac{\vert\xi_1\vert^2}{\langle\xi_2\rangle^{2\lambda_2/\lambda_1}}\bigg)\psi\bigg(\frac{\Gamma^2\vert\xi_0\vert^2}{\vert\xi_1\vert^{2\lambda_1/\lambda_0}}\bigg),
\end{multline}
and the support localization of $w$ implies that
\begin{equation}\label{30042020E8}
	\vert\xi_1\vert^{2\lambda_1}w\bigg(\frac{\vert\xi_1\vert^2}{\langle\xi_2\rangle^{2\lambda_2/\lambda_1}}\bigg)w\bigg(\frac{\Gamma^2\vert\xi_0\vert^2}{\vert\xi_1\vert^{2\lambda_1/\lambda_0}}\bigg)\lesssim\Gamma^{2\lambda_0}\vert\xi_0\vert^{2\lambda_0}.
\end{equation}
In order to control the term
$$\vert\xi_1\vert^{2\lambda_1}w\bigg(\frac{\vert\xi_1\vert^2}{\langle\xi_2\rangle^{2\lambda_2/\lambda_1}}\bigg)\psi\bigg(\frac{\Gamma^2\vert\xi_0\vert^2}{\vert\xi_1\vert^{2\lambda_1/\lambda_0}}\bigg),$$
we consider the real-valued symbol $g_2\in C^{\infty}(\mathbb R^{3n},\mathbb R)$ defined for all $(\xi_0,\xi_1,\xi_2)\in\mathbb R^{3n}$ by
$$g_2(\xi_0,\xi_1,\xi_2) = \frac{\xi_0\cdot\xi_1}{\vert\xi_1\vert^{2-2\lambda_1}}\ w\bigg(\frac{\vert\xi_1\vert^2}{\langle\xi_2\rangle^{2\lambda_2/\lambda_1}}\bigg)\psi\bigg(\frac{\Gamma^2\vert\xi_0\vert^2}{\vert\xi_1\vert^{2\lambda_1/\lambda_0}}\bigg).$$
Notice that the symbol $g_2$ is well-defined thanks to the fact that
$$\langle\xi_2\rangle^{\lambda_2}\lesssim\vert\xi_1\vert^{\lambda_1}\quad\text{on}\quad\Supp w\bigg(\frac{\vert\xi_1\vert^2}{\langle\xi_2\rangle^{2\lambda_2/\lambda_1}}\bigg).$$
Moreover, the same arguments as the ones used to justify that the symbol $g_1$ satisfies the estimate \eqref{30042020E1} also allow to prove that the estimate \eqref{30042020E2} holds for the symbol $g_2$ we have just introduced. Until now, we have not needed to use the technical assumption \eqref{24062020E1}. Exploiting this relation on $\lambda_0,\lambda_1,\lambda_2$ and the support localizations of the functions $\psi$ and $w$ as before, a direct calculus implies that for all $(\xi_0,\xi_1,\xi_2)\in\mathbb R^{3n}$,
\begin{equation}\label{30042020E9}
	\vert\xi_1\vert^{2\lambda_1}w\bigg(\frac{\vert\xi_1\vert^2}{\langle\xi_2\rangle^{2\lambda_2/\lambda_1}}\bigg)\psi\bigg(\frac{\Gamma^2\vert\xi_0\vert^2}{\vert\xi_1\vert^{2\lambda_1/\lambda_0}}\bigg)\lesssim\Gamma^{2\lambda_0}\vert\xi_0\vert^{2\lambda_0} + \xi_1\cdot\nabla_{\xi_0}g_2(\xi_0,\xi_1,\xi_2),
\end{equation}
and
\begin{equation}\label{30042020E10}
	-\frac1{\Gamma}\vert\xi_1\vert^{2\lambda_1}\lesssim\xi_2\cdot\nabla_{\xi_1}g_2(\xi_0,\xi_1,\xi_2).
\end{equation}
Gathering the estimates \eqref{30042020E6}, \eqref{30042020E7}, \eqref{30042020E8}, \eqref{30042020E9}, \eqref{30042020E10}, we obtain that there exists a positive constant $c>0$ independent of $\Gamma\gg1$ such that for all $(\xi_0,\xi_1,\xi_2)\in\mathbb R^{3n}$,
\begin{multline*}
	\bigg(1-\frac c{\Gamma}\bigg)\vert\xi_1\vert^{2\lambda_1}\lesssim_{\ \Gamma}\langle\xi_0\rangle^{2\lambda_0} + \xi_2\cdot\nabla_{\xi_1}g_1(\xi_1,\xi_2) \\
	+ \xi_2\cdot\nabla_{\xi_1}g_2(\xi_0,\xi_1,\xi_2) + \xi_1\cdot\nabla_{\xi_0} g_2(\xi_0,\xi_1,\xi_2).
\end{multline*}
Taking $\Gamma\gg1$ large enough therefore ends the proof of \eqref{30042020E4}.

\section{Hypoelliptic estimates \textit{via} the multiplier method in the general case}\label{hypoelliptic}

This subsection is devoted to the proof of Theorem \ref{18032020T1}. We consider $B$ a real $n\times n$ matrix and $L$ one of the two transport operators defined in \eqref{05042019E6}. We also consider $Q$ a real symmetric positive semidefinite $n\times n$ matrix. Let us assume that the matrices $B$ and $Q$ satisfy the Kalman rank condition \eqref{10052018E4}, the associated Kalman index defined in \eqref{10042019E2} satisfying $r\geq1$. We consider $\lambda_0>0$, $0\le q_{r-2}\le q_{r-1}\le1$ and $s_{r-1}\geq s_{r-2}\geq0$ some non-negative real numbers. By setting $q_j = s_j = 0$ when $j\le r-3$, we define recursively the positive real numbers $\lambda_1,\ldots,\lambda_r>0$ by
$$\forall j\in\{0,\ldots,r-1\},\quad\lambda_{j+1}\bigg(\frac{1-q_j}{\lambda_j}+1\bigg) = 1+s_j.$$
Let us assume that the family $(\lambda_0,\ldots,\lambda_r)$ is decreasing and satisfies the assumption \eqref{15042020E1}. We aim at establishing that there exists a positive constant $c>0$ such that for all Schwartz function $u\in\mathscr S$,
\begin{equation}\label{31032020E6}
	\Vert u\Vert_{H^{\lambda_r}}\le c\Big(\big\Vert\langle QD_x\rangle^{\lambda_0}u\big\Vert_{L^2} + \sum_{j=0}^{r-1}\big\Vert\langle Q(B^T)^jD_x\rangle^{q_j}Lu\big\Vert_{H^{s_j}}\Big).
\end{equation}
The proof of the estimate \eqref{31032020E6} is based on a multiplier method. Since the matrices $B$ and $Q$ satisfy the Kalman rank condition \eqref{10052018E4}, the definition \eqref{10042019E2} of the associated integer $r\geq1$ implies that there exists a positive constant $c_0>0$ such that
$$\forall\xi\in\mathbb R^n,\quad \sum_{j=0}^r\big\vert Q(B^T)^j\xi\big\vert^2\geq c_0\vert\xi\vert^2.$$
It therefore follows from Proposition \ref{05122018P1} that there exists a real-valued symbol $g\in C^{\infty}(\mathbb R^n)$ satisfying that there exist some positive constants $c_1,c_2,c_3>0$ such the estimates
\begin{equation}\label{04122018E4}
	\vert g(\xi)\vert\le c_1\sum_{j=0}^{r-1}\langle Q(B^T)^j\xi\rangle^{q_j}\langle\xi\rangle^{s_j},
\end{equation}
and
\begin{equation}\label{04122018E5}
	\langle\xi\rangle^{\lambda_r}\le c_2\langle Q\xi\rangle^{\lambda_0} + c_3B^T\xi\cdot\nabla_{\xi}g(\xi),
\end{equation}
hold for all $\xi\in\mathbb R^n$. Notice that the operators $g(D_x)$ and $\partial_t$ are respectively formally selfadjoint and skew-selfadjoint, because the symbol $g$ is real-valued, and commute. Moreover, the operator $\Op^w(Bx\cdot i\xi)$ defined by the Weyl quantization of the symbol $Bx\cdot i\xi$ is formally skew-selfadjoint, because this symbol is purely imaginary. We therefore deduce from the definition \eqref{05042019E6} of the operator $L$ and Lemma \ref{24042020L1} that for all $u\in\mathscr S$,
\begin{equation}\label{04122018E6}
	\Reelle\big\langle Lu,g(D_x)u\big\rangle_{L^2}
	= \frac12\big\langle\big[g(D_x),\Op^w(Bx\cdot i\xi)\big]u,u\big\rangle_{L^2}
	-\frac12\Tr(B)\big\langle u,g(D_x)u\big\rangle_{L^2},
\end{equation}
where we used that 
\begin{equation}\label{28062019E2}
	Bx\cdot\nabla_x = \Op^w(Bx\cdot i\xi) - \frac12\Tr(B).
\end{equation}
Moreover, the symbol $Bx\cdot i\xi$ is a first order polynomial and elements of Weyl calculus, see e.g. \cite{MR2304165} (Theorem 18.5.6), show that the commutator between the operators $g(D_x)$ and $\Op^w(Bx\cdot i\xi)$ is given by
\begin{equation}\label{04122018E7}
	\big[g(D_x),\Op^w(Bx\cdot i\xi)\big] 
	= \Op^w\bigg(\frac1i\big\{g(\xi),Bx\cdot i\xi\big\}\bigg) = \Op^w\big(B^T\xi\cdot\nabla_{\xi}g(\xi)\big).
\end{equation}
We deduce from \eqref{04122018E4}, \eqref{04122018E6}, \eqref{04122018E7} and Plancherel's theorem that for all $u\in\mathscr S$,
\begin{multline}\label{04122018E9}
	\big\langle(B^T\xi\cdot\nabla_{\xi}g(\xi))\mathscr F_x u,\mathscr F_x u\big\rangle_{L^2}
	\le2\Reelle\big\langle Lu,g(D_x)u\big\rangle_{L^2} \\
	+\Tr(B)c_1\sum_{j=0}^{r-1}\big\Vert\langle Q(B^T)^jD_x\rangle^{\frac{q_j}2}\langle D_x\rangle^{\frac{s_j}2}u\big\Vert^2_{L^2},
\end{multline}
where $\mathscr F_x u$ denotes the partial Fourier transform of the Schwartz function $u\in\mathscr S$ with respect to the space variable $x\in\mathbb R^n$. The above estimate implies that for all $u\in\mathscr S$,
\begin{multline}\label{04122018E10}
	\big\langle (c_2\langle Q\xi\rangle^{\lambda_0} + c_3(B^T\xi\cdot\nabla_{\xi}g(\xi)))\mathscr F_x u,\mathscr F_x u\big\rangle_{L^2}
	\le2c_3 \Reelle\big\langle Lu,g(D_x)u\big\rangle_{L^2} \\[5pt]
	+ c_2\big\Vert\langle QD_x\rangle^{\frac{\lambda_0}2}u\big\Vert^2_{L^2}
	+\Tr(B)c_1c_3\sum_{j=0}^{r-1}\big\Vert\langle Q(B^T)^jD_x\rangle^{\frac{q_j}2}\langle D_x\rangle^{\frac{s_j}2}u\big\Vert^2_{L^2}.
\end{multline}
As a consequence of \eqref{04122018E5}, \eqref{04122018E10} and Plancherel's theorem, we get that for all $u\in\mathscr S$,
\begin{multline}\label{04122018E11}
	\big\Vert\langle D_x\rangle^{\frac{\lambda_r}2}u\big\Vert^2_{L^2}
	\le 2c_3\Reelle\big\langle Lu,g(D_x)u\big\rangle_{L^2}
	+ c_2\big\Vert\langle QD_x\rangle^{\frac{\lambda_0}2}u\big\Vert^2_{L^2} \\
	+\Tr(B)c_1c_3\sum_{j=0}^{r-1}\big\Vert\langle Q(B^T)^jD_x\rangle^{\frac{q_j}2}\langle D_x\rangle^{\frac{s_j}2}u\big\Vert^2_{L^2}.
\end{multline}
By applying this estimate to the Schwartz function $\langle D_x\rangle^{\frac{\lambda_r}2}u$, we obtain that for all $u\in\mathscr S$,
\begin{multline}\label{28062019E3}
	\big\Vert\langle D_x\rangle^{\lambda_r}u\big\Vert^2_{L^2}
	\le2c_3\Reelle\big\langle L\langle D_x\rangle^{\frac{\lambda_r}2}u,g(D_x)\langle D_x\rangle^{\frac{\lambda_r}2}u\big\rangle_{L^2} \\[5pt]
	+c_2\big\Vert\langle QD_x\rangle^{\frac{\lambda_0}2}\langle D_x\rangle^{\frac{\lambda_r}2}u\big\Vert^2_{L^2}
	+\Tr(B)c_1c_3\sum_{j=0}^{r-1}\big\Vert\langle Q(B^T)^jD_x\rangle^{\frac{q_j}2}\langle D_x\rangle^{\frac{s_j}2}\langle D_x\rangle^{\frac{\lambda_r}2}u\big\Vert^2_{L^2}.
\end{multline}
We need to control the three quantities appearing in the right-hand side of the above estimate. On the one hand, we deduce from Cauchy-Schwarz' inequality that the second and the third one can be controlled in the following way for all $u\in\mathscr S$,
\begin{equation}\label{28062019E4}
	\big\Vert\langle QD_x\rangle^{\frac{\lambda_0}2}\langle D_x\rangle^{\frac{\lambda_r}2}u\big\Vert^2_{L^2}
	\le\big\Vert\langle QD_x\rangle^{\lambda_0}u\big\Vert_{L^2}\Vert u\Vert_{H^{\lambda_r}},
\end{equation}
and for all $0\le j\le r-1$,
\begin{equation}\label{22072019E1}
	\big\Vert\langle Q(B^T)^jD_x\rangle^{\frac{q_j}2}\langle D_x\rangle^{\frac{s_j}2}\langle D_x\rangle^{\frac{\lambda_r}2}u\big\Vert^2_{L^2}\le\big\Vert\langle Q(B^T)^jD_x\rangle^{q_j}u\big\Vert_{H^{s_j}}\Vert u\Vert_{H^{\lambda_r}}.
\end{equation}
On the other hand, we split the other term in two parts for all $u\in\mathscr S$, one of them involving the commutator between the operators $L$ and $\langle D_x\rangle^{\frac{\lambda_r}2}$,
\begin{multline}\label{28062019E6}
	\Reelle\big\langle L\langle D_x\rangle^{\frac{\lambda_r}2}u,g(D_x)\langle D_x\rangle^{\frac{\lambda_r}2}u\big\rangle_{L^2} \\[5pt]
	= \Reelle\big\langle Lu,g(D_x)\langle D_x\rangle^{\lambda_r}u\big\rangle_{L^2}
	+\Reelle\big\langle\big[L,\langle D_x\rangle^{\frac{\lambda_r}2}\big]u,g(D_x)\langle D_x\rangle^{\frac{\lambda_r}2}u\big\rangle_{L^2}.
\end{multline}
We consider each new term one by one. First, we deduce from Cauchy-Schwarz' inequality and \eqref{04122018E4} that the first one is controlled in the following way for all $u\in\mathscr S$,
\begin{align}\label{16072019E1}
	\Reelle\big\langle Lu,g(D_x)\langle D_x\rangle^{\lambda_r}u\big\rangle_{L^2}
	& = \Reelle\big\langle g(D_x)Lu,\langle D_x\rangle^{\lambda_r}u\big\rangle_{L^2} \\
	& \le c_1\sum_{j=0}^{r-1}\big\Vert\langle Q(B^T)^jD_x\rangle^{q_j}Lu\big\Vert_{H^{s_j}}\Vert u\Vert_{H^{\lambda_r}}. \nonumber
\end{align}
We then write the second term in the following way for all $u\in\mathscr S$,
$$\Reelle\big\langle\big[L,\langle D_x\rangle^{\frac{\lambda_r}2}\big]u,g(D_x)\langle D_x\rangle^{\frac{\lambda_r}2}u\big\rangle_{L^2}
=\Reelle\big\langle g(D_x)\langle D_x\rangle^{-\frac{\lambda_r}2}\big[L,\langle D_x\rangle^{\frac{\lambda_r}2}\big]u,\langle D_x\rangle^{\lambda_r}u\big\rangle_{L^2}.$$
Elements of Weyl calculus, for which we refer anew to \cite{MR2304165} (Theorem 18.5.6), and the definition \eqref{05042019E6} of the operator $L$ imply that the Weyl symbol of the commutator in the above estimate is given by the following Poisson bracket
$$\frac1i\big\{Bx\cdot i\xi,\langle\xi\rangle^{\frac{\lambda_r}2}\big\} = -B^T\xi\cdot\nabla_{\xi}\langle\xi\rangle^{\frac{\lambda_r}2}
=-\frac{\lambda_r}2\langle\xi\rangle^{\frac{\lambda_r}2-2}(B^T\xi\cdot\xi).$$
As a consequence, the operator
$$\langle D_x\rangle^{-\frac{\lambda_r}2}\big[L,\langle D_x\rangle^{\frac{\lambda_r}2}\big],$$
is a Fourier multiplier associated with the following bounded symbol
$$-\frac{\lambda_r}2\langle\xi\rangle^{-\frac{\lambda_r}2}\langle\xi\rangle^{\frac{\lambda_r}2-2}(B^T\xi\cdot\xi)
=-\frac{\lambda_r}2\langle\xi\rangle^{-2}(B^T\xi\cdot\xi)\in L^{\infty}(\mathbb R^n).$$
Cauchy-Schwarz' inequality and the estimate \eqref{04122018E4} therefore imply that for all $u\in\mathscr S$,
\begin{equation}\label{16072019E2}
	\Reelle\big\langle\big[L,\langle D_x\rangle^{\frac{\lambda_r}2}\big]u,g(D_x)\langle D_x\rangle^{\frac{\lambda_r}2}u\big\rangle_{L^2}
	\le\frac{c_1\lambda_r}2\sum_{j=0}^{r-1}\big\Vert\langle Q(B^T)^jD_x\rangle^{q_j}Lu\big\Vert_{H^{s_j}}\Vert u\Vert_{H^{\lambda_r}}.
\end{equation}
Gathering the estimates \eqref{28062019E3}, \eqref{28062019E4}, \eqref{22072019E1}, \eqref{28062019E6}, \eqref{16072019E1} and \eqref{16072019E2}, we deduce that there exists a positive constant $c_4>0$ such that for all $u\in\mathscr S$,
\begin{multline}\label{31032020E8}
	\Vert u\Vert^2_{H^{\lambda_r}}\le c_4\Big(
	\big\Vert\langle QD_x\rangle^{\lambda_0}u\big\Vert_{L^2}\Vert u\Vert_{H^{\lambda_r}}
	+ \sum_{j=0}^{r-1}\big\Vert\langle Q(B^T)^jD_x\rangle^{q_j}u\big\Vert_{H^{s_j}}\Vert u\Vert_{H^{\lambda_r}} \\
	+ \sum_{j=0}^{r-1}\big\Vert\langle Q(B^T)^jD_x\rangle^{q_j}Lu\big\Vert_{H^{s_j}}\Vert u\Vert_{H^{\lambda_r}}\Big).
\end{multline}
Finally, since $\lambda_r>q_j+s_j$ from Lemma \ref{10042019L1}, the inequality $\lambda_{r-1}>\lambda_r$ being assumed, Young's inequality and Plancherel's theorem imply that for all $\varepsilon>0$, there exists a positive constant $c_{\varepsilon}>0$ such that for all $0\le j\le r-1$ and $u\in\mathscr S$,
\begin{equation}\label{31032020E7}
	\big\Vert\langle Q(B^T)^jD_x\rangle^{q_j}u\big\Vert_{H^{s_j}}\lesssim\Vert u\Vert_{H^{q_j+s_j}}
	\lesssim\varepsilon\Vert u\Vert_{H^{\lambda_r}} + c_{\varepsilon}\Vert u\Vert_{L^2}.
\end{equation}
Adjusting the value of $\varepsilon>0$, we derive the estimate \eqref{31032020E6} from \eqref{31032020E8} and \eqref{31032020E7}.
This ends the proof of Theorem \ref{18032020T1}.

\section{Construction of the multiplier in the general case}\label{constructionKalman}

The aim of this section is to construct the real-valued multiplier $g\in C^{\infty}(\mathbb R^n)$ we have used in Section \ref{hypoelliptic} to prove Theorem \ref{18032020T1}. This construction, inspired by the work \cite{MR2752935} (Section 4) in which the author obtained global subelliptic estimates for a large class of accretive quadratic differential operators, is performed in the following proposition:

\begin{prop}\label{05122018P1} Let $B$ and $Q$ be real $n\times n$ matrices, with $Q$ symmetric positive semidefinite. We assume that there exist a positive integer $r\geq1$ and an open subset $\Omega_0\subset\mathbb R^n$ such that
\begin{equation}\label{05122018E2}
	\exists c_0>0, \forall\xi\in\Omega_0,\quad\sum_{j=0}^r\vert Q(B^T)^j\xi\vert^2\geq c_0\vert\xi\vert^2.
\end{equation}
Let $\lambda_0>0$, $0\le q_{r-2}\le q_{r-1}\le1$ and $s_{r-1}\geq s_{r-2}\geq0$ be some non-negative real numbers. By setting $q_j = s_j = 0$ when $j\le r-3$, we define recursively the positive real numbers $\lambda_1,\ldots,\lambda_r>0$ by
\begin{equation}\label{22042020E1}
	\forall j\in\{0,\ldots,r-1\},\quad\lambda_{j+1}\bigg(\frac{1-q_j}{\lambda_j}+1\bigg) = 1+s_j.
\end{equation}
When the family $(\lambda_0,\ldots,\lambda_r)$ is decreasing and satisfies 
$$\forall j\in\{2,\ldots,r\},\quad \frac{\lambda_{j-1}}{\lambda_{j-2}}+\frac{\lambda_{j-1}}{\lambda_j}\le 2.$$
when $r\geq2$, there exist some positive constants $c_1,c_2,c_3>0$ and a real-valued symbol $g\in C^{\infty}(\mathbb R^n)$ satisfying the two estimates
\begin{equation}\label{05122018E3}
	\forall\xi\in\Omega_0,\quad \vert g(\xi)\vert\le c_1\sum_{j=0}^{r-1}\langle Q(B^T)^j\xi\rangle^{q_j}\langle\xi\rangle^{s_j},
\end{equation}
and
\begin{equation}\label{28032019E3}
	\forall\xi\in\Omega_0,\quad \langle\xi\rangle^{\lambda_r}\le c_2\langle Q\xi\rangle^{\lambda_0} + c_3B^T\xi\cdot\nabla_{\xi}g(\xi).
\end{equation}
\end{prop}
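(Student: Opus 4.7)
The plan is to generalize the construction performed in Section \ref{particular} for the particular three-level transport operator to the general setting of Proposition \ref{05122018P1}. I would look for the multiplier in the form of a finite sum
\[
g = g_1 + g_2 + \cdots + g_r,
\]
where each $g_k$ is an elementary building block governing the transition between two consecutive ``directional levels'' $Q(B^T)^{k-1}\xi$ and $Q(B^T)^k\xi$. By analogy with the symbols $g_1,g_2$ of Section \ref{particular}, I would set
\[
g_k(\xi) \;=\; \frac{Q(B^T)^{k-1}\xi \cdot Q(B^T)^k\xi}{\bigl|Q(B^T)^k\xi\bigr|^{2-2\lambda_k}}\, \chi_k(\xi),
\]
where $\chi_k$ is a product of cut-off functions of $w$- and $\psi$-type built using the ratios $|Q(B^T)^j\xi|^2/|Q(B^T)^{j-1}\xi|^{2\lambda_{j-1}/\lambda_{j-2}}$ and large parameters $\Gamma_k \gg 1$. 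The role of these cut-offs is to localize $g_k$ to the region where the higher-indexed blocks $g_{k+1},\ldots,g_r$ fail to supply enough coercivity, and, simultaneously, to keep the denominator away from zero.

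Once the building blocks have been defined, I would follow the iterative strategy used in Section \ref{particular}. The computational core is the elementary identity
\[
B^T\xi \cdot \nabla_\xi \bigl(Q(B^T)^{k-1}\xi \cdot Q(B^T)^k\xi\bigr) \;=\; \bigl|Q(B^T)^k\xi\bigr|^2 + Q(B^T)^{k-1}\xi \cdot Q(B^T)^{k+1}\xi,
\]
which, combined with the Cauchy--Schwarz inequality, the homogeneity of the denominator and the support properties of $\chi_k$, yields a recursive pointwise estimate of the form
\[
\bigl\langle Q(B^T)^k\xi\bigr\rangle^{2\lambda_k} \;\lesssim\; \bigl\langle Q(B^T)^{k-1}\xi\bigr\rangle^{2\lambda_{k-1}} + B^T\xi \cdot \nabla_\xi g_k(\xi) + R_k(\xi),
\]
where $R_k$ concentrates the remaining mass in a transition region that the next block $g_{k-1}$ has been designed to tame. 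Iterating from $k=r$ down to $k=1$, and then invoking the Kalman-type bound \eqref{05122018E2} to upgrade $\max_{0\le j \le r}\langle Q(B^T)^j\xi\rangle^{\lambda_j}$ into $\langle\xi\rangle^{\lambda_r}$ on $\Omega_0$ (using that $\lambda_r = \min_j \lambda_j$, as the family $(\lambda_j)$ is decreasing), produces the coercivity \eqref{28032019E3} once the parameters $\Gamma_k$ are chosen inductively large enough, exactly as the single constant $\Gamma$ is tuned at the end of Section \ref{particular}.

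The upper bound \eqref{05122018E3} on each $g_k$ is enforced directly by the choice of cut-offs: on the support of $\chi_k$, the quantity $|Q(B^T)^{k-1}\xi|$ is comparable to a power of $|Q(B^T)^k\xi|$, and the recursion \eqref{22042020E1} is precisely what turns the ratio numerator-over-denominator into the desired $\langle Q(B^T)^{k-1}\xi\rangle^{q_{k-1}}\langle\xi\rangle^{s_{k-1}}$. The convention $q_j = s_j = 0$ for $j \le r-3$ reduces the non-trivial checks to the two top levels.

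The main obstacle will be the book-keeping of the error terms $R_k$ and of the derivatives of the nested cut-offs. When $B^T\xi \cdot \nabla_\xi$ hits $\chi_k$, new terms appear whose sizes are controlled by ratios of the type $|Q(B^T)^{k+1}\xi|^{\lambda_{k+1}/\lambda_k}/|Q(B^T)^k\xi|^{1-\lambda_k/\lambda_{k-1}}$. It is exactly the technical assumption $\lambda_{k-1}/\lambda_{k-2} + \lambda_{k-1}/\lambda_k \le 2$ that guarantees that the overall exponent arising from such errors does not exceed $2\lambda_{k-1}$, hence allowing them to be absorbed into $\langle Q(B^T)^{k-1}\xi\rangle^{2\lambda_{k-1}}$ at the next step of the iteration. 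Calibrating the constants $\Gamma_k$ simultaneously so that each layer contributes an absorbable small factor to the lower layer, while preserving the $L^\infty$ bound on $g_k$, is the delicate technical point of the proof.
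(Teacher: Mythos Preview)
Your overall architecture---a sum of elementary blocks of the form (dot product of consecutive levels)/(directional weight)$\times$cut-offs, the identity $B^T\xi\cdot\nabla_\xi(Q(B^T)^{k-1}\xi\cdot Q(B^T)^k\xi)=|Q(B^T)^k\xi|^2+\cdots$, a cascade with large constants $\Gamma_k$, and the use of the assumption $\lambda_{k-1}/\lambda_{k-2}+\lambda_{k-1}/\lambda_k\le 2$ to close the exponent bookkeeping---matches the paper's approach. But the uniform ansatz
\[
g_k(\xi)=\frac{Q(B^T)^{k-1}\xi\cdot Q(B^T)^k\xi}{|Q(B^T)^k\xi|^{2-2\lambda_k}}\,\chi_k(\xi)
\]
breaks down at the top level $k=r$. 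Differentiating produces a cross term $\dfrac{Q(B^T)^{r-1}\xi\cdot Q(B^T)^{r+1}\xi}{|Q(B^T)^r\xi|^{2-2\lambda_r}}\,\chi_r$, and the only a~priori bound on $Q(B^T)^{r+1}\xi$ is $|Q(B^T)^{r+1}\xi|\lesssim|\xi|$. Since nothing in the Kalman hypothesis forces $|Q(B^T)^r\xi|$ to be comparable to $|\xi|$ (it may well vanish), this error is not absorbable by $|Q(B^T)^r\xi|^{2\lambda_r}$, nor can any cut-off of the ratio type you describe cure it: there is no level ``$r+1$'' to anchor the localisation. This is not a detail of tuning $\Gamma_k$; it is a structural obstruction.

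The paper resolves this by breaking the symmetry at the top. Its first block is
\[
g_r(\xi)=\frac{Q(B^T)^{r-1}\xi\cdot Q(B^T)^r\xi}{\langle\xi\rangle^{2-\lambda_r}}\,\psi\Big(\tfrac{|Q(B^T)^{r-1}\xi|^2}{\langle\xi\rangle^{2\lambda_r/\lambda_{r-1}}}\Big),
\]
with the \emph{full} weight $\langle\xi\rangle$ in the denominator, so that the dangerous $Q(B^T)^{r+1}\xi$ term is harmless. The price is that the coercive term is now $\dfrac{|Q(B^T)^r\xi|^2}{\langle\xi\rangle^{2-\lambda_r}}\,\psi$, which controls $\langle\xi\rangle^{\lambda_r}$ only where $|Q(B^T)^r\xi|\gtrsim|\xi|$. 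The complementary region, where $|Q(B^T)^r\xi|\ll|\xi|$, is handled by an \emph{induction on $r$}: there the Kalman bound \eqref{05122018E2} holds with $r-1$ in place of $r$, so the induction hypothesis furnishes a multiplier $h_{r-1}$, which is glued back via a cut-off in $|Q(B^T)^r\xi|^2/|\xi|^2$. Only after this step does the paper launch the directional cascade you describe (its ``instrumental estimate'', with blocks $\mathfrak p_{k,\varepsilon}$ whose denominators are $|Q(B^T)^{r-k}\xi|^{2-\lambda_{r-k}}$), to kill the remaining term $|Q(B^T)^{r-1}\xi|^{\lambda_{r-1}}w(\cdots)$. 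So the paper's proof has a genuinely two-layered structure---a first multiplier with $\langle\xi\rangle$-denominator plus an outer induction on $r$, followed by your cascade---rather than the single cascade you propose.
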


\begin{proof} Let $B$ and $Q$ be some real $n\times n$ matrices, with $Q$ symmetric positive semidefinite. We shall prove Proposition \ref{05122018P1} by induction on the positive integer $r\geq1$ such that \eqref{05122018E2} holds.
\newline

\noindent\textbf{A first multiplier.} Let $r\geq1$ be a positive integer. Let $\lambda_{r-1}>0$, $0\le q_{r-1}\le1$ and $s_{r-1}\geq0$ be some non-negative real numbers. We define the positive real number $\lambda_r>0$ by the relation
\begin{equation}\label{12122018E9}
	\lambda_r\bigg(\frac{1-q_{r-1}}{\lambda_{r-1}}+1\bigg) = 1+s_{r-1},
\end{equation}
and assume that $\lambda_r<\lambda_{r-1}$. First of all, we begin by studying the symbol $g_r\in C^{\infty}(\mathbb R^n)$ defined for all $\xi\in\mathbb R^n$ by
\begin{equation}\label{05122018E7}
	g_r(\xi) = \frac{Q(B^T)^{r-1}\xi\cdot Q(B^T)^r\xi}{\langle\xi\rangle^{2-\lambda_r}}\ \psi\bigg(\frac{\vert Q(B^T)^{r-1}\xi\vert^2}{\langle\xi\rangle^{2\lambda_r/\lambda_{r-1}}}\bigg),
\end{equation}
where $\psi\in C^{\infty}(\mathbb R,[0,1])$ is  smooth function satisfying 
\begin{equation}\label{23072019E5}
	\text{$\psi = 1$ on $\big\{x\in\mathbb R : \vert x\vert\le1/2\big\}$},\quad \Supp\psi\subset\big\{x\in\mathbb R : \vert x\vert\le1\big\}.
\end{equation}
It follows from \eqref{12122018E9} and Cauchy-Schwarz' inequality that the symbol $g_r$ satisfies the following estimate for all $\xi\in\mathbb R^n$,
\begin{align}\label{12122018E1}
	\langle Q(B^T)^{r-1}\xi\rangle^{-q_{r-1}}\vert g_r(\xi)\vert & \le\frac{\langle Q(B^T)^{r-1}\xi\rangle^{1-q_{r-1}}\langle Q(B^T)^r\xi\rangle}{\langle\xi\rangle^{2-\lambda_r}}
	\bigg\vert\psi\bigg(\frac{\vert Q(B^T)^{r-1}\xi\vert^2}{\langle\xi\rangle^{2\lambda_r/\lambda_{r-1}}}\bigg)\bigg\vert \\
	& \lesssim\frac{\langle\xi\rangle^{\lambda_r(1-q_{r-1})/\lambda_{r-1}+1}}{\langle\xi\rangle^{2-\lambda_r}} \simeq\langle\xi\rangle^{s_{r-1}}. \nonumber
\end{align}
We shall study the derivatives of the symbol $g_r$. By using that for all $\xi\in\mathbb R^n$,
$$B^T\xi\cdot\nabla_{\xi}(Q(B^T)^{r-1}\xi\cdot Q(B^T)^r\xi)
= \vert Q(B^T)^r\xi\vert^2 + Q(B^T)^{r-1}\xi\cdot Q(B^T)^{r+1}\xi,$$
we deduce from the definition \eqref{05122018E7} of $g_r$ and a direct computation that for all $\xi\in\mathbb R^n$,
\begin{equation}\label{05122018E10}
	B^T\xi\cdot\nabla_{\xi}g_r(\xi) = \frac{\vert Q(B^T)^r\xi\vert^2}{\langle\xi\rangle^{2-\lambda_r}}\psi\bigg(\frac{\vert Q(B^T)^{r-1}\xi\vert^2}{\langle\xi\rangle^{2\lambda_r/\lambda_{r-1}}}\bigg) + A_1(\xi) + A_2(\xi) + A_3(\xi),
\end{equation}
where the three terms $A_1(\xi)$, $A_2(\xi)$ and $A_3(\xi)$ are respectively given by
\begin{align}
	& A_1(\xi) = \frac{Q(B^T)^{r-1}\xi\cdot Q(B^T)^{r+1}\xi}{\langle\xi\rangle^{2-\lambda_r}}\ \psi\bigg(\frac{\vert Q(B^T)^{r-1}\xi\vert^2}{\langle\xi\rangle^{2\lambda_r/\lambda_{r-1}}}\bigg),\label{12122018E2} \\[5pt]
	& A_2(\xi) = (Q(B^T)^{r-1}\xi\cdot Q(B^T)^r\xi)\big(B^T\xi\cdot\nabla_{\xi}\langle\xi\rangle^{-2+\lambda_r}\big)\psi\bigg(\frac{\vert Q(B^T)^{r-1}\xi\vert^2}{\langle\xi\rangle^{2\lambda_r/\lambda_{r-1}}}\bigg), \label{12122018E3}
\end{align}
and
\begin{equation}
	A_3(\xi) = \frac{Q(B^T)^{r-1}\xi\cdot Q(B^T)^r\xi}{\langle\xi\rangle^{2-\lambda_r}}B^T\xi\cdot\nabla_{\xi}\bigg(\psi\bigg(\frac{\vert Q(B^T)^{r-1}\xi\vert^2}{\langle\xi\rangle^{2\lambda_r/\lambda_{r-1}}}\bigg)\bigg). \label{12122018E4}
\end{equation}
We will focus on each term. First, notice that the two first ones $A_1(\xi)$ and $A_2(\xi)$ satisfy the following estimate:
\begin{equation}\label{05122018E11}
	\forall\xi\in\mathbb R^n,\quad \vert A_1(\xi)\vert + \vert A_2(\xi)\vert\lesssim\langle Q(B^T)^{r-1}\xi\rangle^{q_{r-1}}\langle\xi\rangle^{s_{r-1}}.
\end{equation}
Indeed, on the one hand, we deduce from \eqref{12122018E9}, \eqref{12122018E2} and Cauchy-Schwarz' inequality that for all $\xi\in\mathbb R^n$,
\begin{align*}
	\langle Q(B^T)^{r-1}\xi\rangle^{-q_{r-1}}\vert A_1(\xi)\vert & \le\frac{\langle Q(B^T)^{r-1}\xi\rangle^{1-q_{r-1}}\langle Q(B^T)^{r+1}\xi\rangle}{\langle\xi\rangle^{2-\lambda_r}}
\bigg\vert\psi\bigg(\frac{\vert Q(B^T)^{r-1}\xi\vert^2}{\langle\xi\rangle^{2\lambda_r/\lambda_{r-1}}}\bigg)\bigg\vert \\
	& \lesssim\frac{\langle\xi\rangle^{\lambda_r(1-q_{r-1})/\lambda_{r-1}+1}}{\langle\xi\rangle^{2-\lambda_r}} \simeq \langle\xi\rangle^{s_{r-1}},
\end{align*}
since the function $\psi$ is supported in $[-1,1]$. On the other hand, we get from a direct computation that for all $\xi\in\mathbb R^n$,
$$\big\vert B^T\xi\cdot\nabla_{\xi}\langle\xi\rangle^{-2+\lambda_r}\big\vert
= \big\vert(-2+\lambda_r)\langle\xi\rangle^{-4+\lambda_r}(B^T\xi\cdot\xi)\big\vert
\lesssim\langle\xi\rangle^{-2+\lambda_r},$$
and the same ingredients imply that for all $\xi\in\mathbb R^n$,
$$\langle Q(B^T)^{r-1}\xi\rangle^{-q_{r-1}}\vert A_2(\xi)\vert\lesssim\frac{\langle Q(B^T)^{r-1}\xi\rangle^{1-q_{r-1}}\langle Q(B^T)^r\xi\rangle}{\langle\xi\rangle^{2-\lambda_r}}
\bigg\vert\psi\bigg(\frac{\vert Q(B^T)^{r-1}\xi\vert^2}{\langle\xi\rangle^{2\lambda_r/\lambda_{r-1}}}\bigg)\bigg\vert\lesssim\langle\xi\rangle^{s_{r-1}}.$$
This proves that \eqref{05122018E11} holds. Moreover, it follows from Cauchy-Schwarz' inequality and Lemma \ref{29032019L1} that for all $\xi\in\mathbb R^n$,
\begin{align}\label{12122018E7}
	\vert A_3(\xi)\vert & \lesssim\frac{\vert Q(B^T)^{r-1}\xi\vert\vert Q(B^T)^r\xi\vert}{\langle\xi\rangle^{2-\lambda_r}}\langle\xi\rangle^{1-\lambda_r/\lambda_{r-1}}
	\bigg\vert\psi'\bigg(\frac{\vert Q(B^T)^{r-1}\xi\vert^2}{\langle\xi\rangle^{2\lambda_r/\lambda_{r-1}}}\bigg)\bigg\vert \\
	& \lesssim \frac{\langle\xi\rangle^{\lambda_r/\lambda_{r-1}+1}\langle\xi\rangle^{1-\lambda_r/\lambda_{r-1}}}{\langle\xi\rangle^{2-\lambda_r}}
	\bigg\vert\psi'\bigg(\frac{\vert Q(B^T)^{r-1}\xi\vert^2}{\langle\xi\rangle^{2\lambda_r/\lambda_{r-1}}}\bigg)\bigg\vert
	= \langle\xi\rangle^{\lambda_r}\bigg\vert\psi'\bigg(\frac{\vert Q(B^T)^{r-1}\xi\vert^2}{\langle\xi\rangle^{2\lambda_r/\lambda_{r-1}}}\bigg)\bigg\vert \nonumber \\
	& \lesssim\vert Q(B^T)^{r-1}\xi\vert^{\lambda_{r-1}}\bigg\vert\psi'\bigg(\frac{\vert Q(B^T)^{r-1}\xi\vert^2}{\langle\xi\rangle^{2\lambda_r/\lambda_{r-1}}}\bigg)\bigg\vert. \nonumber
\end{align}
We therefore deduce from \eqref{05122018E10}, \eqref{05122018E11} and \eqref{12122018E7} that for all $\xi\in\mathbb R^n$,
\begin{multline}\label{05122018E13}
	\frac{\vert Q(B^T)^r\xi\vert^2}{\langle\xi\rangle^{2-\lambda_r}}\psi\bigg(\frac{\vert Q(B^T)^{r-1}\xi\vert^2}{\langle\xi\rangle^{2\lambda_r/\lambda_{r-1}}}\bigg)\lesssim
	\vert Q(B^T)^{r-1}\xi\vert^{\lambda_{r-1}}\bigg\vert\psi'\bigg(\frac{\vert Q(B^T)^{r-1}\xi\vert^2}{\langle\xi\rangle^{2\lambda_r/\lambda_{r-1}}}\bigg)\bigg\vert \\
	+ \langle Q(B^T)^{r-1}\xi\rangle^{q_{r-1}}\langle\xi\rangle^{s_{r-1}} + B^T\xi\cdot\nabla_{\xi}g_r(\xi).
\end{multline}

\medskip

\noindent\textbf{Basic case.} We assume that there exists an open subset $\Omega_0\subset\mathbb R^n$ such that the estimate \eqref{05122018E2} holds with $r=1$. Keeping the notations introduced in the previous step, we also assume that $\lambda_0$ and $\lambda_1$ satisfy $\lambda_0<\lambda_1$. We aim first at proving that for all $\xi\in\Omega_0$,
\begin{equation}\label{03042019E5}
	\langle\xi\rangle^{\lambda_1}\lesssim\langle Q\xi\rangle^{\lambda_0} + \langle Q\xi\rangle^{q_0}\langle\xi\rangle^{s_0} + B^T\xi\cdot\nabla_{\xi}g_1(\xi).
\end{equation}
When $r=1$, the inequality \eqref{05122018E13} writes for all $\xi\in\mathbb R^n$ as
$$\frac{\vert QB^T\xi\vert^2}{\langle\xi\rangle^{2-\lambda_1}}\psi\bigg(\frac{\vert Q\xi\vert^2}{\langle\xi\rangle^{2\lambda_1/\lambda_0}}\bigg)
\lesssim\vert Q\xi\vert^{\lambda_0}\bigg\vert\psi'\bigg(\frac{\vert Q\xi\vert^2}{\langle\xi\rangle^{2\lambda_1/\lambda_0}}\bigg)\bigg\vert + \langle Q\xi\rangle^{q_0}\langle\xi\rangle^{s_0} + B^T\xi\cdot\nabla_{\xi}g_1(\xi).$$
Since the function $\psi'$ is bounded, we deduce that for all $\xi\in\mathbb R^n$,
\begin{equation}\label{13122018E1}
	\frac{\vert QB^T\xi\vert^2}{\langle\xi\rangle^{2-\lambda_1}}\psi\bigg(\frac{\vert Q\xi\vert^2}{\langle\xi\rangle^{2\lambda_1/\lambda_0}}\bigg)
	\lesssim\vert Q\xi\vert^{\lambda_0} + \langle Q\xi\rangle^{q_0}\langle\xi\rangle^{s_0} + B^T\xi\cdot\nabla_{\xi}g_1(\xi).
\end{equation}
We consider a function $w\in C^{\infty}(\mathbb R,[0,1])$ satisfying
\begin{equation}\label{23072019E4}
	\text{$w = 1$ on $\big\{x\in\mathbb R : \vert x\vert\geq1/2\big\}$},\quad \Supp w\subset\big\{x\in\mathbb R : \vert x\vert\geq1/4\big\}.
\end{equation}
By definition of the function $w$, we notice that for all $\xi\in\mathbb R^n$,
\begin{equation}\label{13122018E2}
	\langle\xi\rangle^{\lambda_1}w\bigg(\frac{\vert Q\xi\vert^2}{\langle\xi\rangle^{2\lambda_1/\lambda_0}}\bigg)\lesssim\vert Q\xi\vert^{\lambda_0}.
\end{equation}
By summing the estimates \eqref{13122018E1} and \eqref{13122018E2}, we deduce that for all $\xi\in\mathbb R^n$,
\begin{equation}\label{05122018E16}
	\frac{\vert QB^T\xi\vert^2}{\langle\xi\rangle^{2-\lambda_1}}\psi\bigg(\frac{\vert Q\xi\vert^2}{\langle\xi\rangle^{2\lambda_1/\lambda_0}}\bigg)
	+\langle\xi\rangle^{\lambda_1}w\bigg(\frac{\vert Q\xi\vert^2}{\langle\xi\rangle^{2\lambda_1/\lambda_0}}\bigg)
	\lesssim\langle Q\xi\rangle^{\lambda_0} + \langle Q\xi\rangle^{q_0}\langle\xi\rangle^{s_0} + B^T\xi\cdot\nabla_{\xi}g_1(\xi).
\end{equation}
Let $\xi\in\Omega_0$ satisfying $\vert\xi\vert\geq r_0$, where $r_0\geq1$ is a positive constant whose value will adjusted later. We will distinguish two regions in $\Omega_0$ and we first assume that
\begin{equation}\label{13122018E3}
	\frac{\vert Q\xi\vert^2}{\langle\xi\rangle^{2\lambda_1/\lambda_0}}\le\frac12.
\end{equation}
By using \eqref{05122018E2}, \eqref{13122018E3} and the fact that $\vert\xi\vert\geq r_0$ with $r_0\geq1$, we obtain that
$$\vert QB^T\xi\vert^2\geq c_0\vert\xi\vert^2-\vert Q\xi\vert^2\geq\frac{c_0r^2_0}{1+r^2_0}\langle\xi\rangle^2-\frac12\langle\xi\rangle^{2\lambda_1/\lambda_0}
\geq\frac{c_0}2\langle\xi\rangle^2-\frac12\langle\xi\rangle^{2\lambda_1/\lambda_0}.$$
Let us recall that $\lambda_1>\lambda_0$ by assumption. We can therefore choose $r_0\gg1$ large enough so that $\vert QB^T\xi\vert^2\gtrsim\langle\xi\rangle^2$. As a consequence, the estimate \eqref{05122018E16} becomes
$$\langle\xi\rangle^{\lambda_1}\lesssim\frac{\vert QB^T\xi\vert^2}{\langle\xi\rangle^{2-\lambda_1}}
\lesssim\langle Q\xi\rangle^{\lambda_0} + \langle Q\xi\rangle^{q_0}\langle\xi\rangle^{s_0} + B^T\xi\cdot\nabla_{\xi}g_1(\xi),$$
since
$$\psi\bigg(\frac{\vert Q\xi\vert^2}{\langle\xi\rangle^{2\lambda_1/\lambda_0}}\bigg)=1\quad\text{and}\quad\langle\xi\rangle^{\lambda_1}w\bigg(\frac{\vert Q\xi\vert^2}{\langle\xi\rangle^{2\lambda_1/\lambda_0}}\bigg)\geq0,$$
from the definition \eqref{23072019E5} of the function $\psi$ and \eqref{13122018E3}. Then, we assume that $\xi\in\Omega_0$ satisfies
\begin{equation}\label{13122018E4}
	\frac{\vert Q\xi\vert^2}{\langle\xi\rangle^{2\lambda_1/\lambda_0}}\geq\frac12.
\end{equation}
In this case,
$$\frac{\vert QB^T\xi\vert^2}{\langle\xi\rangle^{2-\lambda_1}}\psi\bigg(\frac{\vert Q\xi\vert^2}{\langle\xi\rangle^{2\lambda_1/\lambda_0}}\bigg)\geq0\quad \text{and}\quad w\bigg(\frac{\vert Q\xi\vert^2}{\langle\xi\rangle^{2\lambda_1/\lambda_0}}\bigg)=1,$$
from the definition \eqref{23072019E4} of the function $w$, and we obtain from \eqref{05122018E16} anew that
$$\langle\xi\rangle^{\lambda_1}\lesssim\langle Q\xi\rangle^{\lambda_0} + \langle Q\xi\rangle^{q_0}\langle\xi\rangle^{s_0} + B^T\xi\cdot\nabla_{\xi}g_1(\xi).$$
We proved that \eqref{03042019E5} holds when $\vert\xi\vert\geq r_0$. Since all the functions involved in this estimate are continuous, it is also valid when $\vert\xi\vert\le r_0$ from a compactness argument, up to increasing all the constants. Moreover, the assumption $\lambda_1>\lambda_0$ and a straightforward calculus show that $\lambda_1>q_0+s_0$, and Young's inequality implies that for all $\varepsilon>0$, there exists a positive constant $c_{\varepsilon}>0$ such that for all $\xi\in\mathbb R^n$,
$$\langle Q\xi\rangle^{q_0}\langle\xi\rangle^{s_0}\lesssim\langle\xi\rangle^{q_0+s_0}\lesssim\varepsilon\langle\xi\rangle^{\lambda_1} + c_{\varepsilon}.$$
Adjusting the value of $\varepsilon>0$, we deduce that for all $\xi\in\Omega_0$,
$$\langle\xi\rangle^{\lambda_1}\lesssim\langle Q\xi\rangle^{\lambda_0} + B^T\xi\cdot\nabla_{\xi}g_1(\xi).$$
This ends the proof of Proposition \ref{05122018P1} in the case when $r=1$, and it proves the induction hypothesis in the basic case.
\newline

\noindent\textbf{Induction.} We now consider $r\geq2$ a positive integer and assume that Proposition \ref{05122018P1} is proven for $r-1$. We also assume that there exists an open subset $\Omega_0\subset\mathbb R^n$ such that the estimate \eqref{05122018E2} holds. Let $\lambda_0>0$, $0\le q_{r-2}\le q_{r-1}\le1$ and $s_{r-1}\geq s_{r-2}\geq0$ be some non-negative real numbers. By setting $q_j = s_j = 0$ when $j\le r-3$, we define recursively the positive real numbers $\lambda_1,\ldots,\lambda_r>0$ by \eqref{22042020E1}. Let us assume that the family $(\lambda_0,\ldots,\lambda_r)$ is decreasing and satisfies \eqref{15042020E1}. The purpose is to obtain the existence of a smooth function $w\in C^{\infty}(\mathbb R,[0,1])$ with similar properties as the one defined in \eqref{23072019E4}, with possibly different numerical values for its support localisation, and a smooth real-valued symbol $G_r\in C^{\infty}(\mathbb R^n)$ satisfying 
\begin{equation}\label{26062020E1}
	\forall\xi\in\Omega_0,\quad \vert G_r(\xi)\vert\lesssim\langle Q(B^T)^{r-2}\xi\rangle^{q_{r-2}}\langle\xi\rangle^{s_{r-2}} + \langle Q(B^T)^{r-1}\xi\rangle^{q_{r-1}}\langle\xi\rangle^{s_{r-1}},
\end{equation}
and such that for all $\xi\in\Omega_0$,
\begin{multline}\label{23072019E6}
	\langle\xi\rangle^{\lambda_r}
	\lesssim\vert Q(B^T)^{r-1}\xi\vert^{\lambda_{r-1}}w\bigg(\frac{\vert Q(B^T)^{r-1}\xi\vert^2}{\langle\xi\rangle^{2\lambda_r/\lambda_{r-1}}}\bigg) + \langle Q\xi\rangle^{\lambda_0} \\[5pt]
	+ \langle Q(B^T)^{r-2}\xi\rangle^{q_{r-2}}\langle\xi\rangle^{s_{r-2}} + \langle Q(B^T)^{r-1}\xi\rangle^{q_{r-1}}\langle\xi\rangle^{s_{r-1}} + B^T\xi\cdot\nabla_{\xi}G_r(\xi).
\end{multline}
This estimate combined with \eqref{23042020E2} and \eqref{23042020E1} (proven in the following step, a little further) ends the proof of Theorem \ref{05122018P1}. Indeed, given $\varepsilon>0$, it follows from these two estimates that there exists a smooth real-valued symbol $p_{r,\varepsilon}\in C^{\infty}(\mathbb R^n)$ such that for all $\xi\in\mathbb R^n$,
$$\vert p_{r,\varepsilon}(\xi)\vert\lesssim\langle Q(B^T)^{r-2}\xi\rangle^{q_{r-2}}\langle\xi\rangle^{s_{r-2}},$$
and
$$\vert Q(B^T)^{r-1}\xi\vert^{\lambda_{r-1}}w\bigg(\frac{\vert Q(B^T)^{r-1}\xi\vert^2}{\langle\xi\rangle^{2\lambda_r/\lambda_{r-1}}}\bigg)
\lesssim\langle Q\xi\rangle^{\lambda_0} + \varepsilon\langle\xi\rangle^{\lambda_r} + B^T\xi\cdot\nabla_{\xi}p_{r,\varepsilon}(\xi).$$
Adjusting the value of $\varepsilon>0$ and setting $g = G_r + p_{r,\varepsilon}$, which is a regular real-valued symbol satisfying \eqref{05122018E3}, we obtain that for all $\xi\in\Omega_0$,  
$$\langle\xi\rangle^{\lambda_r}\lesssim\langle Q\xi\rangle^{\lambda_0} + \langle Q(B^T)^{r-2}\xi\rangle^{q_{r-2}}\langle\xi\rangle^{s_{r-2}} + \langle Q(B^T)^{r-1}\xi\rangle^{q_{r-1}}\langle\xi\rangle^{s_{r-1}} + B^T\xi\cdot\nabla_{\xi}g(\xi).$$
Since we assumed that $\lambda_{r-1}>\lambda_r$, the estimates $\lambda_r>q_{r-1}+s_{r-1}\geq q_{r-2}+s_{r-2}$ are valid from Lemma \ref{10042019L1}, and Young's inequality implies that for all $\eta>0$, there exists a positive constant $c_{\eta}>0$ such that for all $\xi\in\mathbb R^n$,
\begin{multline*}
	\langle Q(B^T)^{r-2}\xi\rangle^{q_{r-2}}\langle\xi\rangle^{s_{r-2}} + \langle Q(B^T)^{r-1}\xi\rangle^{q_{r-1}}\langle\xi\rangle^{s_{r-1}} \\[5pt]
	\lesssim\langle\xi\rangle^{q_{r-2}+s_{r-2}} + \langle\xi\rangle^{q_{r-1}+s_{r-1}} 
	\lesssim\eta\langle\xi\rangle^{\lambda_r} + c_{\eta},
\end{multline*}
Adjusting the value of $\eta>0$, Plancherel's theorem then implies that for all $\xi\in\Omega_0$,
$$\langle\xi\rangle^{\lambda_r}\lesssim \langle Q\xi\rangle^{\lambda_0} + B^T\xi\cdot\nabla_{\xi}g(\xi),$$
which is the required estimate.

To establish \eqref{23072019E6}, we distinguish two cases. On the one hand, in the situation where
\begin{equation}\label{09042019E10}
	\forall c_1\in(0,c_0),\forall\xi\in\Omega_0,\quad \vert Q(B^T)^r\xi\vert^2\geq c_1\vert\xi\vert^2,
\end{equation}
where $c_0>0$ denotes the positive constant appearing in \eqref{05122018E2}, we get from \eqref{05122018E13} that for all $\xi\in\Omega_0$ satisfying $\vert\xi\vert\geq1$,
\begin{multline}\label{23072019E8}
	\langle\xi\rangle^{\lambda_r}\psi\bigg(\frac{\vert Q(B^T)^{r-1}\xi\vert^2}{\langle\xi\rangle^{2\lambda_r/\lambda_{r-1}}}\bigg)
	\lesssim\frac{\vert Q(B^T)^r\xi\vert^2}{\langle\xi\rangle^{2-\lambda_r}}\psi\bigg(\frac{\vert Q(B^T)^{r-1}\xi\vert^2}{\langle\xi\rangle^{2\lambda_r/\lambda_{r-1}}}\bigg) \\[5pt]
	\lesssim\vert Q(B^T)^{r-1}\xi\vert^{\lambda_{r-1}}\bigg\vert \psi'\bigg(\frac{\vert Q(B^T)^{r-1}\xi\vert^2}{\langle\xi\rangle^{2\lambda_r/\lambda_{r-1}}}\bigg)\bigg\vert
	+ \langle Q(B^T)^{r-1}\xi\rangle^{q_{r-1}}\langle\xi\rangle^{s_{r-1}} + B^T\xi\cdot\nabla_{\xi}g_r(\xi).
\end{multline}
Then, observing from \eqref{23072019E5} that the function $1-\psi$ is supported in $(-\infty,-1/2]\cup[1/2,+\infty)$, we obtain the following estimate for all $\xi\in\Omega_0$ satisfying $\vert\xi\vert\geq1$, 
\begin{align}\label{23072019E7}
	\langle\xi\rangle^{\lambda_r} & = \langle\xi\rangle^{\lambda_r}\psi\bigg(\frac{\vert Q(B^T)^{r-1}\xi\vert^2}{\langle\xi\rangle^{2\lambda_r/\lambda_{r-1}}}\bigg)
	+\langle\xi\rangle^{\lambda_r}(1-\psi)\bigg(\frac{\vert Q(B^T)^{r-1}\xi\vert^2}{\langle\xi\rangle^{2\lambda_r/\lambda_{r-1}}}\bigg) \\[5pt]
	& \lesssim \langle\xi\rangle^{\lambda_r}\psi\bigg(\frac{\vert Q(B^T)^{r-1}\xi\vert^2}{\langle\xi\rangle^{2\lambda_r/\lambda_{r-1}}}\bigg)
	+\vert Q(B^T)^{r-1}\xi\vert^{\lambda_{r-1}}(1-\psi)\bigg(\frac{\vert Q(B^T)^{r-1}\xi\vert^2}{\langle\xi\rangle^{2\lambda_r/\lambda_{r-1}}}\bigg). \nonumber
\end{align}
Considering a function $w\in C^{\infty}(\mathbb R,[0,1])$ with similar properties as the one defined in \eqref{23072019E4}, with possibly different numerical values for its support localisation, such that $\vert\psi'\vert,1-\psi\le w$, which is possible in view of \eqref{23072019E5}, we deduce from \eqref{23072019E8} and \eqref{23072019E7} that for all $\xi\in\Omega_0$ satisfying $\vert\xi\vert\geq1$,
\begin{equation}\label{23072019E23}
	\langle\xi\rangle^{\lambda_r}\lesssim\vert Q(B^T)^{r-1}\xi\vert^{\lambda_{r-1}}w\bigg(\frac{\vert Q(B^T)^{r-1}\xi\vert^2}{\langle\xi\rangle^{2\lambda_r/\lambda_{r-1}}}\bigg) + \langle Q(B^T)^{r-1}\xi\rangle^{q_{r-1}}\langle\xi\rangle^{s_{r-1}} + B^T\xi\cdot\nabla_{\xi}g_r(\xi).
\end{equation}
This estimate can be extended to all $\xi\in\Omega_0$ satisfying $\vert\xi\vert\le1$, all the functions involved being continuous. Since the estimate \eqref{12122018E1} holds, this proves \eqref{26062020E1} and \eqref{23072019E6}.

On the other hand, when the estimate \eqref{09042019E10} does not hold, we may find some positive constants $c_1, c_2>0$, such that the estimate
\begin{equation}\label{05122018E20}
	\sum_{j=0}^{r-1}\vert Q(B^T)^j\xi\vert^2\geq c_2\vert\xi\vert^2,
\end{equation}
holds on the non-empty open set
\begin{equation}\label{06122018E1}
	\Omega_1 = \big\{\xi\in\mathbb R^n : \vert Q(B^T)^r\xi\vert^2<c_1\vert\xi\vert^2\big\}\cap\Omega_0.
\end{equation}
We deduce from the induction hypothesis that there exists a smooth real-valued symbol $h_{r-1}\in C^{\infty}(\mathbb R^n)$ such that
\begin{equation}\label{06122018E2}
	\forall\xi\in\Omega_1,\quad \vert h_{r-1}(\xi)\vert\lesssim\langle Q(B^T)^{r-2}\xi\rangle^{q_{r-2}}\langle\xi\rangle^{s_{r-2}},
\end{equation}
since $q_j = s_j = 0$ when $j\le r-3$, and satisfying that for all $\xi\in\Omega_1$, 
\begin{equation}\label{06122018E3}
	\langle\xi\rangle^{\lambda_{r-1}}\lesssim\langle Q\xi\rangle^{\lambda_0}+B^T\xi\cdot\nabla_{\xi}h_{r-1}(\xi).
\end{equation}
We choose $\psi_0$ and $w_0$ some $C^{\infty}(\mathbb R,[0,1])$ functions satisfying similar properties as the functions defined in \eqref{23072019E5} and \eqref{23072019E4} respectively, with possibly different positive numerical values for their support localizations, such that
\begin{equation}\label{06122018E4}
	\Supp\bigg(\psi_0\bigg(\frac{\vert Q(B^T)^r\cdot\vert^2}{\vert\cdot\vert^2}\bigg)w_0(\vert\cdot\vert^2)\bigg)\subset\big\{\xi\in\mathbb R^n : \vert Q(B^T)^r\xi\vert^2<c_1\vert\xi\vert^2\big\}.
\end{equation}
It follows from \eqref{06122018E3} that for all $\xi\in\Omega_0$,
$$\psi_0\bigg(\frac{\vert Q(B^T)^r\xi\vert^2}{\vert\xi\vert^2}\bigg)w_0(\vert\xi\vert^2)\langle\xi\rangle^{\lambda_{r-1}}
\lesssim\psi_0\bigg(\frac{\vert Q(B^T)^r\xi\vert^2}{\vert\xi\vert^2}\bigg)w_0(\vert\xi\vert^2)\big(\langle Q\xi\rangle^{\lambda_0} + B^T\xi\cdot\nabla_{\xi}h_{r-1}(\xi)\big).$$
Since the functions $\psi_0$ and $w_0$ are bounded, this inequality shows that for all $\xi\in\Omega_0$,
\begin{equation}\label{13122018E10}
	\psi_0\bigg(\frac{\vert Q(B^T)^r\xi\vert^2}{\vert\xi\vert^2}\bigg)w_0(\vert\xi\vert^2)\langle\xi\rangle^{\lambda_{r-1}} \lesssim
	\langle Q\xi\rangle^{\lambda_0} + \psi_0\bigg(\frac{\vert Q(B^T)^r\xi\vert^2}{\vert\xi\vert^2}\bigg)w_0(\vert\xi\vert^2)B^T\xi\cdot\nabla_{\xi}h_{r-1}(\xi).
\end{equation}
We consider the symbol $\tilde g_{r-1}$ defined for all $\xi\in\Omega_0$ by
\begin{equation}\label{13122018E7}
	\tilde g_{r-1}(\xi) = \psi_0\bigg(\frac{\vert Q(B^T)^r\xi\vert^2}{\vert\xi\vert^2}\bigg)w_0(\vert\xi\vert^2)h_{r-1}(\xi).
\end{equation}
Notice from \eqref{06122018E2} and by choice of the functions $\psi_0$ and $w_0$ that the symbol $\tilde g_{r-1}$ satisfies
\begin{equation}\label{23072019E21}
	\forall\xi\in\Omega_0,\quad\vert\tilde g_{r-1}(\xi)\vert\lesssim\langle Q(B^T)^{r-2}\xi\rangle^{q_{r-2}}\langle\xi\rangle^{s_{r-2}}.
\end{equation}
In order to reformulate the estimate \eqref{13122018E10} in term of the symbol $\tilde g_{r-1}$ instead of $h_{r-1}$, we compute its derivatives. We first notice from Leibniz' formula that for all $\xi\in\Omega_0$,
\begin{multline*}
	B^T\xi\cdot\nabla_{\xi}\tilde g_{r-1}(\xi) = B^T\xi\cdot\nabla_{\xi}\bigg(\psi_0\bigg(\frac{\vert Q(B^T)^r\xi\vert^2}{\vert\xi\vert^2}\bigg)w_0(\vert\xi\vert^2)\bigg)h_{r-1}(\xi) \\[5pt]
	+\psi_0\bigg(\frac{\vert Q(B^T)^r\xi\vert^2}{\vert\xi\vert^2}\bigg)w_0(\vert\xi\vert^2)B^T\xi\cdot\nabla_{\xi}h_{r-1}(\xi).
\end{multline*}
Let us check that
\begin{equation}\label{23072019E10}
	B^T\xi\cdot\nabla_{\xi}\bigg(\psi_0\bigg(\frac{\vert Q(B^T)^r\xi\vert^2}{\vert\xi\vert^2}\bigg)w_0(\vert\xi\vert^2)\bigg)\in L^{\infty}(\mathbb R^n).
\end{equation}
We directly compute that for all $\xi\in\mathbb R^n$,
\begin{align*}
	&\ B^T\xi\cdot\nabla_{\xi}\bigg(\psi_0\bigg(\frac{\vert Q(B^T)^r\xi\vert^2}{\vert\xi\vert^2}\bigg)\bigg)w_0(\vert\xi\vert^2) \\[5pt]
	= &\ \bigg(\frac{B^T\xi\cdot\nabla_{\xi}\vert Q(B^T)^r\xi\vert^2}{\vert\xi\vert^2}+\vert Q(B^T)^r\xi\vert^2B^T\xi\cdot\nabla_{\xi}\vert\xi\vert^{-2}\bigg)\psi'_0\bigg(\frac{\vert Q(B^T)^r\xi\vert^2}{\vert\xi\vert^2}\bigg)w_0(\vert\xi\vert^2) \\[5pt]
	= &\ \bigg(\frac{2Q(B^T)^{r+1}\xi\cdot Q(B^T)^r\xi}{\vert\xi\vert^2}-\frac{2\vert Q(B^T)^r\xi\vert^2B^T\xi\cdot\xi}{\vert\xi\vert^4}\bigg)\psi'_0\bigg(\frac{\vert Q(B^T)^r\xi\vert^2}{\vert\xi\vert^2}\bigg)w_0(\vert\xi\vert^2)\in L^{\infty}(\mathbb R^n).
\end{align*}
On the other hand, since the function $w'_0$ is compactly supported from \eqref{23072019E4}, we notice that for all $\xi\in\mathbb R^n$,
$$\psi_0\bigg(\frac{\vert Q(B^T)^r\xi\vert^2}{\vert\xi\vert^2}\bigg)B^T\xi\cdot\nabla_{\xi}w_0(\vert\xi\vert^2)
= \psi_0\bigg(\frac{\vert Q(B^T)^r\xi\vert^2}{\vert\xi\vert^2}\bigg)2(B^T\xi\cdot\xi) w'_0(\vert\xi\vert^2)\in L^{\infty}(\mathbb R^n).$$
These two estimates and Leibniz' formula imply that \eqref{23072019E10} holds. Since the symbol $h_{r-1}$ satisfies the estimate \eqref{06122018E2}, it follows from \eqref{13122018E10} and the definition \eqref{13122018E7} of the symbol $\tilde g_{r-1}$ that for all $\xi\in\Omega_0$,
\begin{equation}\label{09042019E16}
	\langle\xi\rangle^{\lambda_{r-1}}\psi_0\bigg(\frac{\vert Q(B^T)^r\xi\vert^2}{\vert\xi\vert^2}\bigg)w_0(\vert\xi\vert^2) \\ 
	\lesssim\langle Q\xi\rangle^{\lambda_0} + \langle Q(B^T)^{r-2}\xi\rangle^{q_{r-2}}\langle\xi\rangle^{s_{r-2}} + B^T\xi\cdot\nabla_{\xi}\tilde g_{r-1}(\xi).
\end{equation}
Let $G_r$ be the smooth real-valued symbol defined by $G_r = g_r + \tilde g_{r-1}$. Notice from \eqref{12122018E1} and \eqref{23072019E21} that for all $\xi\in\Omega_0$,
\begin{equation}
	\vert G_r(\xi)\vert\lesssim\langle Q(B^T)^{r-2}\xi\rangle^{q_{r-2}}\langle\xi\rangle^{s_{r-2}} + \langle Q(B^T)^{r-1}\xi\rangle^{q_{r-1}}\langle\xi\rangle^{s_{r-1}}.
\end{equation}
By summing the estimates \eqref{05122018E13} and \eqref{09042019E16}, we deduce that for all $\xi\in\Omega_0$,
\begin{multline}\label{06122018E7}
	\frac{\vert Q(B^T)^r\xi\vert^2}{\langle\xi\rangle^{2-\lambda_r}}\psi\bigg(\frac{\vert Q(B^T)^{r-1}\xi\vert^2}{\langle\xi\rangle^{2\lambda_r/\lambda_{r-1}}}\bigg) 
	+ \langle\xi\rangle^{\lambda_{r-1}}\psi_0\bigg(\frac{\vert Q(B^T)^r\xi\vert^2}{\vert\xi\vert^2}\bigg)w_0(\vert\xi\vert^2) \\[5pt]
	\lesssim \vert Q(B^T)^{r-1}\xi\vert^{\lambda_{r-1}}\bigg\vert\psi'\bigg(\frac{\vert Q(B^T)^{r-1}\xi\vert^2}{\langle\xi\rangle^{2\lambda_r/\lambda_{r-1}}}\bigg)\bigg\vert + \langle Q\xi\rangle^{\lambda_0} + \langle Q(B^T)^{r-2}\xi\rangle^{q_{r-2}}\langle\xi\rangle^{s_{r-2}} \\[5pt]
	+ \langle Q(B^T)^{r-1}\xi\rangle^{q_{r-1}}\langle\xi\rangle^{s_{r-1}} + B^T\xi\cdot\nabla_{\xi}G_r(\xi).
\end{multline}
We can now tackle the proof of the estimate \eqref{23072019E6} itself. We need to be more precise concerning the support localization of the functions $\psi_0$ and $w_0$. Let $\sigma_0,\sigma_1>0$ be two positive constants, with $s_0<c_1$, satisfying that
\begin{equation}\label{23072019E22}
	\text{$\psi_0=1$ on $\big\{x\in\mathbb R : \vert x\vert\le\sigma_0\big\}$}\quad\text{and}\quad\text{$w_0=1$ on $\big\{x\in\mathbb R : \vert x\vert\geq\sigma_1\big\}$.}
\end{equation}
Let $\xi\in\Omega_0$ satisfying $\vert\xi\vert\geq\sigma_1$. As in the basic case, we need to distinguish two regions in $\Omega_0$. On the one hand, when $\vert Q(B^T)^r\xi\vert^2\le \sigma_0\vert\xi\vert^2$, we deduce the following estimate from \eqref{23072019E22},
$$\langle\xi\rangle^{\lambda_{r-1}}\psi_0\bigg(\frac{\vert Q(B^T)^r\xi\vert^2}{\vert\xi\vert^2}\bigg)w_0(\vert\xi\vert^2) = \langle\xi\rangle^{\lambda_{r-1}}\geq\langle\xi\rangle^{\lambda_r},$$
since $\lambda_{r-1}>\lambda_r$ by assumption. The estimate \eqref{06122018E7} therefore writes as
\begin{multline*}
	\langle\xi\rangle^{\lambda_r}\lesssim\vert Q(B^T)^{r-1}\xi\vert^{\lambda_{r-1}}\bigg\vert\psi'\bigg(\frac{\vert Q(B^T)^{r-1}\xi\vert^2}{\langle\xi\rangle^{2\lambda_r/\lambda_{r-1}}}\bigg)\bigg\vert \\[5pt]
	+ \langle Q\xi\rangle^{\lambda_0} + \langle Q(B^T)^{r-2}\xi\rangle^{q_{r-2}}\langle\xi\rangle^{s_{r-2}} + \langle Q(B^T)^{r-1}\xi\rangle^{q_{r-1}}\langle\xi\rangle^{s_{r-1}} + B^T\xi\cdot\nabla_{\xi}G_r(\xi),
\end{multline*}
since
$$\frac{\vert Q(B^T)^r\xi\vert^2}{\langle\xi\rangle^{2-\lambda_r}}\psi\bigg(\frac{\vert Q(B^T)^{r-1}\xi\vert^2}{\langle\xi\rangle^{2\lambda_r/\lambda_{r-1}}}\bigg)\geq0.$$
On the other hand, when $\vert Q(B^T)^r\xi\vert^2\geq\sigma_0\vert\xi\vert^2$, we deduce that $\vert Q(B^T)^r\xi\vert^2\gtrsim\langle\xi\rangle^2$ according to $\vert\xi\vert\geq\sigma_1$ with $\sigma_1>0$, which proves that
$$\langle\xi\rangle^{\lambda_r}\lesssim\frac{\vert Q(B^T)^r\xi\vert^2}{\langle\xi\rangle^{2-\lambda_r}}.$$
The inequality \eqref{06122018E7} therefore writes in this case in the following way
\begin{multline*}
	\langle\xi\rangle^{\lambda_r}\psi\bigg(\frac{\vert Q(B^T)^{r-1}\xi\vert^2}{\langle\xi\rangle^{2\lambda_r/\lambda_{r-1}}}\bigg)
	\lesssim\vert Q(B^T)^{r-1}\xi\vert^{\lambda_{r-1}}\bigg\vert\psi'\bigg(\frac{\vert Q(B^T)^{r-1}\xi\vert^2}{\langle\xi\rangle^{2\lambda_r/\lambda_{r-1}}}\bigg)\bigg\vert \\[5pt]
	+ \langle Q\xi\rangle^{\lambda_0} + \langle Q(B^T)^{r-2}\xi\rangle^{q_{r-2}}\langle\xi\rangle^{s_{r-2}} + \langle Q(B^T)^{r-1}\xi\rangle^{q_{r-1}}\langle\xi\rangle^{s_{r-1}} + B^T\xi\cdot\nabla_{\xi}G_r(\xi),
\end{multline*}
since
$$\langle\xi\rangle^{\lambda_{r-1}}\psi_0\bigg(\frac{\vert Q(B^T)^r\xi\vert^2}{\vert\xi\vert^2}\bigg)w_0(\vert\xi\vert^2)\geq0.$$
Proceeding similarly as we have already done to establish the estimate \eqref{23072019E23}, that is, mimicking \eqref{23072019E7}, we deduce that
\begin{multline*}
	\langle\xi\rangle^{\lambda_r}\lesssim\vert Q(B^T)^{r-1}\xi\vert^{\lambda_{r-1}}w\bigg(\frac{\vert Q(B^T)^{r-1}\xi\vert^2}{\langle\xi\rangle^{2\lambda_r/\lambda_{r-1}}}\bigg) \\[5pt]
	+ \langle Q\xi\rangle^{\lambda_0} + \langle Q(B^T)^{r-2}\xi\rangle^{q_{r-2}}\langle\xi\rangle^{s_{r-2}} + \langle Q(B^T)^{r-1}\xi\rangle^{q_{r-1}}\langle\xi\rangle^{s_{r-1}} + B^T\xi\cdot\nabla_{\xi}G_r(\xi),
\end{multline*}
where $w\in C^{\infty}(\mathbb R,[0,1])$ is a smooth function with the following properties
\begin{equation}\label{23072019E11}
	\text{$w = 1$ on $\big\{x\in\mathbb R : \vert x\vert\geq c_1\big\}$}\quad\text{and}\quad \Supp w\subset\big\{x\in\mathbb R : \vert x\vert\geq c_2\big\},
\end{equation}
where $c_1>c_2>0$, such that $\vert\psi'\vert\le w$ and $1-\psi\le w$. This disjunction of cases proves that the estimate \eqref{23072019E6} holds when $\vert\xi\vert\geq\sigma_1$. Moreover, since the all functions implied are continuous, \eqref{23072019E6} remains for all $\xi\in\Omega_0$ satisfying $\vert\xi\vert<\sigma_1$, up to grow all the positive constants at play.
\newline

\noindent\textbf{An instrumental estimate.} In order to end the proof of Proposition \ref{05122018P1}, it remains to check that for all $\varepsilon>0$, there exists a smooth real-valued symbol $p_{r,\varepsilon}\in C^{\infty}(\mathbb R^n)$ satisfying that for all $\xi\in\mathbb R^n$,
\begin{equation}\label{23042020E2}
	\vert p_{r,\varepsilon}(\xi)\vert\lesssim\langle Q(B^T)^{r-2}\xi\rangle^{q_{r-2}}\langle\xi\rangle^{s_{r-2}},
\end{equation}
and
\begin{equation}\label{23042020E1}
	\vert Q(B^T)^{r-1}\xi\vert^{\lambda_{r-1}}w\bigg(\frac{\vert Q(B^T)^{r-1}\xi\vert^2}{\langle\xi\rangle^{2\lambda_r/\lambda_{r-1}}}\bigg)
	\lesssim\langle Q\xi\rangle^{\lambda_0} + \varepsilon\langle\xi\rangle^{\lambda_r} + B^T\xi\cdot\nabla_{\xi}p_{r,\varepsilon}(\xi).
\end{equation}
where $w\in C^{\infty}(\mathbb R,[0,1])$ is the function defined in \eqref{23072019E11}. We fix $\varepsilon>0$ a small positive real number for the rest of this proof.
\newline

\noindent\textit{Introduction.} Let $\Gamma_2,\ldots,\Gamma_r\geq1$ be some large positive constants whose values will be chosen later on. Setting $\psi=1-w$, we also consider the symbols $W_1,\ldots,W_r$ and $\Psi_2,\ldots,\Psi_r$ formally defined on the one hand by
\begin{equation}\label{25032019E6}
	W_1(\xi) = w\bigg(\frac{\vert Q(B^T)^{r-1}\xi\vert^2}{\langle\xi\rangle^{2\lambda_r/\lambda_{r-1}}}\bigg),
\end{equation}
and on the other hand for all $j\in\{2,\ldots,r\}$ by
\begin{align}
	& W_j(\xi) = w\bigg(\frac{\Gamma^2_j\vert Q(B^T)^{r-j}\xi\vert^2}{\vert Q(B^T)^{r-j+1}\xi\vert^{2\lambda_{r-j+1}/\lambda_{r-j}}}\bigg), \label{25032019E9} \\[5pt]
	& \Psi_j(\xi) = \psi\bigg(\frac{\Gamma^2_j\vert Q(B^T)^{r-j}\xi\vert^2}{\vert Q(B^T)^{r-j+1}\xi\vert^{2\lambda_{r-j+1}/\lambda_{r-j}}}\bigg). \label{27032019E6}
\end{align}
Notice that the function $\psi\in C^{\infty}(\mathbb R,[0,1])$ satisfies
\begin{equation}\label{24072019E4}
	\text{$\psi = 1$ on $\big\{x\in\mathbb R : \vert x\vert\le c_2\big\}$},\quad \Supp\psi\subset\big\{x\in\mathbb R : \vert x\vert\le c_1\big\}.
\end{equation}
We aim at proving by induction that the positive constants $\Gamma_2,\ldots,\Gamma_r\gg1$ can be chosen large enough so that for all $k\in\{1,\ldots,r-1\}$, there exists a smooth  real-valued symbol $p_{k,\varepsilon}\in C^{\infty}(\mathbb R^n)$ satisfying
\begin{equation}\label{23042020E3}
	\forall\xi\in\mathbb R^n,\quad \vert p_{k,\varepsilon}(\xi)\vert\lesssim\langle Q(B^T)^{r-2}\xi\rangle^{q_{r-2}}\langle\xi\rangle^{s_{r-2}},
\end{equation}
and such that for all $\xi\in\mathbb R^n$,
\begin{multline}\label{08042019E18}
	\vert Q(B^T)^{r-k}\xi\vert^{\lambda_{r-k}}\mathcal W_k(\xi)\Psi_{k+1}(\xi) \\[5pt]
	\lesssim\vert Q(B^T)^{r-k-1}\xi\vert^{\lambda_{r-k-1}}\mathcal W_{k+1}(\xi)
	+ \varepsilon\langle\xi\rangle^{\lambda_r} + B^T\xi\cdot\nabla_{\xi}p_{k,\varepsilon}(\xi),
\end{multline}
and
\begin{equation}\label{25032019E2}
	\vert Q(B^T)^{r-k}\xi\vert^{\lambda_{r-k}}\mathcal W_k(\xi)
	\lesssim\vert Q(B^T)^{r-k-1}\xi\vert^{\lambda_{r-k-1}}\mathcal W_{k+1}(\xi)
	+\varepsilon\langle\xi\rangle^{\lambda_r} + B^T\xi\cdot\nabla_{\xi}p_{k,\varepsilon}(\xi),
\end{equation}
where the function $\mathcal W_k\in C^{\infty}(\mathbb R^n)\cap L^{\infty}(\mathbb R^n)$ is given by
\begin{equation}\label{24072019E18}
	\mathcal W_k = \prod_{j=1}^kW_j.
\end{equation} 
Notice from the definition \eqref{23072019E11} of the function $w$ that the symbols $\mathcal W_k$ are well-defined on $\mathbb R^n$ and that for all $2\le j\le k$ and $\xi\in\Supp\mathcal W_k$,
\begin{equation}\label{08042020E4}
	\langle\xi\rangle^{\lambda_r}\lesssim\vert Q(B^T)^{r-1}\xi\vert^{\lambda_{r-1}}\quad\text{and}\quad\vert Q(B^T)^{r-j+1}\xi\vert^{\lambda_{r-j+1}}\lesssim\Gamma_j^{\lambda_{r-j}}\vert Q(B^T)^{r-j}\xi\vert^{\lambda_{r-j}}.
\end{equation}
Synthetically, these estimates can be written in the following way
\begin{multline}\label{08042020E6}
	\langle\xi\rangle^{\lambda_r}\lesssim\vert Q(B^T)^{r-1}\xi\vert^{\lambda_{r-1}}\lesssim_{\ \Gamma_2}\vert Q(B^T)^{r-2}\xi\vert^{\lambda_{r-2}}
	\lesssim_{\ \Gamma_2,\Gamma_3}\ldots \\[5pt]
	\ldots\lesssim_{\ \Gamma_2,\ldots,\Gamma_{k-1}}\vert Q(B^T)^{r-k+1}\xi\vert^{\lambda_{r-k+1}}\lesssim_{\ \Gamma_2,\ldots,\Gamma_k}\vert Q(B^T)^{r-k}\xi\vert^{\lambda_{r-k}}.
\end{multline}
As a consequence, each function $\Psi_{k+1}$ is well-defined on the support of the function $\mathcal W_k$. Notice for futur estimates that from the definition \eqref{24072019E4} the function $\psi$, the following estimate holds for all $1\le k\le r-1$ and $\xi\in\Supp(\mathcal W_k\Psi_{k+1})$,
\begin{equation}\label{08042020E5}
	\Gamma_{k+1}^{\lambda_{r-k-1}}\vert Q(B^T)^{r-k-1}\xi\vert^{\lambda_{r-k-1}}\lesssim\vert Q(B^T)^{r-k}\xi\vert^{\lambda_{r-k}}.
\end{equation}
The estimate \eqref{08042019E18} will be instrumental to prove \eqref{25032019E2} while the estimate \eqref{25032019E2} and a straightforward induction implies that for all $\xi\in\mathbb R^n$,
$$\vert Q(B^T)^{r-1}\xi\vert^{\lambda_{r-1}}\mathcal W_1(\xi)\lesssim\vert Q\xi\vert^{\lambda_0}\mathcal W_r(\xi) + \varepsilon\langle\xi\rangle^{\lambda_r} + \sum_{k=1}^r B^T\xi\cdot\nabla_{\xi}p_{k,\varepsilon}(\xi).$$
This will end the proof of \eqref{23042020E2} and \eqref{23042020E1} since the function $\mathcal W_r$ is bounded by construction and $p_{r,\varepsilon} = \sum_{k=1}^{r-1}p_{k,\varepsilon}$ defines a smooth real-valued symbol $p_{r,\varepsilon}\in C^{\infty}(\mathbb R^n)$ satisfying \eqref{23042020E2} from \eqref{23042020E3}. Notice that we have not used the assumption 
\begin{equation}\label{26062020E2}
	\forall j\in\{2,\ldots,r\},\quad \frac{\lambda_{j-1}}{\lambda_{j-2}}+\frac{\lambda_{j-1}}{\lambda_j}\le 2,
\end{equation}
since the beginning of the proof of Proposition \ref{05122018P1}. It will play its role during this last step.
\newline

\noindent\textit{Basic case.} Let us recall from \eqref{25032019E6}, \eqref{25032019E9} and \eqref{27032019E6} that the symbols $W_1$, $W_2$ and $\Psi_2$ are formally defined for all $\xi\in\mathbb R^n$ by
\begin{align}
	& W_1(\xi) = w\bigg(\frac{\vert Q(B^T)^{r-1}\xi\vert^2}{\langle\xi\rangle^{2\lambda_r/\lambda_{r-1}}}\bigg) \label{23072019E12}, \\[5pt]
	& W_2(\xi) = w\bigg(\frac{\Gamma^2_2\vert Q(B^T)^{r-2}\xi\vert^2}{\vert Q(B^T)^{r-1}\xi\vert^{2\lambda_{r-1}/\lambda_{r-2}}}\bigg), \label{25032019E10}
\end{align}
and
\begin{equation}\label{25032019E7}
	\Psi_2(\xi) = \psi\bigg(\frac{\Gamma^2_2\vert Q(B^T)^{r-2}\xi\vert^2}{\vert Q(B^T)^{r-1}\xi\vert^{2\lambda_{r-1}/\lambda_{r-2}}}\bigg).
\end{equation}
We consider the symbol $p_{1,\varepsilon}\in C^{\infty}(\mathbb R^n)$ defined for all $\xi\in\mathbb R^n$ by
\begin{equation}\label{08042019E5}
	p_{1,\varepsilon}(\xi) = \frac{Q(B^T)^{r-1}\xi\cdot Q(B^T)^{r-2}\xi}{\vert Q(B^T)^{r-1}\xi\vert^{2-\lambda_{r-1}}}\ W_1(\xi)\Psi_2(\xi).
\end{equation}
Notice from \eqref{08042020E6} that the symbol $p_{1,\varepsilon}$ is well-defined on $\mathbb R^n$. Moreover, Cauchy-Schwarz' inequality combined with \eqref{22042020E1} and the estimate \eqref{08042020E5} implies that for all $\xi\in\Supp(W_1\Psi_2)$,
\begin{align}\label{04042019E3}
	\frac{\vert Q(B^T)^{r-1}\xi\cdot Q(B^T)^{r-2}\xi\vert}{\vert Q(B^T)^{r-1}\xi\vert^{2-\lambda_{r-1}}}
	& \lesssim\frac{\vert Q(B^T)^{r-1}\xi\vert\vert Q(B^T)^{r-2}\xi\vert^{1-q_{r-2}+q_{r-2}}}{\vert Q(B^T)^{r-1}\xi\vert^{2-\lambda_{r-1}}} \\
	& \lesssim\langle Q(B^T)^{r-2}\xi\rangle^{q_{r-2}}\frac{\vert Q(B^T)^{r-1}\xi\vert^{(1-q_{r-2})\lambda_{r-1}/\lambda_{r-2}}}{\Gamma_2\vert Q(B^T)^{r-1}\xi\vert^{1-\lambda_{r-1}}} \nonumber \\
	& \lesssim\frac1{\Gamma_2}\langle Q(B^T)^{r-2}\xi\rangle^{q_{r-2}}\langle\xi\rangle^{s_{r-2}}. \nonumber
\end{align}
This estimate implies in particular that the symbol $p_{1,\varepsilon}$ satisfies that for all $\xi\in\mathbb R^n$,
$$\vert p_{1,\varepsilon}(\xi)\vert\lesssim\frac1{\Gamma_2}\langle Q(B^T)^{r-2}\xi\rangle^{q_{r-2}}\langle\xi\rangle^{s_{r-2}}.$$
We want to prove that the positive constant $\Gamma_2\gg_{\varepsilon}1$ can be chosen large enough so that the two following estimates
\begin{equation}\label{08042019E2}
	\vert Q(B^T)^{r-1}\xi\vert^{\lambda_{r-1}}W_1(\xi)\Psi_2(\xi) \\
	\lesssim\vert Q(B^T)^{r-2}\xi\vert^{\lambda_{r-2}}\mathcal W_2(\xi)
	+ \varepsilon\langle\xi\rangle^{\lambda_r} + B^T\xi\cdot\nabla_{\xi}p_{1,\varepsilon}(\xi),
\end{equation}
and
\begin{equation}\label{25072019E3}
	\vert Q(B^T)^{r-1}\xi\vert^{\lambda_{r-1}}W_1(\xi)
	\lesssim\vert Q(B^T)^{r-2}\xi\vert^{\lambda_{r-2}}\mathcal W_2(\xi)
	+\varepsilon\langle\xi\rangle^{\lambda_r} + B^T\xi\cdot\nabla_{\xi}p_{1,\varepsilon}(\xi),
\end{equation}
hold for all $\xi\in\mathbb R^n$. We first focus on proving \eqref{08042019E2}. We therefore need to compute and estimate the quantity $\langle B^T\xi,\nabla_{\xi}\rangle p_{1,\varepsilon}(\xi)$. First, it follows from a direct computation that for all $\xi\in\mathbb R^n$,
$$B^T\xi\cdot\nabla_{\xi}(Q(B^T)^{r-1}\xi\cdot Q(B^T)^{r-2}\xi) = \vert Q(B^T)^{r-1}\xi\vert^2 + Q(B^T)^r\xi\cdot Q(B^T)^{r-2}\xi.$$
We therefore deduce that for all $\xi\in\mathbb R^n$,
$$B^T\xi\cdot\nabla_{\xi}p_{1,\varepsilon}(\xi) = \vert Q(B^T)^{r-1}\xi\vert^{\lambda_{r-1}}W_1(\xi)\Psi_2(\xi) + B_{1,1}(\xi) + B_{2,1}(\xi) + B_{3,1}(\xi) + B_{4,1}(\xi),$$
where the four terms $B_{1,1}(\xi)$, $B_{2,1}(\xi)$, $B_{3,1}(\xi)$ and $B_{4,1}(\xi)$ are respectively defined by
\begin{align*}
	& B_{1,1}(\xi) = \frac{Q(B^T)^r\xi\cdot Q(B^T)^{r-2}\xi}{\vert Q(B^T)^{r-1}\xi\vert^{2-\lambda_{r-1}}}\ W_1(\xi)\Psi_2(\xi), \\[5pt]
	& B_{2,1}(\xi) = \big(B^T\xi\cdot\nabla_{\xi}\vert Q(B^T)^{r-1}\xi\vert^{\lambda_{r-1}-2}\big)(Q(B^T)^{r-1}\xi\cdot Q(B^T)^{r-2}\xi) W_1(\xi)\Psi_2(\xi), \\[5pt]
	& B_{3,1}(\xi) = \frac{Q(B^T)^{r-1}\xi\cdot Q(B^T)^{r-2}\xi}{\vert Q(B^T)^{r-1}\xi\vert^{2-\lambda_{r-1}}}
(B^T\xi\cdot\nabla_{\xi}W_1(\xi))\Psi_2(\xi),
\end{align*}
and 
$$B_{4,1}(\xi) = \frac{Q(B^T)^{r-1}\xi\cdot Q(B^T)^{r-2}\xi}{\vert Q(B^T)^{r-1}\xi\vert^{2-\lambda_{r-1}}}\
W_1(\xi)(B^T\xi\cdot\nabla_{\xi}\Psi_2(\xi)).$$
We now deal with each of these terms one by one. \\[5pt]
\textbf{1.} We deduce from Cauchy-Schwarz' inequality and the estimates \eqref{08042020E4}, \eqref{08042020E5} that the term $B_{1,1}(\xi)$ can be controlled in the following way for all $\xi\in\mathbb R^n$,
$$\vert B_{1,1}(\xi)\vert\lesssim\frac{\vert Q(B^T)^{r-1}\xi\vert^{\lambda_{r-1}/\lambda_r}\vert Q(B^T)^{r-1}\xi\vert^{\lambda_{r-1}/\lambda_{r-2}}}{\Gamma_2\vert Q(B^T)^{r-1}\xi\vert^{2-\lambda_{r-1}}}\ W_1(\xi)\Psi_2(\xi).$$
It therefore follows from the assumption \eqref{26062020E2} that for all $\xi\in\mathbb R^n$,
\begin{equation}\label{27032019E3}
	\vert B_{1,1}(\xi)\vert\lesssim\frac1{\Gamma_2}\vert Q(B^T)^{r-1}\xi\vert^{\lambda_{r-1}}W_1(\xi)\Psi_2(\xi).
\end{equation}
\textbf{2.} A direct computation shows that for all $\xi\in\Supp W_1$,
$$B^T\xi\cdot\nabla_{\xi}\vert Q(B^T)^{r-1}\xi\vert^{\lambda_{r-1}-2}=(\lambda_{r-1}-2)\frac{Q(B^T)^r\xi\cdot Q(B^T)^{r-1}\xi}{\vert Q(B^T)^{r-1}\xi\vert^{4-\lambda_{r-1}}}.$$
The term $B_{2,2}(\xi)$ is thus given from its definition by
$$B_{2,1}(\xi) = (\lambda_{r-1}-2)\frac{Q(B^T)^r\xi\cdot Q(B^T)^{r-1}\xi}{\vert Q(B^T)^{r-1}\xi\vert^2}\frac{Q(B^T)^{r-1}\xi\cdot Q(B^T)^{r-2}\xi}{\vert Q(B^T)^{r-1}\xi\vert^{2-\lambda_{r-1}}}\ W_1(\xi)\Psi_2(\xi).$$
We then deduce from the same ingredients as the one used to estimate the term $B_{1,1}(\xi)$ that for all $\xi\in\mathbb R^n$, the term $B_{2,1}(\xi)$ is itself controlled in the following way for all $\xi\in\mathbb R^n$,
$$\vert B_{2,1}(\xi)\vert\lesssim\frac{\vert Q(B^T)^{r-1}\xi\vert^{\lambda_{r-1}/\lambda_r}\vert Q(B^T)^{r-1}\xi\vert}{\vert Q(B^T)^{r-1}\xi\vert^2}
\frac{\vert Q(B^T)^{r-1}\xi\vert\vert Q(B^T)^{r-1}\xi\vert^{\lambda_{r-1}/\lambda_{r-2}}}{\Gamma_2\vert Q(B^T)^{r-1}\xi\vert^{2-\lambda_{r-1}}}W_1(\xi)\Psi_2(\xi).$$
Using \eqref{26062020E2} anew, we deduce that for all $\xi\in\mathbb R^n$,
\begin{equation}\label{26032019E2}
	\vert B_{2,1}(\xi)\vert\lesssim\frac1{\Gamma_2}\vert Q(B^T)^{r-1}\xi\vert^{\lambda_{r-1}}W_1(\xi)\Psi_2(\xi).
\end{equation}
\textbf{3.} Notice that Cauchy-Schwarz' inequality combined with the estimate \eqref{08042020E5} implies that for all $\xi\in\Supp(W_1\Psi_2)$,
\begin{equation}\label{24042020E4}
	\frac{\vert Q(B^T)^{r-1}\xi\cdot Q(B^T)^{r-2}\xi\vert}{\vert Q(B^T)^{r-1}\xi\vert^{2-\lambda_{r-1}}}\lesssim\frac{\vert Q(B^T)^{r-1}\xi\vert^{1+\lambda_{r-1}/\lambda_{r-2}}}{\Gamma_2\vert Q(B^T)^{r-1}\xi\vert^{2-\lambda_{r-1}}}.
\end{equation}
The term $B_{3,1}(\xi)$ can be directly controlled by using this estimate, the assumption \eqref{26062020E2} and Lemma \ref{29032019L1}, the function $w$ being a smooth function satisfying that $w'$ is compactly supported with $\max\Supp w' = c_1>0$, the condition $\lambda_{r-1}<\lambda_r$ being assumed. We obtain that for all $\xi\in\mathbb R^n$,
\begin{align}\label{26032019E3}
	\vert B_{3,1}(\xi)\vert & \lesssim\frac{\vert Q(B^T)^{r-1}\xi\vert^{1+\lambda_{r-1}/\lambda_{r-2}}}{\Gamma_2\vert Q(B^T)^{r-1}\xi\vert^{2-\lambda_{r-1}}}
	\langle\xi\rangle^{1-\lambda_r/\lambda_{r-1}}\bigg\vert w'\bigg(\frac{\vert Q(B^T)^{r-1}\xi\vert^2}{\langle\xi\rangle^{2\lambda_r/\lambda_{r-1}}}\bigg)\bigg\vert\Psi_2(\xi) \\[5pt]
	& \lesssim\frac{\vert Q(B^T)^{r-1}\xi\vert^{1+\lambda_{r-1}/\lambda_{r-2}}}{\Gamma_2\vert Q(B^T)^{r-1}\xi\vert^{2-\lambda_{r-1}}}
	\frac{\vert Q(B^T)^{r-1}\xi\vert^{\lambda_{r-1}/\lambda_r}}{\vert Q(B^T)^{r-1}\xi\vert}\bigg\vert w'\bigg(\frac{\vert Q(B^T)^{r-1}\xi\vert^2}{\langle\xi\rangle^{2\lambda_r/\lambda_{r-1}}}\bigg)\bigg\vert\Psi_2(\xi) \nonumber \\[5pt]
	& \lesssim\frac1{\Gamma_2}\vert Q(B^T)^{r-1}\xi\vert^{\lambda_{r-1}}\bigg\vert w'\bigg(\frac{\vert Q(B^T)^{r-1}\xi\vert^2}{\langle\xi\rangle^{2\lambda_r/\lambda_{r-1}}}\bigg)\bigg\vert\Psi_2(\xi)\lesssim\frac1{\Gamma_2}\langle\xi\rangle^{\lambda_r}. \nonumber
\end{align}
\textbf{4.} The estimate \eqref{24042020E4} and the proof of Lemma \ref{25072019L1} directly imply that for all $\xi\in\mathbb R^n$,
\begin{multline*}
	\vert B_{4,1}(\xi)\vert\lesssim_{\ \Gamma_2}\frac{\vert Q(B^T)^{r-1}\xi\vert^{1+\lambda_{r-1}/\lambda_{r-2}}}{\vert Q(B^T)^{r-1}\xi\vert^{2-\lambda_{r-1}}}
	\bigg(\frac{\vert Q(B^T)^{r-1}\xi\vert}{\vert Q(B^T)^{r-1}\xi\vert^{\lambda_{r-1}/\lambda_{r-2}}} \\
	+ \frac{\vert Q(B^T)^{r-1}\xi\vert^{\lambda_{r-1}/\lambda_r}}{\vert Q(B^T)^{r-1}\xi\vert}\bigg)\mathcal W_2(\xi).
\end{multline*}
Using the assumption \eqref{26062020E2} and \eqref{08042020E4}, we deduce that there exists a positive constant $\Lambda_{\Gamma_2}>0$ depending only on $\Gamma_2\gg1$ such that for all $\xi\in\mathbb R^n$,
\begin{equation}\label{27032019E1}
	\vert B_{4,1}(\xi)\vert \lesssim\Lambda_{\Gamma_2}\vert Q(B^T)^{r-2}\xi\vert^{\lambda_{r-2}}\mathcal W_2(\xi).
\end{equation}
Gathering the estimates \eqref{27032019E3}, \eqref{26032019E2}, \eqref{26032019E3} and \eqref{27032019E1}, we deduce that there exists a positive constant $c>0$ independent of $\varepsilon>0$ and $\Gamma_2,\ldots,\Gamma_r\gg1$ such that for all $\xi\in\mathbb R^n$,
\begin{multline*}
	\Big(1-\frac c{\Gamma_2}\Big)\vert Q(B^T)^{r-1}\xi\vert^{\lambda_{r-1}}W_1(\xi)\Psi_2(\xi) \\
	\lesssim\Lambda_{\Gamma_2}\vert Q(B^T)^{r-2}\xi\vert^{\lambda_{r-2}}\mathcal W_2(\xi) + \frac1{\Gamma_2}\langle\xi\rangle^{\lambda_r} + B^T\xi\cdot\nabla_{\xi}p_{1,\varepsilon}(\xi).
\end{multline*}
We adjust the large constant $\Gamma_2\gg_{\varepsilon}1$ so that the inequality \eqref{08042019E2} holds. The value of the positive constant $\Gamma_2\gg1$ is now fixed and will not be explicit anymore in the following. To end the basic case, we check that the estimate \eqref{25072019E3} can be deduced from \eqref{08042019E2}. By definition, the functions $w$ and $\psi$ satisfy $1=w+\psi$, and as a consequence, the following equality holds
\begin{equation}\label{24072019E1}
	W_1 = W_1W_2 + W_1\Psi_2 = \mathcal W_2 + W_1\Psi_2.
\end{equation}
Moreover, we notice from \eqref{08042020E4} that for all $\xi\in\mathbb R^n$,
$$\vert Q(B^T)^{r-1}\xi\vert^{\lambda_{r-1}}\mathcal W_2(\xi)\lesssim\vert Q(B^T)^{r-2}\xi\vert^{\lambda_{r-2}}\mathcal W_2(\xi).$$
As a consequence of \eqref{24072019E1} and the above estimate, the following inequality holds for all $\xi\in\mathbb R^n$,
$$\vert Q(B^T)^{r-1}\xi\vert^{\lambda_{r-1}}W_1(\xi)\lesssim\vert Q(B^T)^{r-2}\xi\vert^{\lambda_{r-2}}\mathcal W_2(\xi)
+ \vert Q(B^T)^{r-1}\xi\vert^{\lambda_{r-1}}W_1(\xi)\Psi_2(\xi).$$
The estimate \eqref{25072019E3} is then a consequence of this inequality and \eqref{08042019E2}. This proves the induction hypothesis in the basic case. 
\newline

\noindent\textit{Induction.} Let $2\le k\le r-1$. We assume that the values of the large positive constants $\Gamma_2,\ldots,\Gamma_k\gg_{\varepsilon}1$ have already been chosen and that we have constructed some smooth real-valued symbols $p_{1,\varepsilon},\ldots,p_{k-1,\varepsilon}\in C^{\infty}(\mathbb R^n)$ satisfying that for all $1\le j\le k-1$ 
\begin{equation}\label{24042020E3}
	\forall\xi\in\mathbb R^n,\quad \vert p_{j,\varepsilon}(\xi)\vert\lesssim\langle Q(B^T)^{r-2}\xi\rangle^{q_{r-2}}\langle\xi\rangle^{s_{r-2}},
\end{equation}
and such that for all $\xi\in\mathbb R^n$,
\begin{multline}\label{08042019E11}
	\vert Q(B^T)^{r-j}\xi\vert^{\lambda_{r-j}}\mathcal W_j(\xi)\Psi_{j+1}(\xi) \\[5pt]
	\lesssim\vert Q(B^T)^{r-j-1}\xi\vert^{\lambda_{r-j-1}}\mathcal W_{j+1}(\xi) + \varepsilon\langle\xi\rangle^{\lambda_r} + B^T\xi\cdot\nabla_{\xi}p_{j,\varepsilon}(\xi),
\end{multline}
and
\begin{equation}\label{25072019E4}
	\vert Q(B^T)^{r-j}\xi\vert^{\lambda_{r-j}}\mathcal W_j(\xi)
	\lesssim\vert Q(B^T)^{r-j-1}\xi\vert^{\lambda_{r-j-1}}\mathcal W_{j+1}(\xi)
	+\varepsilon\langle\xi\rangle^{\lambda_r} + B^T\xi\cdot\nabla_{\xi}p_{j,\varepsilon}(\xi).
\end{equation}
Notice that the estimate \eqref{25072019E4} and a straightforward induction implies in particular that for all $1\le j\le k-1$ and $\xi\in\mathbb R^n$,
\begin{equation}\label{25072019E6}
	\vert Q(B^T)^{r-j}\xi\vert^{\lambda_{r-j}}\mathcal W_j(\xi)\lesssim\vert Q(B^T)^{r-k}\xi\vert^{\lambda_{r-k}}\mathcal W_k(\xi)
	+\varepsilon\langle\xi\rangle^{\lambda_r} + \sum_{l=j}^{k-1}B^T\xi\cdot\nabla_{\xi}p_{l,\varepsilon}(\xi).
\end{equation}
Since the values of the large constants $\Gamma_2,\ldots,\Gamma_k\gg_{\varepsilon}1$ have already been chosen, we will not make them appear in the futur estimates. Recalling from \eqref{25032019E6}, \eqref{25032019E9}, \eqref{27032019E6} and \eqref{24072019E18} that $\mathcal W_{k+1}(\xi)$ and $\Psi_{k+1}(\xi)$ are given for all $\xi\in\mathbb R^n$ by
\begin{equation}\label{24072019E14}
	\mathcal W_k(\xi) = \prod_{j=1}^kW_j(\xi) = w\bigg(\frac{\vert Q(B^T)^{r-1}\xi\vert^2}{\langle\xi\rangle^{2\lambda_r/\lambda_{r-1}}}\bigg)\prod_{j=2}^kw\bigg(\frac{\Gamma^2_j\vert Q(B^T)^{r-j}\xi\vert^2}{\vert Q(B^T)^{r-j+1}\xi\vert^{2\lambda_{r-j+1}/\lambda_{r-j}}}\bigg),
\end{equation}
and
\begin{equation}\label{24072019E13}
	\Psi_{k+1}(\xi) = \psi\bigg(\frac{\Gamma^2_{k+1}\vert Q(B^T)^{r-k-1}\xi\vert^2}{\vert Q(B^T)^{r-k}\xi\vert^{2\lambda_{r-k}/\lambda_{r-k+1}}}\bigg),
\end{equation}
we want to adjust the value of the positive constant $\Gamma_{k+1}\gg_{\varepsilon}1$ and construct a smooth real-valued symbol $p_{k,\varepsilon}\in C^{\infty}(\mathbb R^n)$ satisfying
\begin{equation}\label{24042020E1}
	\forall\xi\in\mathbb R^n,\quad \vert p_{k,\varepsilon}(\xi)\vert\lesssim\langle Q(B^T)^{r-2}\xi\rangle^{q_{r-2}}\langle\xi\rangle^{s_{r-2}},
\end{equation}
and such that for all $\xi\in\mathbb R^n$,
\begin{multline}\label{08042019E10}
	\vert Q(B^T)^{r-k}\xi\vert^{\lambda_{r-k}}\mathcal W_k(\xi)\Psi_{k+1}(\xi)
	\lesssim\vert Q(B^T)^{r-k-1}\xi\vert^{\lambda_{r-k-1}}\mathcal W_{k+1}(\xi) \\[5pt]
	+ \varepsilon\langle\xi\rangle^{\lambda_r} + B^T\xi\cdot\nabla_{\xi}p_{k,\varepsilon}(\xi),
\end{multline}
and
\begin{equation}\label{25072019E5}
	\vert Q(B^T)^{r-k}\xi\vert^{\lambda_{r-k}}\mathcal W_k(\xi)
	\lesssim\vert Q(B^T)^{r-k-1}\xi\vert^{\lambda_{r-k-1}}\mathcal W_{k+1}(\xi)
	+\varepsilon\langle\xi\rangle^{\lambda_r} + B^T\xi\cdot\nabla_{\xi}p_{k,\varepsilon}(\xi).
\end{equation}
We first focus on proving the estimate \eqref{08042019E10}. To that end, we consider the smooth symbol $\mathfrak p_{k,\varepsilon}\in C^{\infty}(\mathbb R^n)$ defined for all $\xi\in\mathbb R^n$ by
\begin{equation}\label{24072019E12}
	\mathfrak p_{k,\varepsilon}(\xi) = \frac{Q(B^T)^{r-k}\xi\cdot Q(B^T)^{r-k-1}\xi}{\vert Q(B^T)^{r-k}\xi\vert^{2-\lambda_{r-k}}}\ \mathcal W_k(\xi)\Psi_{k+1}(\xi).
\end{equation}
Notice that the symbol $\mathfrak p_{k,\varepsilon}$ is well-defined from \eqref{08042020E4}. Moreover, Cauchy-Schwarz' inequality and \eqref{08042020E5} imply that for all $\xi\in\Supp(\mathcal W_k\Psi_{k+1})$,
\begin{equation}\label{24072019E5}
	\frac{\vert Q(B^T)^{r-k}\xi\cdot Q(B^T)^{r-k-1}\xi\vert}{\vert Q(B^T)^{r-k}\xi\vert^{2-\lambda_{r-k}}}
	\lesssim\frac{\vert Q(B^T)^{r-k}\xi\vert\vert Q(B^T)^{r-k}\xi\vert^{\lambda_{r-k}/\lambda_{r-k-1}}}{\Gamma_{k+1}\vert Q(B^T)^{r-k}\xi\vert^{2-\lambda_{r-k}}}=\frac1{\Gamma_{k+1}}.
\end{equation}
This estimate with \eqref{24072019E12} shows in particular that
\begin{equation}\label{24042020E2}
	\forall\xi\in\mathbb R^n,\quad\vert\mathfrak p_{k,\varepsilon}(\xi)\vert\lesssim\frac1{\Gamma_{k+1}}.
\end{equation}
In order to study the quantity $B^T\xi\cdot\nabla_{\xi}\mathfrak p_{k,\varepsilon}$, we compute that for all $\xi\in\mathbb R^n$,
$$B^T\xi\cdot\nabla_{\xi}(Q(B^T)^{r-k}\xi\cdot Q(B^T)^{r-k-1}\xi) = \vert Q(B^T)^{r-k}\xi\vert^2 + Q(B^T)^{r-k+1}\xi\cdot Q(B^T)^{r-k-1}\xi.$$
We therefore deduce that for all $\xi\in\mathbb R^n$,
\begin{multline*}
	B^T\xi\cdot\nabla_{\xi}\mathfrak p_{k,\varepsilon}(\xi) = \vert Q(B^T)^{r-k}\xi\vert^{\lambda_{r-k}}\mathcal W_k(\xi)\Psi_{k+1}(\xi) \\[5pt]
	+ B_{1,k}(\xi) + B_{2,k}(\xi) + B_{3,k}(\xi) + B_{4,k}(\xi) + B_{5,k}(\xi),
\end{multline*}
where the five terms $B_{1,k}(\xi)$, $B_{2,k}(\xi)$, $B_{3,k}(\xi)$, $B_{4,k}(\xi)$ and $B_{5,k}(\xi)$ are given by
\begin{align*}
	& B_{1,k}(\xi) = \frac{Q(B^T)^{r-k+1}\xi\cdot Q(B^T)^{r-k-1}\xi}{\vert Q(B^T)^{r-k}\xi\vert^{2-\lambda_{r-k}}}\ \mathcal W_k(\xi)\Psi_{k+1}(\xi), \\[5pt]
	& B_{2,k}(\xi) = (Q(B^T)^{r-k}\xi\cdot Q(B^T)^{r-k-1}\xi)\big(B^T\xi\cdot\nabla_{\xi}\vert Q(B^T)^{r-k}\xi\vert^{\lambda_{r-k}-2}\big)\mathcal W_k(\xi)\Psi_{k+1}(\xi), \\[5pt]
	& B_{3,k}(\xi) = \frac{Q(B^T)^{r-k}\xi\cdot Q(B^T)^{r-k-1}\xi}{\vert Q(B^T)^{r-k}\xi\vert^{2-\lambda_{k-1}}}
(B^T\xi\cdot\nabla_{\xi}W_1(\xi))\bigg(\prod_{j=2}^kW_j(\xi)\bigg)\Psi_{k+1}(\xi), \\[5pt]
	& B_{4,k}(\xi) = \frac{Q(B^T)^{r-k}\xi\cdot Q(B^T)^{r-k-1}\xi}{\vert Q(B^T)^{r-k}\xi\vert^{2-\lambda_{r-k}}}\
\mathcal W_k(\xi)(B^T\xi\cdot\nabla_{\xi}\Psi_{k+1}(\xi)),
\end{align*}
and
$$B_{5,k}(\xi) = \frac{Q(B^T)^{r-k}\xi\cdot Q(B^T)^{r-k-1}\xi}{\vert Q(B^T)^{r-k}\xi\vert^{2-\lambda_{r-k}}}\
W_1(\xi)\ B^T\xi\cdot\nabla_{\xi}\bigg(\prod_{j=2}^kW_j(\xi)\bigg)\Psi_{k+1}(\xi).$$
As we have done in the basic case when $k=1$, we now consider each term one by one. The strategy used to estimate the four ones follows almost line to line the one used to control the terms $B_{1,1}(\xi)$, $B_{2,1}(\xi)$, $B_{3,1}(\xi)$ and $B_{4,1}(\xi)$ in the basic case. \\[5pt]
\textbf{1.} We deduce from Cauchy-Schwarz' inequality and the estimates \eqref{08042020E4} and \eqref{08042020E5} that for all $\xi\in\mathbb R^n$,
$$\vert B_{1,k}(\xi)\vert\lesssim\frac{\vert Q(B^T)^{r-k}\xi\vert^{\lambda_{r-k}/\lambda_{r-k+1}}\vert Q(B^T)^{r-k}\xi\vert^{\lambda_{r-k}/\lambda_{r-k-1}}}{\Gamma_{k+1}\vert Q(B^T)^{r-k}\xi\vert^{2-\lambda_{k-1}}}\mathcal W_k(\xi)\Psi_{k+1}(\xi).$$
It therefore follows from the assumption \eqref{26062020E2} that for all $\xi\in\mathbb R^n$,
\begin{equation}\label{27032019E11}
	\vert B_{1,k}(\xi)\vert\lesssim\frac1{\Gamma_{k+1}}\vert Q(B^T)^{r-k}\xi\vert^{\lambda_{r-k}}\mathcal W_k(\xi)\Psi_{k+1}(\xi).	
\end{equation}
\textbf{2.} A direct computation shows that for all $\xi\in\Supp \mathcal W_k$,
$$B^T\xi\cdot\nabla_{\xi}\vert Q(B^T)^{r-k}\xi\vert^{\lambda_{r-k}-2}=(\lambda_{r-k}-2)\frac{Q(B^T)^{r-k+1}\xi\cdot Q(B^T)^{r-k}\xi}{\vert Q(B^T)^{r-k}\xi\vert^{4-\lambda_{r-k}}}.$$
Consequently, the term $B_{2,k}(\xi)$ is given for all $\xi\in\mathbb R^n$ from its definition by
\begin{multline*}
	B_{2,k}(\xi) = (\lambda_{r-k}-2)\frac{Q(B^T)^{r-k+1}\xi\cdot Q(B^T)^{r-k}\xi}{\vert Q(B^T)^{r-k}\xi\vert^2} \\
	\times\frac{Q(B^T)^{r-k}\xi\cdot Q(B^T)^{r-k-1}\xi}{\vert Q(B^T)^{r-k}\xi\vert^{2-\lambda_{r-k}}}\ \mathcal W_k(\xi)\Psi_{k+1}(\xi).
\end{multline*}
By using the same ingredients as the ones used to estimate the term $B_{1,k}(\xi)$, we deduce that for all $\xi\in\mathbb R^n$,
\begin{multline*}
	\vert B_{2,k}(\xi)\vert\lesssim\frac{\vert Q(B^T)^{r-k}\xi\vert^{\lambda_{r-k}/\lambda_{r-k+1}}\vert Q(B^T)^{r-k}\xi\vert}{\vert Q(B^T)^{r-k}\xi\vert^2} \\
	\times\frac{\vert Q(B^T)^{r-k}\xi\vert\vert Q(B^T)^{r-k}\xi\vert^{\lambda_{r-k}/\lambda_{r-k-1}}}{\Gamma_{k+1}\vert Q(B^T)^{r-k}\xi\vert^{2-\lambda_{r-k}}}\ \mathcal W_k(\xi)\Psi_{k+1}(\xi).
\end{multline*}
The assumption \eqref{26062020E2} then implies that for all $\xi\in\mathbb R^n$,
\begin{equation}\label{27032019E12}
	\vert B_{2,k}(\xi)\vert\lesssim\frac1{\Gamma_{k+1}}\vert Q(B^T)^{r-k}\xi\vert^{\lambda_{r-k}}\mathcal W_k(\xi)\Psi_{k+1}(\xi).
\end{equation}
\textbf{3.} 
The term $B_{3,k}(\xi)$ can be controlled directly by using \eqref{24072019E5} and Lemma \ref{29032019L1}, the function $w$ being a smooth function such that $w'$ is compactly supported with $\max\Supp w' = c_1>0$, the condition $\lambda_r<\lambda_{r-1}$ being assumed. Using these ingredients, we obtain that for all $\xi\in\mathbb R^n$,
$$\vert B_{3,k}(\xi)\vert\lesssim\frac1{\Gamma_{k+1}}
\langle\xi\rangle^{1-\lambda_r/\lambda_{r-1}}\bigg\vert w'\bigg(\frac{\vert Q(B^T)^{r-1}\xi\vert^2}{\langle\xi\rangle^{2\lambda_r/\lambda_{r-1}}}\bigg)\bigg\vert
\bigg(\prod_{j=2}^kW_j(\xi)\bigg)\Psi_{k+1}(\xi).$$
Moreover, we deduce from \eqref{22042020E1} that 
$$1-\frac{\lambda_r}{\lambda_{r-1}} = \lambda_r-s_{r-1}-\frac{q_{r-1}\lambda_r}{\lambda_{r-1}}\le\lambda_r,$$
Exploiting this estimate and the fact that the functions $w'$ and $\psi$ are bounded, we deduce that for all $\xi\in\mathbb R^n$,
\begin{equation}\label{29032019E3}
	\vert B_{3,k}(\xi)\vert\lesssim\frac1{\Gamma_{k+1}}\langle\xi\rangle^{\lambda_r}.
\end{equation}
\textbf{4.} We deduce from the estimate \eqref{24072019E5} and Lemma \ref{25072019L1} that there exists a positive constant $\Lambda_{\Gamma_k,\Gamma_{k+1}}>0$ depending on $\Gamma_k,\Gamma_{k+1}$ such that the term $B_{4,k}(\xi)$ can be controlled in the following way for all $\xi\in\mathbb R^n$,
\begin{equation}\label{24072019E17}
	\vert B_{4,k}(\xi)\vert\lesssim\Lambda_{\Gamma_k, \Gamma_{k+1}}\vert Q(B^T)^{r-k-1}\xi\vert^{\lambda_{r-k-1}}\mathcal W_{k+1}(\xi).
\end{equation}
Since the value of the constant $\Gamma_k$ has already been fixed, the dependence of $\Lambda_{\Gamma_k,\Gamma_{k+1}}$ with respect to this constant will not be explicit in the following, and we will simply be denoted $\Lambda_{\Gamma_{k+1}}$ instead of $\Lambda_{\Gamma_k,\Gamma_{k+1}}$. \\[5pt]
\textbf{5.} Finally, we consider the term $B_{5,k}(\xi)$, which we recall is given for all $\xi\in\mathbb R^n$ by
$$B_{5,k}(\xi) = \frac{Q(B^T)^{r-k}\xi\cdot Q(B^T)^{r-k-1}\xi}{\vert Q(B^T)^{r-k}\xi\vert^{2-\lambda_{r-k}}}\
W_1(\xi)\ B^T\xi\cdot\nabla_{\xi}\bigg(\prod_{j=2}^kW_j(\xi)\bigg)\Psi_{k+1}(\xi).$$
As a consequence of Leibniz' formula, this term splits in the following way for all $\xi\in\mathbb R^n$,
\begin{equation}\label{25072019E1}
	B_{5,k}(\xi) = \sum_{j=2}^kB_{5,k,j}(\xi),
\end{equation}
where the quantities $B_{5,k,j}(\xi)$ are defined for all $2\le j\le k$ and $\xi\in\mathbb R^n$ by
$$B_{5,k,j}(\xi) = \frac{Q(B^T)^{r-k}\xi\cdot Q(B^T)^{r-k-1}\xi}{\vert Q(B^T)^{r-k}\xi\vert^{2-\lambda_{r-k}}}\ \mathcal W_{j-1}(\xi)\big(B^T\xi\cdot\nabla_{\xi}W_j(\xi)\big)\bigg(\prod_{l=j+1}^kW_l(\xi)\bigg)\Psi_{k+1}(\xi).$$
We deduce from \eqref{08042020E5}, \eqref{24072019E5} and Lemma \ref{25072019L1} that for all $j\in\{2,\ldots,k\}$, there exists a positive constant $\Lambda_{\Gamma_{j-1},\Gamma_j}>0$ only depending on $\Gamma_{j-1}$ and $\Gamma_j$, and only on $\Gamma_2$ when $j=2$, such that for all $\xi\in\mathbb R^n$,
\begin{align*}
	\vert B_{5,k,j}(\xi)\vert & \lesssim\frac{\Lambda_{\Gamma_{j-1},\Gamma_j}}{\Gamma_{k+1}}\vert Q(B^T)^{r-j}\xi\vert^{\lambda_{r-j}}\mathcal W_{j-1}(\xi)\Psi_j(\xi) \\[5pt]
	& \lesssim\frac{\Lambda_{\Gamma_{j-1},\Gamma_j}}{\Gamma_{k+1}}\vert Q(B^T)^{r-j+1}\xi\vert^{\lambda_{r-j+1}}\mathcal W_{j-1}(\xi)\Psi_j(\xi). \nonumber
\end{align*}
Since the values of the constants $\Gamma_2,\ldots,\Gamma_k$ have already been chosen, we will not make the constants $\Lambda_{\Gamma_{j-1},\Gamma_j}$ appear in the following. We deduce from \eqref{25072019E1} and the above estimate that for all $\xi\in\mathbb R^n$,
\begin{align*}
	\vert B_{5,k}(\xi)\vert & \lesssim\frac1{\Gamma_{k+1}}\sum_{j=2}^k\vert Q(B^T)^{r-j+1}\xi\vert^{\lambda_{r-j+1}}\mathcal W_{j-1}(\xi)\Psi_j(\xi) \\
	& = \frac1{\Gamma_{k+1}}\sum_{j=1}^{k-1}\vert Q(B^T)^{r-j}\xi\vert^{\lambda_{r-j}}\mathcal W_j(\xi)\Psi_{j+1}(\xi) \nonumber.
\end{align*}
Then, the induction hypothesis \eqref{08042019E11} implies that for all $\xi\in\mathbb R^n$,
\begin{equation}\label{25072019E8}
	\vert B_{5,k}(\xi)\vert \lesssim \frac1{\Gamma_{k+1}}\sum_{j=1}^{k-1}\Big(\vert Q(B^T)^{r-j-1}\xi\vert^{\lambda_{r-j-1}}\mathcal W_{j+1}(\xi)
	+ \varepsilon\langle\xi\rangle^{\lambda_r} + B^T\xi\cdot\nabla_{\xi}p_{j,\varepsilon}(\xi)\Big).
\end{equation}
We also deduce from \eqref{25072019E6} that the previous sum can be controlled for all $\xi\in\mathbb R^n$ in the following way:
\begin{align}\label{25072019E7}
	&\ \sum_{j=1}^{k-1}\vert Q(B^T)^{r-j-1}\xi\vert^{\lambda_{r-j-1}}\mathcal W_{j+1}(\xi) = \sum_{j=2}^k\vert Q(B^T)^{r-j}\xi\vert^{\lambda_{r-j}}\mathcal W_j(\xi) \\
	= &\ \vert Q(B^T)^{r-k}\xi\vert^{\lambda_{r-k}}\mathcal W_k(\xi) + \sum_{j=2}^{k+1}\vert Q(B^T)^{r-j}\xi\vert^{\lambda_{r-j}}\mathcal W_j(\xi) \nonumber \\
	\lesssim &\ \vert Q(B^T)^{r-k}\xi\vert^{\lambda_{r-k}}\mathcal W_k(\xi) + \varepsilon\langle\xi\rangle^{\lambda_r} + \sum_{j=2}^{k-1}\sum_{l=j}^{k-1}B^T\xi\cdot\nabla_{\xi}p_{l,\varepsilon}(\xi). \nonumber
\end{align}
Combining \eqref{25072019E8} and \eqref{25072019E7} and using that $\Gamma_{k+1}\gg1$, we obtain that for all $\xi\in\mathbb R^n$,
\begin{equation}\label{25072019E9}
	\vert B_{5,k}(\xi)\vert\lesssim\frac1{\Gamma_{k+1}}\vert Q(B^T)^{r-k}\xi\vert^{\lambda_{r-k}}\mathcal W_k(\xi) + \varepsilon\langle\xi\rangle^{\lambda_r}
	+ \sum_{j=1}^{k-1}B^T\xi\cdot\nabla_{\xi}p_{j,\varepsilon}(\xi).
\end{equation}
We deduce from \eqref{27032019E11}, \eqref{27032019E12}, \eqref{29032019E3}, \eqref{24072019E17} and \eqref{25072019E9} that there exists a positive constant $c>0$ independent of $\varepsilon>0$ and $\Gamma_2,\ldots,\Gamma_r\gg1$ such that for all $\xi\in\mathbb R^n$,
\begin{multline}\label{08042019E16}
	\Big(1-\frac c{\Gamma_{k+1}}\Big)\vert Q(B^T)^{r-k}\xi\vert^{\lambda_{r-k}}\mathcal W_k(\xi)\Psi_{k+1}(\xi) 
	\lesssim \frac1{\Gamma_{k+1}}\vert Q(B^T)^{r-k}\xi\vert^{\lambda_{r-k}}\mathcal W_k(\xi) \\[5pt]
	+ \Lambda_{\Gamma_{k+1}}\vert Q(B^T)^{r-k-1}\xi\vert^{\lambda_{r-k-1}}\mathcal W_{k+1}(\xi) 
	+ \frac1{\Gamma_{k+1}}\langle\xi\rangle^{\lambda_r} + \varepsilon\langle\xi\rangle^{\lambda_r}
	+ B^T\xi\cdot\nabla_{\xi}p_{k,\varepsilon}(\xi),
\end{multline}
where we defined the symbol $p_{k,\varepsilon}\in C^{\infty}(\mathbb R^n)$ by $p_{k,\varepsilon} = \mathfrak p_{k,\varepsilon} + \sum_{j=1}^{k-1}p_{j,\varepsilon}$. From there, we can prove that the constant $\Gamma_{k+1}\gg_{\varepsilon}1$ can be chosen large enough so that the estimate \eqref{25072019E5} holds.
By definition, the functions $w$ and $\psi$ satisfy $1 = w+\psi$, and as a consequence, the following equality holds
\begin{equation}\label{25072019E10}
	\mathcal W_k = \mathcal W_kW_{k+1} + \mathcal W_k\Psi_{k+1} = \mathcal W_{k+1} + \mathcal W_k\Psi_{k+1}.
\end{equation}
Moreover, we get from \eqref{08042020E4} that for all $\xi\in\mathbb R^n$,
\begin{equation}\label{25072019E11}
	\vert Q(B^T)^{r-k}\xi\vert^{\lambda_{r-k}}\mathcal W_{k+1}(\xi)\lesssim\Gamma_{k+1}\vert Q(B^T)^{r-k-1}\xi\vert^{\lambda_{r-k-1}}\mathcal W_{k+1}(\xi).
\end{equation}
As a consequence of \eqref{25072019E10} and \eqref{25072019E11}, the following inequality holds for all $\xi\in\mathbb R^n$,
\begin{multline*}
	\vert Q(B^T)^{r-k}\xi\vert^{\lambda_{r-k}}\mathcal W_k(\xi) \\[5pt]
	\lesssim\Gamma_{k+1}\vert Q(B^T)^{r-k-1}\xi\vert^{\lambda_{r-k-1}}\mathcal W_{k+1}(\xi)
	+ \vert Q(B^T)^{r-k}\xi\vert^{\lambda_{r-k}}\mathcal W_k(\xi)\Psi_{k+1}(\xi).
\end{multline*}
Choosing $\Gamma_{k+1}>c$, we deduce from \eqref{08042019E16} and the above estimate that for all $\xi\in\mathbb R^n$,
\begin{multline*}
	\vert Q(B^T)^{r-k}\xi\vert^{\lambda_{r-k}}\mathcal W_k(\xi)\lesssim\frac1{\Gamma_{k+1}}\vert Q(B^T)^{r-k}\xi\vert^{\lambda_{r-k}}\mathcal W_k(\xi) \\[5pt]
	+\big(\Lambda_{\Gamma_{k+1}}+\Gamma_{k+1}\big)\vert Q(B^T)^{r-k-1}\xi\vert^{\lambda_{r-k-1}}\mathcal W_{k+1}(\xi) \\[5pt]
	+\frac1{\Gamma_{k+1}}\langle\xi\rangle^{\lambda_r}+\varepsilon\langle\xi\rangle^{\lambda_r} + B^T\xi\cdot\nabla_{\xi}p_{k,\varepsilon}(\xi).
\end{multline*}
By choosing the constant $\Gamma_{k+1}\gg_{\varepsilon}1$ large enough, we deduce that the estimate \eqref{25072019E5} holds. Then, by combining \eqref{25072019E5} and \eqref{08042019E16} and choosing $\Gamma_{k+1}\gg1$ even more larger if necessary, we deduce that the estimate \eqref{08042019E10} also holds. The value of the constant $\Gamma_{k+1}\gg1$ is now fixed. Notice from \eqref{24042020E3} and \eqref{24042020E2} that $p_{k,\varepsilon}$ satisfies the estimate \eqref{24042020E1}. This ends the induction step and the proof of \eqref{23042020E2} and \eqref{23042020E1}.
\end{proof}

\section{Appendix}\label{appendix}

\subsection{About the Kalman rank condition} Let us begin this appendix by giving a manageable characterization of the Kalman rank condition.

\begin{lem}\label{29082018E1} Let $B$ and $Q$ be real $n\times n$ matrices, with $Q$ symmetric positive semidefinite. The Kalman rank condition \eqref{10052018E4} holds if and only if
$$\bigcap_{j=0}^{n-1}\Ker\big(Q(B^T)^j\big) = \{0\}.$$
\end{lem}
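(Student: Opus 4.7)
The plan is to reduce the lemma to the standard linear-algebra identity $\Ran(M) = \Ker(M^T)^{\perp}$, applied to the Kalman matrix itself. Concretely, I would view the $n \times n^2$ Kalman matrix $M = [Q, BQ, \ldots, B^{n-1}Q]$ as a linear map $M : \mathbb{R}^{n^2} \to \mathbb{R}^n$. The Kalman rank condition \eqref{10052018E4} says precisely that $\Ran(M) = \mathbb{R}^n$, which by the range--kernel duality is equivalent to $\Ker(M^T) = \{0\}$.

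The second step is to describe $\Ker(M^T)$ explicitly. The transpose $M^T : \mathbb{R}^n \to \mathbb{R}^{n^2}$ sends $\xi \in \mathbb{R}^n$ to the concatenation of the blocks $Q^T \xi, (BQ)^T \xi, \ldots, (B^{n-1}Q)^T \xi$. Using the symmetry of $Q$, one has $(B^j Q)^T = Q (B^T)^j$, so each block simplifies to $Q(B^T)^j \xi$. Hence $M^T \xi = 0$ if and only if $Q(B^T)^j \xi = 0$ for every $j \in \{0, \ldots, n-1\}$, that is, $\xi \in \bigcap_{j=0}^{n-1} \Ker(Q(B^T)^j)$. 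Combining this identification with the first step yields the equivalence claimed in the lemma.

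There is no genuine obstacle here: both steps are standard and amount to a direct translation between the rank condition and an intersection of kernels. The only mild care needed is to keep track of the dimensions of $M$ and $M^T$, and to use the symmetry of $Q$ to rewrite $(B^j Q)^T$ as $Q(B^T)^j$; no extra hypothesis (such as positive semidefiniteness of $Q$) is actually used in the argument beyond the symmetry.
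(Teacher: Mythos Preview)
Your proposal is correct and follows essentially the same route as the paper's proof: both use that $\Rank[B\ \vert\ Q]=n$ is equivalent to $\Ker([B\ \vert\ Q]^T)=\{0\}$ via range--kernel duality, and then identify this kernel with $\bigcap_{j=0}^{n-1}\Ker(Q(B^T)^j)$ using the symmetry of $Q$. Your additional remark that only the symmetry of $Q$ (not its positive semidefiniteness) is used is accurate.
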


\begin{proof} Using the notations of \eqref{10052018E4}, we have the following equivalences:
\begin{align*}
	\Rank\big[B\ \vert\ Q\big] = n & \Leftrightarrow \Ran\big[B\ \vert\ Q\big] = \mathbb R^n, \\[5pt]
	& \Leftrightarrow \Ker\Big(\big[B\ \vert\ Q\big]^T\Big) = \Big(\Ran\big[B\ \vert\ Q\big]\Big)^{\perp} = \{0\}, \\[5pt]
	& \Leftrightarrow \bigcap_{j=0}^{n-1}\Ker\big(Q(B^T)^j\big) = \{0\},
\end{align*}
where $\perp$ denotes the orthogonality with respect to the canonical Euclidean structure.
\end{proof}

\subsection{A lemma} The following lemma is the core of the multiplier method used in Section \ref{hypoelliptic} to prove Theorem \ref{18032020T1}.

\begin{lem}\label{24042020L1} For all unbounded operators $A_1,A_2$ on $L^2(\mathbb R^n)$, respectively formally selfadjoint and formally skew-selfadjoint, both stabilizing the Schwartz space $\mathscr S(\mathbb R^n)$, we have
$$\forall u\in\mathscr S(\mathbb R^n),\quad\Reelle\big\langle A_1u,A_2u\big\rangle_{L^2} = \frac12\big\langle[A_1,A_2]u,u\big\rangle_{L^2},$$
with $[A_1,A_2] = A_1A_2-A_2A_1$ the commutator of the operators $A_1$ and $A_2$, which is well-defined on $\mathscr S(\mathbb R^n)$.
\end{lem}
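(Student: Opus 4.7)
The plan is a short computation that simply exploits the definitions of formal (skew-)selfadjointness. Since both $A_1$ and $A_2$ stabilize $\mathscr S(\mathbb R^n)$, every expression below is well-defined on Schwartz functions, and the commutator $[A_1,A_2]u = A_1A_2u-A_2A_1u$ makes sense pointwise on $\mathscr S(\mathbb R^n)$.

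First, I would use the formal selfadjointness of $A_1$ to shift it onto the second factor:
$$\big\langle A_1u,A_2u\big\rangle_{L^2} = \big\langle u,A_1A_2u\big\rangle_{L^2}.$$
Next, starting again from $\langle A_1u,A_2u\rangle_{L^2}$, I would take the complex conjugate (which amounts to swapping the two arguments of the inner product) and then use the formal skew-selfadjointness of $A_2$ to move it to the right:
$$\overline{\big\langle A_1u,A_2u\big\rangle}_{L^2} = \big\langle A_2u,A_1u\big\rangle_{L^2} = -\big\langle u,A_2A_1u\big\rangle_{L^2}.$$
Adding these two equalities and dividing by two yields
$$2\,\Reelle\big\langle A_1u,A_2u\big\rangle_{L^2} = \big\langle u,(A_1A_2-A_2A_1)u\big\rangle_{L^2} = \big\langle u,[A_1,A_2]u\big\rangle_{L^2}.$$

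The last step is to check that the right-hand side equals $\langle [A_1,A_2]u,u\rangle_{L^2}$, i.e.\ that $\langle u,[A_1,A_2]u\rangle_{L^2}$ is real. This follows because $[A_1,A_2]$ is formally selfadjoint: a direct formal computation gives $(A_1A_2)^* = A_2^*A_1^* = -A_2A_1$ and $(A_2A_1)^* = A_1^*A_2^* = -A_1A_2$, so $[A_1,A_2]^* = -A_2A_1+A_1A_2 = [A_1,A_2]$. Therefore $\langle u,[A_1,A_2]u\rangle_{L^2} = \overline{\langle u,[A_1,A_2]u\rangle}_{L^2}$ is real and equal to $\langle [A_1,A_2]u,u\rangle_{L^2}$, which concludes the proof.

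There is no real obstacle here: the whole content of the lemma is the elementary polarization identity for a product of a selfadjoint and a skew-selfadjoint operator. The only mild care needed is to justify all manipulations on $\mathscr S(\mathbb R^n)$, which is guaranteed by the stabilization assumption stated in the lemma.
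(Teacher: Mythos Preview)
Your proof is correct and is essentially the same elementary computation as the paper's, just run in the opposite direction: the paper starts from $\langle[A_1,A_2]u,u\rangle_{L^2}$ and reduces it to $2\Reelle\langle A_1u,A_2u\rangle_{L^2}$, while you start from the real part and arrive at $\langle u,[A_1,A_2]u\rangle_{L^2}$, then add the (correct) observation that $[A_1,A_2]$ is formally selfadjoint to match the stated form. No substantive difference.
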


\begin{proof} The result stated in Lemma \ref{24042020L1} ensues from a straightforward computation, since we have that for all $u\in\mathscr S$,
\begin{align*}
	\big\langle[A_1,A_2]u,u\big\rangle_{L^2} & = \big\langle A_1A_2u,u\big\rangle_{L^2} - \big\langle A_2A_1u,u\big\rangle_{L^2}
	= \big\langle A_2u,A_1u\big\rangle_{L^2} + \big\langle A_1u,A_2u\big\rangle_{L^2} \\[5pt]
	& = \overline{\big\langle A_1u,A_2u\big\rangle}_{L^2} + \big\langle A_1u,A_2u\big\rangle_{L^2}
	= 2\Reelle\big\langle A_1u,A_2u\big\rangle_{L^2}.
\end{align*}
\end{proof}

\subsection{The regularity exponents.}\label{regexp} We consider $r\geq2$ a positive integer and $\lambda_0>0$, $0\le q_{r-2}\le q_{r-1}\le1$ and $s_{r-1}\geq s_{r-2}\geq0$ some non-negative real numbers. By setting $q_j = s_j = 0$ when $j\le r-3$, we define recursively the positive real numbers $\lambda_1,\ldots,\lambda_r>0$ by
\begin{equation}\label{08042020E3}
	\forall j\in\{0,\ldots,r-1\},\quad\lambda_{j+1}\bigg(\frac{1-q_j}{\lambda_j}+1\bigg) = 1+s_j.
\end{equation}
The aim of this subjection is to state some properties of the family $(\lambda_0,\ldots,\lambda_r)$. More precisely, we aim at giving a more explicit formulation of the different assumptions of Theorem \ref{18032020T1}. We begin by noticing from an easy induction that
$$\lambda_j = \frac{\lambda_0}{1+j\lambda_0},\quad 0\le j\le r-2,\quad \lambda_{r-1} = \frac{\lambda_0(1+s_{r-2})}{1-q_{r-2}+\lambda_0+\lambda_0(1-q_{r-2})(r-2)},$$
and
$$\lambda_r = \frac{\lambda_0(1+s_{r-2})(1+s_{r-1})}{(1-q_{r-2})(1-q_{r-1}) + \lambda_0(1+s_{r-2}) + \lambda_0(1-q_{r-1}) + \lambda_0(1-q_{r-2})(1-q_{r-1})(r-2)}.$$
The proofs of the following lemmas are straightforward calculus that are omitted here.

\begin{lem}\label{10042019L1} The following equivalences
$$\lambda_{r-2}<\lambda_{r-1} \Leftrightarrow (s_{r-2} + q_{r-2})(1+\lambda_0(r-2))<\lambda_0,$$
and
\begin{align*}
	\lambda_{r-1}<\lambda_r & \Leftrightarrow (s_{r-1}+q_{r-1})(1-q_{r-2})(1+\lambda_0(r-2))<\lambda_0(1+s_{r-2}-s_{r-1}-q_{r-1}) \\[5pt]
	& \Leftrightarrow\lambda_r>q_{r-1}+s_{r-1}\geq q_{r-2}+s_{r-2},
\end{align*}
hold.
\end{lem}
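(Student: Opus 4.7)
The approach is purely computational, using the closed-form expressions for $\lambda_{r-2}$, $\lambda_{r-1}$, $\lambda_r$ recalled at the start of this subsection. First I would rewrite the recurrence \eqref{08042020E3} as $\lambda_{j+1} = (1+s_j)\lambda_j/(1-q_j+\lambda_j)$ and derive from it the single identity
$$\lambda_{j+1}-\lambda_j = \frac{\lambda_j\bigl[(q_j+s_j)-\lambda_j\bigr]}{1-q_j+\lambda_j},\qquad 0\le j\le r-1.$$
Since $\lambda_j>0$ and $1-q_j+\lambda_j>0$, the sign of $\lambda_{j+1}-\lambda_j$ matches that of $(q_j+s_j)-\lambda_j$. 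This already reduces both comparisons ($\lambda_{r-2}$ vs $\lambda_{r-1}$, and $\lambda_{r-1}$ vs $\lambda_r$) to inequalities of the simple shape $\lambda_j$ versus $q_j+s_j$.

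For the first equivalence I apply the observation with $j=r-2$ and substitute $\lambda_{r-2}=\lambda_0/(1+(r-2)\lambda_0)$; clearing the positive denominator $1+(r-2)\lambda_0$ produces the stated comparison between $(s_{r-2}+q_{r-2})(1+\lambda_0(r-2))$ and $\lambda_0$. For the top line of the second equivalence I apply it with $j=r-1$, so that the comparison of $\lambda_{r-1}$ and $\lambda_r$ becomes the comparison of $\lambda_{r-1}$ and $q_{r-1}+s_{r-1}$. Substituting the closed form of $\lambda_{r-1}$ and cross-multiplying its positive denominator $(1-q_{r-2})(1+\lambda_0(r-2))+\lambda_0$, I would then regroup terms to isolate the factor $(s_{r-1}+q_{r-1})(1-q_{r-2})(1+\lambda_0(r-2))$ against $\lambda_0(1+s_{r-2}-s_{r-1}-q_{r-1})$, which is the stated polynomial form.

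For the bottom line, starting from $\lambda_r(1-q_{r-1}+\lambda_{r-1})=(1+s_{r-1})\lambda_{r-1}$ a direct computation yields
$$\lambda_r-(q_{r-1}+s_{r-1})=\frac{(1-q_{r-1})\bigl[\lambda_{r-1}-(q_{r-1}+s_{r-1})\bigr]}{1-q_{r-1}+\lambda_{r-1}}.$$
Since $0\le q_{r-1}\le 1$ and the denominator is positive, the inequality $\lambda_r>q_{r-1}+s_{r-1}$ is equivalent to $\lambda_{r-1}>q_{r-1}+s_{r-1}$ whenever $q_{r-1}<1$, which is precisely the condition identified one paragraph above (the degenerate case $q_{r-1}=1$ forces equality $\lambda_r=q_{r-1}+s_{r-1}$ and does not play against the strict inequalities at stake). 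The final chain $q_{r-1}+s_{r-1}\ge q_{r-2}+s_{r-2}$ is then immediate from the standing hypotheses $q_{r-2}\le q_{r-1}$ and $s_{r-2}\le s_{r-1}$. No genuine mathematical obstacle arises in this lemma; the only subtlety to watch is the sign of the factors $1-q_j$ when cross-multiplying, which explains why $q_{r-1}=1$ has to be treated as a degenerate edge case.
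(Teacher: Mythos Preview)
Your computational approach is correct and is exactly the kind of direct verification the paper has in mind: the paper itself omits the proof entirely, stating only that ``the proofs of the following lemmas are straightforward calculus that are omitted here.'' Your key identity
\[
\lambda_{j+1}-\lambda_j=\frac{\lambda_j\bigl[(q_j+s_j)-\lambda_j\bigr]}{1-q_j+\lambda_j}
\]
is the natural way to carry out those computations, and the subsequent substitutions of the closed forms for $\lambda_{r-2}$ and $\lambda_{r-1}$ are routine. Your handling of the degenerate case $q_{r-1}=1$ is a nice touch that the paper does not comment on.

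One remark worth recording: as the lemma is literally written in the paper, the inequality signs on the left-hand sides ($\lambda_{r-2}<\lambda_{r-1}$ and $\lambda_{r-1}<\lambda_r$) are reversed relative to what your identities actually give. Your computation shows $\lambda_{j+1}>\lambda_j\Leftrightarrow q_j+s_j>\lambda_j$, so the stated equivalences hold with the \emph{opposite} inequalities throughout (i.e.\ with $\lambda_{r-2}>\lambda_{r-1}$ and $\lambda_{r-1}>\lambda_r$), which is also the direction the paper actually uses elsewhere (the family $(\lambda_0,\dots,\lambda_r)$ is assumed decreasing, and the conclusion $\lambda_r>q_{r-1}+s_{r-1}$ is invoked in Section~\ref{hypoelliptic}). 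This is a typo in the statement rather than a flaw in your argument.
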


\begin{lem} The following estimate holds
$$\frac{\lambda_{r-1}}{\lambda_{r-2}}+\frac{\lambda_{r-1}}{\lambda_r}\le 2,$$
if and only if
\begin{multline*}
	((1+s_{r-2})(1+s_{r-1})+(1-q_{r-2})(1-q_{r-1})-2(1+s_{r-1})(1-q_{r-2}))(1+\lambda_0(r-2)) \\[5pt]
	\le\lambda_0(2s_{r-1}-s_{r-2}+q_{r-1}).
\end{multline*}
\end{lem}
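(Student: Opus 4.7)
The proof is a direct algebraic manipulation based on the closed-form expressions for $\lambda_{r-2}$, $\lambda_{r-1}$, and $\lambda_r$ recorded in Subsection \ref{regexp}. The plan is to compute the two ratios $\lambda_{r-1}/\lambda_{r-2}$ and $\lambda_{r-1}/\lambda_r$, sum them, put the result over a common denominator, and observe that the denominator is positive, so the inequality $\le 2$ becomes a polynomial inequality which rearranges into the claim. There is no conceptual subtlety; the only thing to manage carefully is the bookkeeping of four parameters.

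To keep the computation readable, I will introduce the abbreviations $A=1-q_{r-2}$, $B=1-q_{r-1}$, $C=1+s_{r-2}$, $D=1+s_{r-1}$ and $E=1+(r-2)\lambda_0$ (all non-negative, and $E\ge 1$ since $r\ge 2$). Using the formulas
$$\lambda_{r-2}=\frac{\lambda_0}{E},\quad \lambda_{r-1}=\frac{\lambda_0 C}{AE+\lambda_0},\quad \lambda_r=\frac{\lambda_0 CD}{ABE+\lambda_0(B+C)},$$
a direct calculation yields
$$\frac{\lambda_{r-1}}{\lambda_{r-2}}=\frac{CE}{AE+\lambda_0},\qquad \frac{\lambda_{r-1}}{\lambda_r}=\frac{ABE+\lambda_0(B+C)}{D(AE+\lambda_0)}.$$
Putting these two fractions over the common (positive) denominator $D(AE+\lambda_0)$, the inequality $\lambda_{r-1}/\lambda_{r-2}+\lambda_{r-1}/\lambda_r\le 2$ is equivalent to
$$CDE+ABE+\lambda_0(B+C)\le 2D(AE+\lambda_0),$$
which rearranges as
$$E\bigl(CD+AB-2AD\bigr)\le \lambda_0\bigl(2D-B-C\bigr).$$

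It remains to expand the abbreviations. On the left, $CD+AB-2AD=(1+s_{r-2})(1+s_{r-1})+(1-q_{r-2})(1-q_{r-1})-2(1+s_{r-1})(1-q_{r-2})$ and $E=1+(r-2)\lambda_0$, so the left side matches the first line of the claimed inequality. On the right, a one-line computation gives $2D-B-C=2(1+s_{r-1})-(1-q_{r-1})-(1+s_{r-2})=2s_{r-1}-s_{r-2}+q_{r-1}$, which matches the second line. This completes the equivalence.

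The only potential pitfall is that we cleared the denominator $D(AE+\lambda_0)$ without comment, so one should remark at the outset that this quantity is strictly positive (indeed $D=1+s_{r-1}\ge 1$, $A=1-q_{r-2}\in[0,1]$, $E\ge 1$, and $\lambda_0>0$). With that observation, the derivation above is an equivalence rather than just an implication, and the lemma follows. No hard step is expected; the main care is in tracking signs so that the final inequality has the orientation stated.
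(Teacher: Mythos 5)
Your computation is correct: the abbreviations $A,B,C,D,E$ reduce the closed-form expressions for $\lambda_{r-2},\lambda_{r-1},\lambda_r$ to exactly the stated ratios, the cleared denominator $D(AE+\lambda_0)\ge\lambda_0>0$ is indeed positive, and the rearrangement reproduces the claimed inequality in both directions. This is precisely the ``straightforward calculus'' the paper declares and omits, based on the same explicit formulas from Subsection \ref{regexp}, so nothing further is needed.
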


\subsection{Technical results} In this subsection, we give the proofs of some technical results used in Section \ref{constructionKalman} to prove Proposition \ref{05122018P1}. Let $B$ and $Q$ be real $n\times n$ matrices, with $Q$ symmetric positive semidefinite. We consider $r\geq1$ a positive integer and $\lambda_0>0$, $0\le q_{r-2}\le q_{r-1}\le1$ and $s_{r-1}\geq s_{r-2}\geq0$ some non-negative real numbers. By setting $q_j = s_j = 0$ when $j\le r-3$, we define recursively the positive real numbers $\lambda_1,\ldots,\lambda_r>0$ by \eqref{08042020E3}. Let us assume that the family $(\lambda_0,\ldots,\lambda_r)$ is decreasing and satisfies \eqref{26062020E2}.

\begin{lem}\label{29032019L1} Let $\chi\in C^{\infty}(\mathbb R)$ be a smooth function such that $\chi'$ is compactly supported with $\max\Supp\chi'>0$. There exists a positive constant $c>0$ such that for all $\xi\in\mathbb R^n$,
$$\bigg\vert B^T\xi\cdot\nabla_{\xi}\bigg(\chi\bigg(\frac{\vert Q(B^T)^{r-1}\xi\vert^2}{\langle\xi\rangle^{2\lambda_r/\lambda_{r-1}}}\bigg)\bigg)\bigg\vert
\le c\langle\xi\rangle^{1-\lambda_r/\lambda_{r-1}}\bigg\vert\chi'\bigg(\frac{\vert Q(B^T)^{r-1}\xi\vert^2}{\langle\xi\rangle^{2\lambda_r/\lambda_{r-1}}}\bigg)\bigg\vert.$$
\end{lem}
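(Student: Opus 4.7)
The plan is to apply the chain rule, recording that
$$B^T\xi\cdot\nabla_{\xi}\bigg(\chi\bigg(\frac{\vert Q(B^T)^{r-1}\xi\vert^2}{\langle\xi\rangle^{2\lambda_r/\lambda_{r-1}}}\bigg)\bigg) = \chi'\bigg(\frac{\vert Q(B^T)^{r-1}\xi\vert^2}{\langle\xi\rangle^{2\lambda_r/\lambda_{r-1}}}\bigg)\, B^T\xi\cdot\nabla_{\xi}f(\xi),$$
where $f(\xi) = \vert Q(B^T)^{r-1}\xi\vert^2 \langle\xi\rangle^{-2\lambda_r/\lambda_{r-1}}$. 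The task then reduces to bounding $|B^T\xi\cdot\nabla_{\xi}f(\xi)|$ by $\langle\xi\rangle^{1-\lambda_r/\lambda_{r-1}}$, uniformly in $\xi$, on the support of $\chi'(f(\xi))$.

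First I would use Leibniz' formula to split $B^T\xi\cdot\nabla_{\xi}f(\xi)$ into
$$\frac{B^T\xi\cdot\nabla_{\xi}\vert Q(B^T)^{r-1}\xi\vert^2}{\langle\xi\rangle^{2\lambda_r/\lambda_{r-1}}} + \vert Q(B^T)^{r-1}\xi\vert^2\, B^T\xi\cdot\nabla_{\xi}\langle\xi\rangle^{-2\lambda_r/\lambda_{r-1}}.$$
A direct computation gives $B^T\xi\cdot\nabla_{\xi}\vert Q(B^T)^{r-1}\xi\vert^2 = 2\, Q(B^T)^{r-1}\xi\cdot Q(B^T)^r\xi$, so by Cauchy--Schwarz and the obvious bound $|Q(B^T)^r\xi|\lesssim\langle\xi\rangle$, the first piece is controlled by $|Q(B^T)^{r-1}\xi|\langle\xi\rangle\langle\xi\rangle^{-2\lambda_r/\lambda_{r-1}}$. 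Likewise, $B^T\xi\cdot\nabla_{\xi}\langle\xi\rangle^{-2\lambda_r/\lambda_{r-1}} = -(2\lambda_r/\lambda_{r-1})\langle\xi\rangle^{-2\lambda_r/\lambda_{r-1}-2}(B^T\xi\cdot\xi)$ is bounded in absolute value by $\langle\xi\rangle^{-2\lambda_r/\lambda_{r-1}}$ up to a constant, so the second piece is controlled by $|Q(B^T)^{r-1}\xi|^2\langle\xi\rangle^{-2\lambda_r/\lambda_{r-1}}$.

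The key observation, which I would use next, is that on the support of $\chi'(f(\xi))$ one has $f(\xi)\lesssim 1$ by compactness of $\Supp\chi'$, hence
$$\vert Q(B^T)^{r-1}\xi\vert\lesssim\langle\xi\rangle^{\lambda_r/\lambda_{r-1}}.$$
Substituting this bound into the two estimates above, the first piece is dominated by $\langle\xi\rangle^{\lambda_r/\lambda_{r-1}+1-2\lambda_r/\lambda_{r-1}} = \langle\xi\rangle^{1-\lambda_r/\lambda_{r-1}}$, and the second piece by $\langle\xi\rangle^{2\lambda_r/\lambda_{r-1}-2\lambda_r/\lambda_{r-1}} = 1$. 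Since the family $(\lambda_0,\ldots,\lambda_r)$ is assumed to be decreasing, $\lambda_r<\lambda_{r-1}$ and thus $1-\lambda_r/\lambda_{r-1}>0$, so $1\le\langle\xi\rangle^{1-\lambda_r/\lambda_{r-1}}$ and both pieces satisfy the claimed bound. Reinstating the factor $\chi'(f(\xi))$ in front concludes the proof.

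There is no real obstacle here; the argument is just a careful bookkeeping of the chain rule combined with the support localization coming from $\chi'$. The only small point requiring attention is to ensure the exponent $1-\lambda_r/\lambda_{r-1}$ is non-negative so that the constant term arising from the second piece can be absorbed, which is precisely guaranteed by the standing assumption that $(\lambda_0,\ldots,\lambda_r)$ is decreasing.
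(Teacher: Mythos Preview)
Your proof is correct and follows essentially the same approach as the paper's: apply the chain rule, split via Leibniz' formula into the two terms coming from differentiating $\vert Q(B^T)^{r-1}\xi\vert^2$ and $\langle\xi\rangle^{-2\lambda_r/\lambda_{r-1}}$, use Cauchy--Schwarz together with the support localization $\vert Q(B^T)^{r-1}\xi\vert\lesssim\langle\xi\rangle^{\lambda_r/\lambda_{r-1}}$ on $\Supp\chi'(f(\cdot))$, and finally absorb the constant term into $\langle\xi\rangle^{1-\lambda_r/\lambda_{r-1}}$ using $\lambda_r<\lambda_{r-1}$. The paper's argument is identical in structure and in the individual estimates.
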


\begin{proof} It follows from a direct computation that for all $\xi\in\mathbb R^n$,
\begin{multline}\label{23072019E3}
	B^T\xi\cdot\nabla_{\xi}\bigg(\chi\bigg(\frac{\vert Q(B^T)^{r-1}\xi\vert^2}{\langle\xi\rangle^{2\lambda_r/\lambda_{r-1}}}\bigg)\bigg)
	= \frac{B^T\xi\cdot\nabla_{\xi}\vert Q(B^T)^{r-1}\xi\vert^2}{\langle\xi\rangle^{2\lambda_r/\lambda_{r-1}}}\chi'\bigg(\frac{\vert Q(B^T)^{r-1}\xi\vert^2}{\langle\xi\rangle^{2\lambda_r/\lambda_{r-1}}}\bigg) \\
	+ \vert Q(B^T)^{r-1}\xi\vert^2\big(B^T\xi\cdot\nabla_{\xi}\langle\xi\rangle^{-2\lambda_r/\lambda_{r-1}}\big)\chi'\bigg(\frac{\vert Q(B^T)^{r-1}\xi\vert^2}{\langle\xi\rangle^{2\lambda_r/\lambda_{r-1}}}\bigg).
\end{multline}
We need to compute and estimate the two terms in the above equality. On the one hand, we get that for all $\xi\in\mathbb R^n$,
\begin{align*}
	\frac{B^T\xi\cdot\nabla_{\xi}\vert Q(B^T)^{r-1}\xi\vert^2}{\langle\xi\rangle^{2\lambda_r/\lambda_{r-1}}}\chi'\bigg(\frac{\vert Q(B^T)^{r-1}\xi\vert^2}{\langle\xi\rangle^{2\lambda_r/\lambda_{r-1}}}\bigg)
	& = \frac{B^T\xi\cdot2B^{r-1}Q^2(B^T)^{r-1}\xi}{\langle\xi\rangle^{2\lambda_r/\lambda_{r-1}}}\chi'\bigg(\frac{\vert Q(B^T)^{r-1}\xi\vert^2}{\langle\xi\rangle^{2\lambda_r/\lambda_{r-1}}}\bigg) \\[5pt]
	& = \frac{2Q(B^T)^r\xi\cdot Q(B^T)^{r-1}\xi}{\langle\xi\rangle^{2\lambda_r/\lambda_{r-1}}}\chi'\bigg(\frac{\vert Q(B^T)^{r-1}\xi\vert^2}{\langle\xi\rangle^{2\lambda_r/\lambda_{r-1}}}\bigg).
\end{align*}
Cauchy Schwarz' inequality and the fact that the function $\chi$ is compactly supported with $\max\Supp\chi'>0$ therefore imply that for all $\xi\in\mathbb R^n$,
\begin{align}\label{23072019E1}
	&\ \bigg\vert\frac{B^T\xi\cdot\nabla_{\xi}\vert Q(B^T)^{r-1}\xi\vert^2}{\langle\xi\rangle^{2\lambda_r/\lambda_{r-1}}}\chi'\bigg(\frac{\vert Q(B^T)^{r-1}\xi\vert^2}{\langle\xi\rangle^{2\lambda_r/\lambda_{r-1}}}\bigg)\bigg\vert \\[5pt]
	\le &\ \frac{2\vert Q(B^T)^r\xi\vert\vert Q(B^T)^{r-1}\xi\vert}{\langle\xi\rangle^{2\lambda_r/\lambda_{r-1}}}\bigg\vert\chi'\bigg(\frac{\vert Q(B^T)^{r-1}\xi\vert^2}{\langle\xi\rangle^{2\lambda_r/\lambda_{r-1}}}\bigg)\bigg\vert, \nonumber \\[5pt]
	\lesssim &\ \frac{\langle\xi\rangle^{1+\lambda_r/\lambda_{r-1}}}{\langle\xi\rangle^{2\lambda_r/\lambda_{r-1}}}\bigg\vert\chi'\bigg(\frac{\vert Q(B^T)^{r-1}\xi\vert^2}{\langle\xi\rangle^{2\lambda_r/\lambda_{r-1}}}\bigg)\bigg\vert
	= \langle\xi\rangle^{1-\lambda_r/\lambda_{r-1}}\bigg\vert\chi'\bigg(\frac{\vert Q(B^T)^{r-1}\xi\vert^2}{\langle\xi\rangle^{2\lambda_r/\lambda_{r-1}}}\bigg)\bigg\vert. \nonumber
\end{align}
On the other hand, we compute the derivative of the following Japanese bracket for all $\xi\in\mathbb R^n$,
$$B^T\xi\cdot\nabla_{\xi}\langle\xi\rangle^{-2\lambda_r/\lambda_{r-1}} = \frac{-2\lambda_r}{\lambda_{r-1}}\langle\xi\rangle^{-2\lambda_r/\lambda_{r-1}-2}
B^T\xi\cdot\xi.$$
Exploiting the same ingredients as the ones used to obtain \eqref{23072019E1}, we get that for all $\xi\in\mathbb R^n$,
\begin{align}\label{23072019E2}
	&\ \bigg\vert\vert Q(B^T)^{r-1}\xi\vert^2\big(B^T\xi\cdot\nabla_{\xi}\langle\xi\rangle^{-2\lambda_r/\lambda_{r-1}}\big)\chi'\bigg(\frac{\vert Q(B^T)^{r-1}\xi\vert^2}{\langle\xi\rangle^{2\lambda_r/\lambda_{r-1}}}\bigg)\bigg\vert \\[5pt]
	= &\ \bigg\vert\frac{-2\lambda_r}{\lambda_{r-1}}\frac{\vert Q(B^T)^{r-1}\xi\vert^2B^T\xi\cdot\xi}{\langle\xi\rangle^{2\lambda_r/\lambda_{r-1}+2}}\chi'\bigg(\frac{\vert Q(B^T)^{r-1}\xi\vert^2}{\langle\xi\rangle^{2\lambda_r/\lambda_{r-1}}}\bigg)\bigg\vert \nonumber \\[5pt]
	\lesssim &\ \frac{\langle\xi\rangle^{2\lambda_r/\lambda_{r-1}}\langle\xi\rangle^2}{\langle\xi\rangle^{2\lambda_r/\lambda_{r-1}+2}}\bigg\vert\chi'\bigg(\frac{\vert Q(B^T)^{r-1}\xi\vert^2}{\langle\xi\rangle^{2\lambda_r/\lambda_{r-1}}}\bigg)\bigg\vert = \bigg\vert\chi'\bigg(\frac{\vert Q(B^T)^{r-1}\xi\vert^2}{\langle\xi\rangle^{2\lambda_r/\lambda_{r-1}}}\bigg)\bigg\vert. \nonumber
\end{align}
Gathering \eqref{23072019E3} and the estimates \eqref{23072019E1} and \eqref{23072019E2}, we deduce that for all $\xi\in\mathbb R^n$,
\begin{align*}
	\bigg\vert B^T\xi\cdot\nabla_{\xi}\bigg(\chi\bigg(\frac{\vert Q(B^T)^{r-1}\xi\vert^2}{\langle\xi\rangle^{2\lambda_r/\lambda_{r-1}}}\bigg)\bigg)\bigg\vert
	& \lesssim\big(1+\langle\xi\rangle^{1-\lambda_r/\lambda_{r-1}}\big)\bigg\vert\chi'\bigg(\frac{\vert Q(B^T)^{r-1}\xi\vert^2}{\langle\xi\rangle^{2\lambda_r/\lambda_{r-1}}}\bigg)\bigg\vert \\[5pt]
	& \lesssim\langle\xi\rangle^{1-\lambda_r/\lambda_{r-1}}\bigg\vert\chi'\bigg(\frac{\vert Q(B^T)^{r-1}\xi\vert^2}{\langle\xi\rangle^{2\lambda_r/\lambda_{r-1}}}\bigg)\bigg\vert,
\end{align*}
since $\lambda_{r-1}>\lambda_r$ by assumption. This ends the proof of Lemma \ref{29032019L1}.
\end{proof}

\begin{lem}\label{25072019L1} Let us assume that $r\geq2$. We consider the functions $W_k$, $\Psi_k$ and $\mathcal W_k$ defined in \eqref{25032019E6}, \eqref{25032019E9}, \eqref{27032019E6} and \eqref{24072019E18}. Then, for all $1\le k\le r-1$, the two following estimates hold for all $\xi\in\mathbb R^n$,
$$\big\vert\mathcal W_k(\xi)B^T\xi\cdot\nabla_{\xi}\Psi_{k+1}(\xi)\big\vert\lesssim_{\ \Gamma_k,\Gamma_{k+1}}\big\vert Q(B^T)^{r-k-1}\xi\big\vert^{\lambda_{r-k-1}}\mathcal W_{k+1}(\xi),$$
and
$$\big\vert\mathcal W_k(\xi)B^T\xi\cdot\nabla_{\xi}W_{k+1}(\xi)\big\vert\lesssim_{\ \Gamma_k,\Gamma_{k+1}}\big\vert Q(B^T)^{r-k-1}\xi\big\vert^{\lambda_{r-k-1}}\mathcal W_k(\xi)\Psi_{k+1}(\xi),$$
with a dependence of the constants only in $\Gamma_2$ in the case where $k=1$ by convention.
\end{lem}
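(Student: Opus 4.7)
The plan is to treat both estimates in parallel, since $\Psi_{k+1}$ and $W_{k+1}$ are both obtained by composing a bounded, smooth, scalar cut-off with the same rational function
$$F_{k+1}(\xi) = \frac{\Gamma_{k+1}^2\vert Q(B^T)^{r-k-1}\xi\vert^2}{\vert Q(B^T)^{r-k}\xi\vert^{2\lambda_{r-k}/\lambda_{r-k-1}}},$$
which is well-defined on $\Supp\mathcal W_k$ thanks to the chain of estimates \eqref{08042020E6}. By the chain rule, $B^T\xi\cdot\nabla_\xi\Psi_{k+1}(\xi)=\psi'(F_{k+1}(\xi))\,B^T\xi\cdot\nabla_\xi F_{k+1}(\xi)$, and similarly for $W_{k+1}$ with $w'$ in place of $\psi'$. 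Since $\psi'$ and $w'$ are compactly supported with supports contained in a common transition region $\{c_2\le\vert x\vert\le c_1\}$ (by construction of $\psi$ and $w$ with possibly adjusted numerical constants), it will be enough to bound $|B^T\xi\cdot\nabla_\xi F_{k+1}(\xi)|$ on the support of $\mathcal W_k$ intersected with the support of $\psi'(F_{k+1})$ or $w'(F_{k+1})$.

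Next I compute $B^T\xi\cdot\nabla_\xi F_{k+1}$ by Leibniz' rule applied to the quotient. The numerator derivative gives
$$B^T\xi\cdot\nabla_\xi\vert Q(B^T)^{r-k-1}\xi\vert^2=2\,Q(B^T)^{r-k}\xi\cdot Q(B^T)^{r-k-1}\xi,$$
and Cauchy-Schwarz' inequality bounds it by $2\vert Q(B^T)^{r-k}\xi\vert\vert Q(B^T)^{r-k-1}\xi\vert$. The denominator derivative is analogous, producing a factor $\vert Q(B^T)^{r-k+1}\xi\vert\vert Q(B^T)^{r-k}\xi\vert$ after Cauchy-Schwarz. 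Gathering the two contributions and using the size relation $\vert Q(B^T)^{r-k-1}\xi\vert^{\lambda_{r-k-1}}\asymp\vert Q(B^T)^{r-k}\xi\vert^{\lambda_{r-k}}/\Gamma_{k+1}^{\lambda_{r-k-1}}$, which holds on $\Supp\psi'(F_{k+1})\cup\Supp w'(F_{k+1})$ by compactness of support of $\psi'$ and $w'$, together with the estimate $\vert Q(B^T)^{r-k+1}\xi\vert^{\lambda_{r-k+1}}\lesssim\Gamma_k^{\lambda_{r-k}}\vert Q(B^T)^{r-k}\xi\vert^{\lambda_{r-k}}$ valid on $\Supp\mathcal W_k$ from \eqref{08042020E4} applied at index $j=k$, and finally \eqref{26062020E2} to tame the mixed exponents, I expect each of the two Leibniz terms to be controlled by $C_{\Gamma_k,\Gamma_{k+1}}\vert Q(B^T)^{r-k-1}\xi\vert^{\lambda_{r-k-1}}$ times the corresponding indicator $|\psi'(F_{k+1})|$ or $|w'(F_{k+1})|$.

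It remains to convert these indicator factors into the cut-offs appearing on the right-hand side of the lemma. This is where the precise choice of $\psi$ and $w$ matters. Since $\Supp\psi'\cup\Supp w'\subset\{c_2\le|x|\le c_1\}$, one may pick the two cut-offs so that $|\psi'|\lesssim w$ everywhere (indeed $w=1$ on a neighborhood of $\Supp\psi'$ once the inner plateau of $w$ is taken large enough), and symmetrically $|w'|\lesssim\psi$. Then
$$\mathcal W_k(\xi)\,|\psi'(F_{k+1}(\xi))|\lesssim\mathcal W_k(\xi)\,W_{k+1}(\xi)=\mathcal W_{k+1}(\xi),$$
$$\mathcal W_k(\xi)\,|w'(F_{k+1}(\xi))|\lesssim\mathcal W_k(\xi)\,\Psi_{k+1}(\xi),$$
which fits the two target estimates. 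When $k=1$, no $\Gamma_k$ appears in the preceding step, so the constant only depends on $\Gamma_2$, matching the announced convention.

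The main obstacle is bookkeeping the various exponents of $|Q(B^T)^{r-k}\xi|$ coming from the quotient rule and verifying that after using \eqref{22042020E1}, the support localization of $\psi'(F_{k+1})$ or $w'(F_{k+1})$, and the Kalman-type control \eqref{08042020E4}, all the surplus weights collapse into the single factor $\vert Q(B^T)^{r-k-1}\xi\vert^{\lambda_{r-k-1}}$; the assumption \eqref{26062020E2} is precisely what guarantees that no uncontrolled weight $\vert Q(B^T)^{r-k+1}\xi\vert$ survives. The rest of the argument is a mechanical adaptation of the one already carried out in Lemma \ref{29032019L1} for the single-factor case $k=1$.
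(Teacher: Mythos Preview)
Your proposal is correct and follows essentially the same route as the paper's proof: decompose $B^T\xi\cdot\nabla_\xi F_{k+1}$ via the quotient rule into the numerator-derivative term and the denominator-derivative term, apply Cauchy--Schwarz to each, use the support localization of $\psi'(F_{k+1})$ (resp.\ $w'(F_{k+1})$) to compare $\vert Q(B^T)^{r-k-1}\xi\vert$ with $\vert Q(B^T)^{r-k}\xi\vert^{\lambda_{r-k}/\lambda_{r-k-1}}$, invoke \eqref{08042020E4} to absorb the factor $\vert Q(B^T)^{r-k+1}\xi\vert$ coming from the denominator term, and then close the exponent bookkeeping using the recurrence \eqref{22042020E1} for the first term and the structural assumption \eqref{26062020E2} for the second; finally the relations $\vert\psi'\vert\lesssim w$ and $\vert w'\vert\lesssim\psi$ convert the derivative indicators into $\mathcal W_{k+1}$ and $\mathcal W_k\Psi_{k+1}$ respectively. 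The paper carries out exactly these steps, writing the two Leibniz terms as $A_{1,k}$ and $A_{2,k}$ and verifying the exponent inequalities $1-\lambda_{r-k}/\lambda_{r-k-1}\le\lambda_{r-k}$ and $\lambda_{r-k}/\lambda_{r-k+1}-1\le\lambda_{r-k}$ explicitly.
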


\begin{proof} Let $1\le k\le r-1$ fixed. We first focus on proving that for all $\xi\in\mathbb R^n$,
\begin{equation}\label{25072019E12}
	\big\vert\mathcal W_k(\xi)B^T\xi\cdot\nabla_{\xi}\Psi_{k+1}(\xi)\big\vert\lesssim_{\ \Gamma_k,\Gamma_{k+1}}\big\vert Q(B^T)^{r-k-1}\xi\big\vert^{\lambda_{r-k-1}}\mathcal W_{k+1}(\xi).
\end{equation}
Let us recall from \eqref{27032019E6} that the symbol $\Psi_{k+1}(\xi)$ is defined on $\Supp \mathcal W_k$ by
$$\Psi_{k+1}(\xi) = \psi\bigg(\frac{\Gamma^2_{k+1}\vert Q(B^T)^{r-k-1}\xi\vert^2}{\vert Q(B^T)^{r-k}\xi\vert^{2\lambda_{r-k}/\lambda_{r-k-1}}}\bigg).$$ 
A direct calculus shows that for all $\xi\in\Supp(\mathcal W_k\Psi_{k+1})$,
$$B^T\xi\cdot\nabla_{\xi}\vert Q(B^T)^{r-k-1}\xi\vert^2 = 2Q(B^T)^{r-k}\cdot Q(B^T)^{r-k-1}\xi,$$
and
$$B^T\xi\cdot\nabla_{\xi}\vert Q(B^T)^{r-k}\xi\vert^{-2\lambda_{r-k}/\lambda_{r-k-1}} = \frac{-2\lambda_{r-k}}{\lambda_{r-k-1}}\frac{Q(B^T)^{r-k+1}\xi\cdot Q(B^T)^{r-k}\xi}{\vert Q(B^T)^{r-k}\xi\vert^{2\lambda_{r-k}/\lambda_{r-k-1}+2}}.$$
We therefore deduce that for all $\xi\in\mathbb R^n$,
$$\mathcal W_k(\xi)B^T\xi\cdot\nabla_{\xi}\Psi_{k+1}(\xi) = A_{1,k}(\xi) + A_{2,k}(\xi),$$ 
where the two terms $A_{1,k}(\xi)$ and $A_{2,k}(\xi)$ are respectively defined by
$$A_{1,k}(\xi) = \frac{\Gamma^2_{k+1}Q(B^T)^{r-k}\xi\cdot Q(B^T)^{r-k-1}\xi}{\vert Q(B^T)^{r-k}\xi\vert^{2\lambda_{r-k}/\lambda_{r-k-1}}}\ \psi'\bigg(\frac{\Gamma^2_{k+1}\vert Q(B^T)^{r-k-1}\xi\vert^2}{\vert Q(B^T)^{r-k}\xi\vert^{2\lambda_{r-k}/\lambda_{r-k-1}}}\bigg)\mathcal W_k(\xi),$$
and 
\begin{multline*}
	A_{2,k}(\xi) = \frac{-2\lambda_{r-k}}{\lambda_{r-k-1}}\frac{\Gamma^2_{k+1}\vert Q(B^T)^{r-k-1}\xi\vert^2Q(B^T)^{r-k+1}\xi\cdot Q(B^T)^{r-k}\xi}{\vert Q(B^T)^{r-k}\xi\vert^{2\lambda_{r-k}/\lambda_{r-k-1}+2}} \\[5pt]
	\times\psi'\bigg(\frac{\Gamma^2_{k+1}\vert Q(B^T)^{r-k-1}\xi\vert^2}{\vert Q(B^T)^{r-k}\xi\vert^{2\lambda_{r-k}/\lambda_{r-k-1}}}\bigg)\mathcal W_k(\xi).
\end{multline*}
We now aim at controlling $A_{1,k}(\xi)$ and $A_{2,k}(\xi)$. Before that, notice from the definitions \eqref{23072019E11} and \eqref{24072019E4} of the functions $w$ and $\psi$ that the inequality $\vert\psi'\vert\lesssim w$ holds, which implies that for all $\xi\in\mathbb R^n$,
\begin{multline*}
	\bigg\vert\psi'\bigg(\frac{\Gamma^2_{k+1}\vert Q(B^T)^{r-k-1}\xi\vert^2}{\vert Q(B^T)^{r-k}\xi\vert^{2\lambda_{r-k}/\lambda_{r-k-1}}}\bigg)\bigg\vert\mathcal W_k(\xi) \\[5pt]
	\lesssim w\bigg(\frac{\Gamma^2_{k+1}\vert Q(B^T)^{r-k-1}\xi\vert^2}{\vert Q(B^T)^{r-k}\xi\vert^{2\lambda_{r-k}/\lambda_{r-k-1}}}\bigg)\mathcal W_k(\xi)
	=W_{k+1}(\xi) \mathcal W_k(\xi) = \mathcal W_{k+1}(\xi).
\end{multline*}
In order to estimate $A_{1,k}(\xi)$, we use the definition \eqref{24072019E4} of the function $\psi$, the above estimate and Cauchy-Schwarz' inequality to obtain that for all $\xi\in\mathbb R^n$,
$$\big\vert A_{1,k}(\xi)\big\vert\lesssim_{\ \Gamma_{k+1}}\frac{\vert Q(B^T)^{r-k}\xi\vert^{1+\lambda_{r-k}/\lambda_{r-k-1}}}{\vert Q(B^T)^{r-k}\xi\vert^{2\lambda_{r-k}/\lambda_{r-k-1}}}\ \mathcal W_{k+1}(\xi).$$
Moreover, we deduce from \eqref{08042020E3} that 
$$1-\frac{\lambda_{r-k}}{\lambda_{r-k-1}} = \lambda_{r-k}-s_{r-k-1}-\frac{q_{r-k-1}\lambda_{r-k}}{\lambda_{r-k-1}}\le\lambda_{r-k},$$
As a consequence of this estimate and \eqref{08042020E4}, we get that for all $\xi\in\mathbb R^n$,
$$\big\vert A_{1,k}(\xi)\big\vert\lesssim_{\ \Gamma_{k+1}}\vert Q(B^T)^{r-k}\xi\vert^{\lambda_{r-k}}\ \mathcal W_{k+1}(\xi)\lesssim_{\ \Gamma_{k+1}}\vert Q(B^T)^{r-k-1}\xi\vert^{\lambda_{r-k-1}}\ \mathcal W_{k+1}(\xi).$$
The same ingredients, combined with the use of the estimate
$$\frac{\lambda_{r-k}}{\lambda_{r-k+1}}-1\le\lambda_{r-k},$$
which is a consequence of the assumption \eqref{26062020E2},
$$\frac{\lambda_{r-k}}{\lambda_{r-k+1}} + \frac{\lambda_{r-k}}{\lambda_{r-k-1}} = \frac{\lambda_{r-k}}{\lambda_{r-k+1}} + 1 - \lambda_{r-k} + s_{r-k-1} + \frac{q_{r-k-1}\lambda_{r-k}}{\lambda_{r-k-1}}\le2,$$
allow similarly to control $A_{2,k}(\xi)$ for all $\xi\in\mathbb R^n$ in the following way
\begin{align*}
	\big\vert A_{2,k}(\xi)\big\vert & \lesssim_{\ \Gamma_k,\Gamma_{k+1}}\frac{\vert Q(B^T)^{r-k}\xi\vert^{\lambda_{r-k}/\lambda_{r-k+1}}}{\vert Q(B^T)^{r-k}\xi\vert}\ \mathcal W_{k+1}(\xi) \\[5pt]
	& \lesssim_{\ \Gamma_k,\Gamma_{k+1}}\vert Q(B^T)^{r-k-1}\xi\vert^{\lambda_{r-k-1}}\ \mathcal W_{k+1}(\xi).
\end{align*}
The two above inequalities imply that the estimate \eqref{25072019E12} actually holds. Finally, the estimate 
$$\big\vert\mathcal W_k(\xi)B^T\xi\cdot\nabla_{\xi}W_{k+1}(\xi)\big\vert\lesssim_{\ \Gamma_k,\Gamma_{k+1}}\big\vert Q(B^T)^{r-k-1}\xi\big\vert^{\lambda_{r-k-1}}\mathcal W_k(\xi)\Psi_{k+1}(\xi),$$
valid for all $\xi\in\mathbb R^n$, can be obtained in the very same way, since the function $\psi'$ and $w'$ have the same supports and by using that the inequality $\vert w'\vert\lesssim\psi$ holds. This ends the proof of Lemma \ref{25072019L1}.
\end{proof}

\end{document}